\def\acts{\curvearrowright}
\DeclarePairedDelimiter\abs{\lvert}{\rvert}%
\DeclarePairedDelimiter\norm{\lVert}{\rVert}%
\let\oldabs\abs
\def\abs{\@ifstar{\oldabs}{\oldabs*}}
\let\oldnorm\norm
\def\norm{\@ifstar{\oldnorm}{\oldnorm*}}
\theoremstyle{definition}
\newtheorem{thm}{Theorem}
\newtheorem{lem}{Lemma}
\newtheorem{prop}{Proposition}
\newtheorem{defn}{Definition}
\newtheorem{example}{Example}
\newtheorem{remark}{Remark}
\newtheorem{question}{Question}
\DeclareMathOperator{\id}{id}
\DeclareMathOperator{\Aut}{Aut}
\DeclareMathOperator{\Isom}{Isom}
\DeclareMathOperator{\stab}{stab}
\DeclareMathOperator{\support}{support}
\DeclareMathOperator{\Sym}{Sym}
\DeclareMathOperator{\Cay}{Cay}
\let\phi\varphi
\let\empt\varnothing
\newcommand{\EE}{\mathbb{E}}      % for Real numbers
\newcommand{\RR}{\mathbb{R}}      % for Real numbers
\newcommand{\e}{\varepsilon} 
\newcommand{\ZZ}{\mathbb{Z}}      % for Integers
\newcommand{\PP}{\mathbb{P}}      % for Integers
\newcommand{\MM}{\mathbb{M}}      % for Integers
\newcommand{\Maper}{\mathbb{M}^\text{aper}}
\newcommand{\NN}{\mathbb{N}}      % for Integers
\newcommand{\HH}{\mathbb{H}}      % for Integers
\newcommand{\1}{\mathbbm{1}}
\newcommand\restr[2]{{% we make the whole thing an ordinary symbol
  \left.\kern-\nulldelimiterspace % automatically resize the bar with \right
  #1 % the function
  \vphantom{\big|} % pretend it's a little taller at normal size
  \right|_{#2} % this is the delimiter
  }}
\newcommand{\Rel}{\mathcal{R}} 
\newcommand{\MMo}{{\mathbb{M}_0}}
\newcommand{\Marrow}{\overrightarrow{\mathbb{M}_0}} 
\newcommand{\muarrow}{\overrightarrow{\mu_0}}
\newcommand{\PPP}{\mathscr{P}}
\DeclareMathOperator{\graph}{Graph}
\DeclareMathOperator{\intensity}{intensity}
\begin{document}

\title{The Palm groupoid of a point process and factor graphs on amenable and Property (T) groups}

\author{Sam Mellick}

\maketitle
 
 \begin{abstract}
     
     We define a probability measure preserving and $r$-discrete groupoid that is associated to every invariant point process on a locally compact and second countable group. This groupoid governs certain factor processes of the point process, in particular the existence of Cayley factor graphs. With this method we are able to show that point processes on amenable groups admit all (and only admit) Cayley factor graphs of amenable groups, and that the Poisson point process on groups with Kazhdan's Property (T) admits no Cayley factor graphs. This gives examples of pmp countable Borel equivalence relations that cannot be generated by any free action of a countable group.
     
 \end{abstract}
 
%=================================================

\section*{Introduction}

This paper discusses \emph{invariant point processes} on locally compact and second countable groups. The reader is not assumed to have any familiarity with point process theory, only the most basic probability is required. If $G$ is such a group, then a point process is a random\footnote{We will be interested in point processes as examples of \emph{actions} of groups, but we will nevertheless use probability theoretic terminology where possible as it is a more elegant language for expressing many things. See Definition \ref{basicdefs} for a review of the terminology.} closed and discrete subset $\Pi \subset G$, and it is called \emph{invariant} if its distribution is unchanged by translation -- that is, the distribution of $\Pi$ is the same as that of $g\Pi$ for all $g \in G$. If $G$ is a discrete group, then this is nothing other than an invariant percolation on the group. We concern ourselves with the case of \emph{nondiscrete} groups. 

A \emph{factor graph} on such a point process $\Pi$ is a deterministically and measurably constructed graph $\mathscr{G}(\Pi)$ with vertex set $\Pi$. This graph should be equivariant in the sense that $\mathscr{G}(g\Pi) = g \mathscr{G}(\Pi)$ -- that is, that the graph only depends on the relative position of the points.

We are interested in the relationship between possible factor graphs on point processes of groups and various group theoretic properties. For example, on which groups are there point processes that admit factor graphs isomorphic to $\ZZ$? This was first investigated by Holroyd and Peres in \cite{holroyd2003}, where they prove that the \emph{Poisson point process} on $\RR^n$ admits such a factor graph for every $n$. This was later extended by Tim\'{a}r in \cite{timar2004} who proved that all free and ergodic point processes on $\RR^n$ admit such factor graphs, and moreover, admit factor graphs isomorphic to $\ZZ^d$ (with no dimensional restriction on $n$ and $d$). These papers were the inspiration for the present work -- in reading them one is struck by the similarity of the techniques with that of the theory of probability measure preserving countable Borel equivalence relations. This is no coincidence, and in leveraging that observation we are able to prove the maximum generalisation of those theorems:

\begin{thm}\label{amenabletheorem}
Let $G$ be a locally compact, second countable, unimodular, noncompact group and $\Pi$ a free and ergodic invariant point process on $G$ of finite intensity. 

Then $\Pi$ admits Cayley factor graphs of amenable discrete groups if and only if $G$ is amenable. In that case, it admits Cayley factor graphs of \emph{all} finitely generated infinite amenable groups.
\end{thm}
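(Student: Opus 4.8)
The plan is to reduce everything to the structure theory of the Palm groupoid $\mathcal{G}_\Pi$ of $\Pi$, and in particular to its orbit equivalence relation $\mathcal{R}_\Pi$ on the Palm space $(\Marrow,\muarrow)$, which is a pmp countable Borel equivalence relation. Since $\Pi$ is free, the isotropy of $\mathcal{G}_\Pi$ is trivial almost everywhere, so $\mathcal{G}_\Pi$ is principal and nothing is lost in passing to $\mathcal{R}_\Pi$. I will use two structural facts, both of which I expect to be available from the earlier development. \textbf{Fact (i):} Cayley factor graphs of a finitely generated group $\Gamma$ on $\Pi$ correspond, up to the appropriate notion of equivalence, to free pmp actions of $\Gamma$ on $(\Marrow,\muarrow)$ generating $\mathcal{R}_\Pi$ -- traversing an edge labelled by a generator of $\Gamma$ is precisely applying that generator. \textbf{Fact (ii):} $\mathcal{R}_\Pi$ is amenable if and only if $G$ is amenable. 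I will also record two genericity properties: $\mathcal{R}_\Pi$ is ergodic, because $\Pi$ is; and $\mathcal{R}_\Pi$ is aperiodic (a.e.\ class infinite), because $G$ is noncompact and $\Pi$ is invariant of finite positive intensity, hence $\Pi$ is a.s.\ infinite, and the $\mathcal{R}_\Pi$-class of a configuration is in bijection with its points.

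\emph{Only if.} Suppose $\Pi$ admits a Cayley factor graph of an amenable discrete group $\Gamma$. By Fact (i) this yields a free pmp $\Gamma$-action generating $\mathcal{R}_\Pi$ (if the factor graph is disconnected, one first passes to the subrelation ``same component'' and checks, by the standard component-counting argument for ergodic processes, that it exhausts $\mathcal{R}_\Pi$); since $\Gamma$ is amenable, $\mathcal{R}_\Pi$ is amenable, amenability of a pmp equivalence relation being inherited from any group that generates it. Fact (ii) then forces $G$ to be amenable.

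\emph{If.} Suppose $G$ is amenable. By Fact (ii), $\mathcal{R}_\Pi$ is amenable, hence hyperfinite by Connes--Feldman--Weiss; being also ergodic and aperiodic, it is isomorphic to the unique ergodic hyperfinite pmp equivalence relation $\mathcal{R}_0$. Let $\Gamma$ be any finitely generated infinite amenable group and fix a finite generating set. Take any free ergodic pmp action of $\Gamma$ -- for instance a Bernoulli shift, which is free ($\Gamma$ being infinite) and ergodic. By Ornstein--Weiss its orbit equivalence relation is hyperfinite, and being ergodic and aperiodic it is again isomorphic to $\mathcal{R}_0 \cong \mathcal{R}_\Pi$. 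Transporting the $\Gamma$-action through this orbit equivalence isomorphism produces a free pmp $\Gamma$-action on $(\Marrow,\muarrow)$ generating $\mathcal{R}_\Pi$, and Fact (i), applied in reverse, converts it into a $\Gamma$-Cayley factor graph of $\Pi$. This establishes both the stated equivalence and the final clause.

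\emph{Main obstacle.} The crux is Fact (ii). Its proof should run through the transformation groupoid $G \ltimes X$ of the $G$-action on the space $X$ of point configurations: $\mathcal{G}_\Pi$ is the restriction of this groupoid to the cross-section determined by $\Pi$, which has positive and finite covolume (the reciprocal of the intensity), so amenability of the two groupoids is equivalent by Morita invariance of groupoid amenability; and for the free pmp action $G \acts X$, amenability of $G \ltimes X$ is equivalent to amenability of $G$ by Zimmer's theorem. A secondary point needing care is the precise form of Fact (i) -- in particular, verifying that a free action generating $\mathcal{R}_\Pi$ automatically respects the groupoid structure and therefore descends to a genuine $G$-equivariant factor graph on $\Pi$ -- together with the disconnectedness issue flagged above.
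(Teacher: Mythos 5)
Your proof is correct and follows the same high-level architecture as the paper's: pass to the Palm equivalence relation, establish that its amenability is equivalent to that of $G$, then invoke Connes--Feldman--Weiss and Ornstein--Weiss/Dye to transport a free $\Gamma$-action through an orbit equivalence. The correspondence you call Fact~(i) is exactly Remark~\ref{cbercorrespondence}, and the ``if'' direction is word-for-word the paper's argument.

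Where you diverge is the proof of your Fact~(ii). The paper does not appeal to Morita invariance of groupoid amenability or Zimmer's theorem. Instead it introduces \emph{one-ended clumpings} (Definition~\ref{clumping}) and proves Theorem~\ref{clumpingsexist} directly: from a clumping one builds a left-invariant mean on $G$ by averaging over clumping classes and taking an ultralimit; conversely, given a left-invariant mean on $G$ one builds local means on rerooting classes by extending functions on $[\omega]_{\Rel}$ to functions on $G$ that are constant on Voronoi cells, then applying the invariant mean. Hyperfiniteness then follows from Connes--Feldman--Weiss. Your route through the action groupoid $G\ltimes\MM$, the fact that $\MMo$ is a cross-section of positive finite covolume, Morita equivalence, and Zimmer's characterisation of amenable free pmp actions is also valid; the paper in fact records this perspective in Appendix~\ref{crosssectionappendix} as the ``folklore theorem'' (Proposition~4.3 of the cited reference) and explicitly says the body of the paper ``can be viewed as a rediscovery of the above theorem with different proofs.'' What the paper's route buys is self-containment and a probabilistically concrete statement (the clumping characterisation), which is of independent interest since it directly generalises the Holroyd--Peres and Tim\'{a}r results; what your route buys is brevity by quoting standard groupoid machinery.

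Two small points. The aside about disconnected factor graphs is superfluous: a Cayley factor graph has, by definition, almost every fiber isomorphic to $\Cay(\Gamma,S)$, which is connected, and the paper already notes that connectedness of $\mathscr{G}(\omega)$ for every input is equivalent to the corresponding subset of $\Marrow$ generating the rerooting groupoid. Also, $\mathcal{R}_\Pi$ lives on the unit space $(\MMo,\mu_0)$, not on the arrow space $(\Marrow,\muarrow)$ as written; this is purely notational and does not affect the argument.
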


Here a \emph{Cayley factor graph} is a factor graph which happens to be the Cayley graph of some fixed countable group.

The strategy for proving Theorem \ref{amenabletheorem} is to rephrase the question in terms of an associated algebraic object. This algebraic object gives an alternative description of factor graphs and other factor constructions of interest, and its key features are summarised in the following theorem:

\begin{thm}\label{correspondencetheorem}
Let $G$ be a locally compact and second countable group, and $\Pi$ an invariant point process on $G$ with law $\mu$. 

Then associated to this data is an $r$-discrete probability measure preserving groupoid $(\Marrow, \muarrow)$ called \emph{the Palm groupoid} of $\Pi$. It has the following properties:
\begin{itemize}
    \item Factor thinnings $\theta : (\MM, \mu) \to \MM$ of $\Pi$ are in correspondence with Borel subsets $A$ of the unit space $\MMo$ of the Palm groupoid,
    \item Factor $\Xi$-markings $\mathscr{C} : (\MM, \mu) \to \Xi^\MM$ are in correspondence with Borel $\Xi$-valued maps $P$ defined on the unit space $\MMo$ of the Palm groupoid, and
    \item Factor graphs $\mathscr{G} : (\MM, \mu) \to \graph(G)$ of $\Pi$ are in correspondence with Borel subsets $\mathscr{A}$ of the arrow space $\Marrow$ of the Palm groupoid.
\end{itemize}

\end{thm}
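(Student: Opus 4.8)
The plan is to build the Palm groupoid by hand and then obtain each of the three correspondences as an essentially tautological consequence of $G$-equivariance, with Palm theory (the Mecke / refined Campbell exchange formula) supplying the only real content.

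First I would fix the objects. Take $\MMo$ to be the space of \emph{rooted configurations} --- closed discrete subsets $\eta \subseteq G$ with $e \in \eta$, a standard Borel space with the structure inherited from $\MM$ --- equipped with the Palm measure $\mu_0$ of $(\MM,\mu)$, normalised to a probability measure using finiteness of the intensity. Define the arrow space
\[
 \Marrow = \{(\eta,x) : \eta \in \MMo,\ x \in \eta\} \subseteq \MMo \times G ,
\]
with $s(\eta,x) = \eta$, $r(\eta,x) = x^{-1}\eta$, composition $(\eta,x)\cdot(x^{-1}\eta,y) = (\eta,xy)$, units $(\eta,e)$ and inverses $(\eta,x)^{-1} = (x^{-1}\eta,x^{-1})$; verifying the groupoid axioms is mechanical, and the groupoid is $r$-discrete because every source (equivalently range) fibre $s^{-1}(\eta) \cong \eta$ is countable. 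Finally put on $\Marrow$ the measure $\muarrow$ whose disintegration over the source map is counting measure on each fibre integrated against $\mu_0$, i.e.
\[
 \int_{\Marrow} f \, d\muarrow \;=\; \int_{\MMo} \sum_{x \in \eta} f(\eta,x)\, d\mu_0(\eta).
\]
The one substantive point of this stage --- and the step I expect to be the main obstacle --- is that $\muarrow$ is invariant under the groupoid, equivalently that $\mu_0$ is invariant under the induced orbit equivalence relation $\eta \sim x^{-1}\eta$ on $\MMo$; this is precisely the mass-transport principle for point processes, i.e.\ the Mecke / exchange formula for Palm measures, together with the standard Borel bookkeeping needed to make ``select a point and translate'' a Borel operation. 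Granting this, $(\Marrow,\muarrow)$ is an $r$-discrete pmp groupoid, which is the first assertion of the theorem.

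The mechanism behind all three correspondences is the same: a factor construction is $G$-equivariant, so any decision it makes about a point $x$ of a configuration $\omega$ can depend only on $\omega$ as seen from $x$, i.e.\ only on $x^{-1}\omega \in \MMo$, and any decision it makes about an ordered pair $(x,y)$ of points of $\omega$ can depend only on the arrow $(x^{-1}\omega, x^{-1}y) \in \Marrow$. Concretely, given a factor thinning $\theta : (\MM,\mu) \to \MM$ with $\theta(\omega) \subseteq \omega$, I would set $A = \{\eta \in \MMo : e \in \theta(\eta)\}$; conversely, from a Borel $A \subseteq \MMo$ define $\theta_A(\omega) = \{x \in \omega : x^{-1}\omega \in A\}$. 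That $\theta_A$ is a factor thinning and that $A$ is Borel are immediate from the definitions; the things to check are (i) that $\theta \mapsto A$ and $A \mapsto \theta_A$ are mutually inverse modulo null sets on both sides, and (ii) that ``$\mu_0$-a.e.'' and ``$\mu$-a.e.'' statements transfer faithfully, so that a $\mu_0$-negligible set of pathological rooted configurations cannot spoil $\theta_A$ on a set of configurations of positive $\mu$-measure. Both (i) and (ii) are again instances of the Campbell--Mecke formula and finiteness of the intensity.

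The $\Xi$-marking case is identical, with the Boolean choice ``$e \in \theta(\eta)$'' replaced by a $\Xi$-valued one: viewing a $\Xi$-marking of $\omega$ as a function $\omega \to \Xi$, a factor $\Xi$-marking $\mathscr{C}$ corresponds to the Borel map $P : \MMo \to \Xi$ recording the mark of the root, $P(\eta) = \mathscr{C}(\eta)(e)$, with inverse $\mathscr{C}(\omega)(x) = P(x^{-1}\omega)$; factor thinnings are the special case $\Xi = \{0,1\}$. The factor-graph case is the marking story applied to ordered pairs: given a factor graph $\mathscr{G} : (\MM,\mu) \to \graph(G)$ with vertex set $\omega$, set
\[
 \mathscr{A} = \{(\eta,z) \in \Marrow : \{e,z\} \text{ is an edge of } \mathscr{G}(\eta)\},
\]
and conversely, from Borel $\mathscr{A} \subseteq \Marrow$ declare $\{x,y\}$ an edge of $\mathscr{G}_{\mathscr{A}}(\omega)$ exactly when $(x^{-1}\omega, x^{-1}y) \in \mathscr{A}$. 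One observes that the arrow $(x^{-1}\omega, x^{-1}y)$ has source $x^{-1}\omega$ and range $y^{-1}\omega$, so that an edge is literally an arrow joining the two rootings of $\omega$ at its endpoints; for \emph{undirected} graphs $\mathscr{A}$ is moreover invariant under the inversion $(\eta,z)\mapsto(\eta,z)^{-1}$, and one restricts to such $\mathscr{A}$ (directed factor graphs corresponding to arbitrary Borel $\mathscr{A}$). Well-definedness, equivariance, Borelness in both directions and mutual inversion modulo null sets then run exactly as in the thinning case, the relevant a.e.\ statements being transferred by Campbell--Mecke. In short, once the groupoid is built and the invariance of $\muarrow$ established, all three correspondences are formal, and the labour is concentrated in that single appeal to Palm theory.
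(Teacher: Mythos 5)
Your proposal is essentially the paper's proof and follows the same route: construct the groupoid with source/range/composition exactly as you do, equip the arrows with the measure disintegrating counting measure over $\mu_0$, deduce measure-invariance from a Mecke/mass-transport identity (the paper does this via the CLMM formula in Proposition~\ref{pmpgroupoid}, explicitly invoking unimodularity), and then set up the three correspondences via the same equivariance-forced formulas $A = \{\eta : e\in\theta(\eta)\}$, $P(\eta)=\mathscr{C}(\eta)(e)$, $\mathscr{A} = \{(\eta,z) : (e,z)\in\mathscr{G}(\eta)\}$.

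The one place you are vaguer than the paper is in the a.e.\ bookkeeping, which you attribute wholesale to ``Campbell--Mecke.'' The paper instead isolates two intermediate statements. First, a \emph{transfer principle} (Proposition~\ref{transferprinciple}): shift-invariant subsets of $\MM$ and rootshift-invariant subsets of $\MMo$ correspond, and $\mu$- and $\mu_0$-measure agree on them (proved first for ergodic $\mu$ from the bare definition of $\mu_0$, then via ergodic decomposition). Second, an \emph{extension lemma} (Lemma~\ref{extensionlemma}): equivariant Borel maps $\MM \to X$ agree $\mu$-a.e.\ iff their restrictions to $\MMo$ agree $\mu_0$-a.e.\ --- this is what makes ``restrict the $\mu$-a.e.\ defined thinning to the $\mu$-null set $\MMo$'' meaningful. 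Finally, to show that $\mu_0$-a.e.\ equal sets $A,B\subseteq\MMo$ give $\mu$-a.e.\ equal thinnings, the paper passes to the \emph{$\Rel$-saturation} of $A\triangle B$ (which is still $\mu_0$-null) before invoking the extension lemma; for factor graphs the analogous step goes through the fact that $\muarrow(\mathscr{A}\triangle\mathscr{B})=0$ forces the saturation of the bad set of units to be $\mu_0$-null. None of this changes the architecture of your argument --- it is the explicit form of the ``standard Borel bookkeeping'' you flagged --- but it is worth knowing that the precise tool is shift-invariance transfer plus saturation, not a direct application of the exchange formula.
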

 Once the above theorem is established, Theorem \ref{amenabletheorem} follows immediately from Ornstein-Weiss and Dye's Theorem. 

Having satisfactorily answered the question of Cayley factor graphs on amenable groups, we then turn to the opposite property -- Kazhdan's Property (T). Here we restrict ourselves to the study of a particular point process on such groups:

\begin{thm}\label{kazhdantheorem}
Let $G$ be a locally compact and second countable nondiscrete group with Kazhdan's Property (T), and $\Pi$ be the Poisson point process on $G$. Assume further that $G$ has no compact normal subgroups.

Then $\Pi$ admits \emph{no} Cayley factor graphs, and no factor of IID Cayley factor graphs.
\end{thm}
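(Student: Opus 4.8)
The plan is to use Theorem~\ref{correspondencetheorem} to convert the statement into a rigidity property of the Palm groupoid, and then to show that Property~(T) of $G$ forces that rigidity. Write $(\Marrow,\muarrow)$ for the Palm groupoid of $\Pi$ and let $\Rel$ be its orbit equivalence relation on the unit space $\MMo$; the hypotheses on $G$ ensure that the Poisson point process is a free $G$-action, so $\Rel$ is a principal, pmp, ergodic countable Borel equivalence relation. Suppose for contradiction that $\Pi$ admits a Cayley factor graph $\mathscr{G}$, say $\mathscr{G}(\omega)\cong\Cay(\Gamma,S)$ for a fixed finitely generated infinite group $\Gamma$ and finite symmetric generating set $S$; by the last item of Theorem~\ref{correspondencetheorem} this is a Borel subset of $\Marrow$. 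Recording, for the rooted configuration, the $S$-neighbours of the root vertex defines a pmp action $\Gamma\acts(\MMo,\mu_0)$; it is free, since for $\gamma\neq e$ traversing $\gamma$ from the root in $\mathscr{G}(\omega)\cong\Cay(\Gamma,S)$ lands on a vertex other than the root and a generic configuration has no symmetries, and its orbits are exactly the $\Rel$-classes, since $\Cay(\Gamma,S)$ is connected with vertex set all of $\Gamma$. So a Cayley factor graph of $\Gamma$ exhibits $\Rel$ as the orbit equivalence relation of a free pmp action of the countably infinite group $\Gamma$. The factor of IID case is the same after replacing $\Pi$ by the point process obtained by attaching i.i.d.\ $[0,1]$-marks to its points, and $\Marrow$ by that process's Palm groupoid. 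Hence it suffices to prove that $\Rel$ is not generated by any free pmp action of a countably infinite group.

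The crucial point, which I sketch next, is that the Palm groupoid of the Poisson point process on $G$ --- and that of its i.i.d.-decorated version --- has Property~(T) as a pmp groupoid; in particular $\Rel$ is a Property~(T) equivalence relation. I would prove this by transporting unitary representations between $G$ and the Palm groupoid, parallel to the transport of factor constructions in Theorem~\ref{correspondencetheorem}. The input from point process theory is the Fock-space (chaos) description of the Poisson measure: the Koopman representation of the Poisson suspension $G\acts(\MM,\mu)$ on $L^2_0(\MM,\mu)$ is isomorphic to $\bigoplus_{n\ge1}\mathrm{Sym}^n(L^2(G))$ with the diagonal tensor action, where $L^2(G)$ carries the left regular representation $\lambda_G$ (here one uses that a Kazhdan group is unimodular, so the intensity measure is Haar measure up to scale). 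By Fell's absorption principle $\lambda_G^{\otimes n}$ is, for each $n\ge1$, contained in $\lambda_G^{\oplus\infty}$, so $L^2_0(\MM,\mu)$ is contained in $\lambda_G^{\oplus\infty}$; and since $G$ is noncompact with Property~(T) it is non-amenable, so $\lambda_G$ does not weakly contain the trivial representation, whence $\lambda_G^{\oplus\infty}$ --- and therefore the Poisson suspension --- has a spectral gap. Carrying this comparison through the correspondence between representations of the Palm groupoid and of $G$ (all of which, after the Fock and Fell-absorption reductions, are contained in multiples of $\lambda_G$) upgrades the spectral gap to Property~(T): every representation of the Palm groupoid with almost invariant vectors has invariant vectors. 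Adding i.i.d.\ marks only replaces $L^2(G)$ by $L^2(G)\otimes L^2[0,1]$, which Fell absorption again collapses into a multiple of $\lambda_G$, so the decorated Palm groupoid has Property~(T) too. Making this transfer precise --- singling out the relevant representations of the Palm groupoid and checking that almost invariance survives the correspondence --- is the step I expect to be the main obstacle.

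Granting the claim, the argument concludes as follows. Since $\Rel$ has Property~(T) and, under the standing assumption, equals the orbit equivalence relation of a free ergodic pmp action of $\Gamma$, and since Property~(T) of a group is detected by the orbit relation of any free ergodic pmp action (the group has~(T) if and only if the relation does), the group $\Gamma$ has Property~(T); in particular $\Gamma$ is non-amenable, which already disposes of the amenable case. For non-amenable $\Gamma$ one exploits the Kazhdan property of $\Gamma$ together with the translation cocycle $c\colon\Gamma\times\MMo\to G$, which sends $(\gamma,(\omega,0))$ to the element of $G$ carrying the root to its $\gamma$-image and whose essential range along each $\Gamma$-orbit is a genuine Poisson configuration in $G$: either one invokes the theorem of Hutchcroft and Pete that Kazhdan groups have fixed price one, so that $\Rel$ would have cost one, against a lower bound $\cost(\Rel)>1$ for the Palm groupoid of a Poisson point process on a non-amenable group; or, when $G$ carries enough algebraic structure, one applies cocycle superrigidity to $c$ to push its range into a compact subgroup of $G$, contradicting that this range is a full Poisson configuration in a noncompact group with no compact normal subgroups. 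Either way $\mathscr{G}$ cannot exist, and the same argument applied to the i.i.d.-decorated process shows that $\Pi$ admits no factor of IID Cayley factor graph.
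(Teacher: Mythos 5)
Your reduction---it suffices to rule out a free pmp $\Gamma$-action on $([0,1]^{\MMo},[0,1]^{\mu_0})$ generating the rerooting equivalence relation $\Rel$---matches the paper, and your observations about freeness and ergodicity are correct. But from there you diverge from the paper's argument in a way that leaves the proof genuinely unfinished.

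First, your central intermediate claim, that the Palm groupoid of the IID Poisson has Property~(T) as a pmp groupoid, is stated in the paper only as a remark (``one can readily show\dots''), is not used in the proof, and you yourself flag its transfer argument as ``the main obstacle.'' More importantly, even granting it, the endgame you propose does not close. Knowing that $\Rel$ has Property~(T) tells you, via Zimmer/Anantharaman-Delaroche, that any $\Gamma$ freely generating $\Rel$ would itself be Kazhdan; but this alone is no contradiction---many Kazhdan groups admit free ergodic pmp actions. Your two suggested ways to finish both fall short. The cost route needs \emph{both} the Hutchcroft--Pete fixed-price-one theorem for Kazhdan groups \emph{and} a lower bound $\cost(\Rel)>1$ for the Palm equivalence relation of a Poisson process on a non-amenable group; the latter is not established in this paper (it is described as appearing in a separate work) and you cannot simply invoke it. The superrigidity route, ``apply cocycle superrigidity to $c\colon\Gamma\times\MMo\to G$ to push its range into a compact subgroup,'' is hedged with ``when $G$ carries enough algebraic structure''---but $G$ here is an arbitrary nondiscrete lcsc Kazhdan group, and there is no off-the-shelf superrigidity theorem for cocycles from a Kazhdan $\Gamma$-action into an arbitrary lcsc target. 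That $\Rel$ being Kazhdan does not obviously force the contradiction is precisely why the paper poses, as an open question attributed to Ab\'ert, whether a free $\Gamma$-action generating the Palm equivalence relation of \emph{an arbitrary} ergodic point process on a Kazhdan group forces $\Gamma$ to be a lattice.

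The idea you are missing is the paper's main technical device: the \emph{extra head scheme}, built from a balanced allocation. This gives a groupoid factor map $G\times(\MM,\mu)\to(\Marrow,\muarrow)$ (Remark~\ref{groupoidmap}, via Proposition~\ref{induction}) which lets one \emph{lift} the orbit-equivalence cocycle $c\colon\Rel\to\Gamma$ to a cocycle $C\colon G\times([0,1]^{\MM},[0,1]^{\mu})\to\Gamma$ on the genuine action groupoid of $G$. There Popa's cocycle superrigidity applies directly: the paper proves that the IID Poisson is \emph{malleable}, it is mixing, and $G$ has~(T), so $C$ untwists to a homomorphism $\rho\colon G\to\Gamma$. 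Since $G$ is nondiscrete with no compact normal subgroups, $\ker\rho$ is an open noncompact normal subgroup, and mixing of the IID Poisson forces the $\ker\rho$-invariant map $\omega\mapsto f(\omega)^{-1}E(\omega)$ to be essentially constant, which would make the Poisson concentrate on a single $G$-orbit in $\MMo$---impossible since the Poisson Palm measure is nonatomic (Mecke--Slivnyak). Without the balanced-allocation/extra-head-scheme lifting and the malleability of the IID Poisson, the superrigidity simply has no foothold, and this is the gap in your proposal.
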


Here a \emph{factor IID graph} is an equivariantly defined graph where each point of the process is also allowed its own $\texttt{Unif}[0,1]$ random variable. 

In the case of discrete groups, the above theorem is due to Popa \cite{MR2342637} (see also Section 4 of \cite{vaes2006rigidity} for further discussion). Here references to the Poisson point process should be replaced by $\text{Ber}(p)$ percolation on the group, and the conclusion is that the only possible such factor graph is the Cayley graph of the group itself when $p = 1$.

Theorem \ref{kazhdantheorem} gives examples of probability measure preserving countable Borel equivalence relations which cannot be \emph{freely} generated by any action of a discrete group (the existence of such objects was first established by Furman in \cite{furman1999orbit}). The Poisson point process examples have additional properties, see Section \ref{tsection}.

\subsection*{Structure of paper}

In Section \ref{basicdefs} we set the scene and introduce point processes. This is meant as an overview for those unfamiliar with point processes and has no original content. For further details on the history of point processes and explicit proofs of technical facts one should consult \cite{vere2003introduction} and \cite{daley2007introduction}. A good gentle introduction to point process theory is \cite{MR1207584}. Two modern sources that explicitly discuss unimodularity in the context of point processes are \cite{baszczyszyn:cel-01654766} and \cite{MR3791470}.

In Section \ref{groupoid} we introduce the \emph{rerooting groupoid}, an object which governs the Borel factors of a point process. We also equip this groupoid with the \emph{Palm measure} of point processes and see how unimodularity of the ambient group manifests itself as the groupoid being probability measure preserving. In this way we see that not only \emph{Borel} factors are governed by the groupoid, but \emph{measured} ones as well. This is Theorem \ref{correspondencetheorem}.

In Section \ref{cayleyfactorgraphs} we apply the above theory to prove Theorem \ref{amenabletheorem} and Theorem \ref{kazhdantheorem}. 

Finally, in the appendix we include a discussion of \emph{cross-sections} and how they relate to point processes and Theorem \ref{amenabletheorem}.

\subsection*{Acknowledgements}
This work was partially supported by ERC Consolidator Grant 648017.   

This paper forms part of a thesis that the author wrote under the supervision of Mikl\'{o}s Ab\'{e}rt. Special thanks are given to Alessandro Carderi and Mikołaj Fraczyk for discussions on a preliminary version of this paper, and to Benjamin Hayes for suggesting a more general version of Theorem \ref{kazhdantheorem}.

 \tableofcontents

\section{Point processes and factors of interest}\label{basicdefs}

\subsection{Basic definitions}

Let $(Z,d)$ denote a complete and separable metric space (a csms). A \emph{point process on $Z$} is a random discrete subset of $Z$. We will also study random discrete subsets of $Z$ that are \emph{marked} by elements of an additional csms $\Xi$. Typically $\Xi$ will be a finite set that we think of as colours.

\begin{defn}
	The \emph{configuration space} of $Z$ is
    	\[
        	\MM(Z) = \{ \omega \subset Z \mid \omega \text{ is discrete} \},
        \]
    and the \emph{$\Xi$-marked configuration space} of $Z$ is
    	\[
        	\Xi^\MM(Z) = \{ \omega \subset Z \times \Xi \mid \omega \text{ is discrete, and if } (g, \xi) \in \omega \text{ and } (g, \xi') \in \omega \text{ then } \xi = \xi' \}.
        \]
\end{defn}

Note that $\Xi^\MM(Z) \subset \MM(Z \times \Xi)$. We think of a $\Xi$-marked configuration $\omega \in \Xi^\MM(Z)$ as a discrete subset of $Z$ with labels on each of the points (whereas a typical element of $\MM(Z \times \Xi)$ is a discrete subset where each point has possibly multiple marks). 

If $\omega \in \Xi^\MM(Z)$ is a marked configuration, then we will write $\omega_z$ for the unique element of $\Xi$ such that $(z, \omega_z) \in \omega$. 

The Borel structure on configuration spaces is exactly such that the following \emph{point counting functions} are measurable. Let $U \subseteq Z$ be a Borel set. It induces a function $N_U : \MM(Z) \to \NN_0 \cup \{ \infty \}$ given by
\[
	N_U(\omega) = \abs{\omega \cap U}.
\]

We will primarily be interested in point processes defined on locally compact and second countable (lcsc) groups $G$. Such groups admit a unique (up to scaling) Haar measure $\lambda$, we fix such a choice. Recall:

\begin{thm}[Struble's theorem, see Theorem 2.B.4 of \cite{MR3561300}]
Let $G$ be a locally compact topological group. Then $G$ is second countable \emph{if and only if} it admits a proper\footnote{Recall that a metric is \emph{proper} if closed balls are compact.} and left-invariant metric.
\end{thm}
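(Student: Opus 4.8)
I would prove the two implications separately; the forward direction (second countable $\Rightarrow$ proper left-invariant metric) carries all the content.

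\textbf{The easy direction.} Suppose $G$ carries a proper metric $d$ inducing its topology (left-invariance is not even needed here). Then $G=\bigcup_{n\ge 1}\overline{B_d(e,n)}$ is a countable union of compact, hence separable, metric spaces; a countable dense subset chosen in each ball assembles into a countable dense subset of $G$, and a separable metrizable space is second countable.

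\textbf{The hard direction.} Assume $G$ is locally compact, Hausdorff and second countable. It is then first countable and Hausdorff, so by the Birkhoff--Kakutani metrization theorem it admits a left-invariant metric $\rho$ inducing its topology; rescaling, I may assume $\rho\le 1$. The length function $\rho(e,\cdot)$ is continuous, symmetric under inversion, subadditive ($\rho(e,gh)\le\rho(e,g)+\rho(e,h)$), vanishes only at $e$, and $\{g:\rho(e,g)<\e\}$ is a neighbourhood basis at $e$. The only obstruction to $\rho$ being proper is that it is bounded, so the plan is to modify it so that it becomes proper without destroying left-invariance. Second countable plus locally compact gives $\sigma$-compactness, so from a countable cover by relatively compact open sets I build an exhaustion $G=\bigcup_{n\ge1}U_n$ with each $U_n$ open, symmetric, $e\in U_1$, $\overline{U_n}$ compact and $\overline{U_n}\subseteq U_{n+1}$. (One cannot in general also arrange $U_nU_n\subseteq U_{n+1}$ — that would force $G$ to be compactly generated — and this is precisely the source of the difficulty below.) Now set $w(g)=\min\{n\ge 0:g\in U_{n+1}\}$, a symmetric function with $w(e)=0$ and relatively compact sublevel sets $\{w\le n\}=U_{n+1}$, put $F(g)=\rho(e,g)+w(g)$, and define a left-invariant pseudometric $d(x,y)=\ell(x^{-1}y)$ via the weighted word-length
\[
  \ell(g)=\inf\Bigl\{\textstyle\sum_{i=1}^k F(g_i):k\ge 1,\ g_1g_2\cdots g_k=g\Bigr\}.
\]
The algebraic properties are immediate: $\ell$ is symmetric and subadditive with $\ell(e)=0$, and since $F\ge\rho(e,\cdot)$ and $\rho(e,\cdot)$ is subadditive, every decomposition has $\sum F(g_i)\ge\rho(e,g)$, whence $\ell\ge\rho(e,\cdot)$; thus $\ell^{-1}(0)=\{e\}$ and, on $U_1$ where $w$ vanishes, $\ell=\rho(e,\cdot)$. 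So $d$ is a genuine left-invariant metric whose small balls at $e$ coincide with those of $\rho$, hence it induces the topology of $G$.

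\textbf{The obstacle.} What remains — and is the real point — is that $d$ is proper, i.e.\ $\{g:\ell(g)\le R\}$ is relatively compact. Unwinding a near-optimal decomposition $g=g_1\cdots g_k$ with $\sum F(g_i)\le R+1$: at most $R+1$ of the factors have $w(g_i)\ge1$, and these lie in the fixed compact set $\overline{U_{\lceil R\rceil+2}}$, while the remaining factors lie in $U_1$ and the $\rho$-lengths of all factors sum to at most $R+1$. Thus $g$ is a product of boundedly many elements of a fixed compact set with interspersed runs $u_1\cdots u_l$ of elements of $U_1$ of bounded total $\rho$-length, and one must confine such runs to a compact set — this is where local compactness has to be used in an essential way, to rule out escape to infinity along a path of bounded length. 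The cleanest route I am aware of is to prove the theorem first for compactly generated $G$, where one works with the \emph{unweighted} word metric $|\cdot|_S$ relative to a compact symmetric generating neighbourhood $S$ — visibly proper, since $\{g:|g|_S\le n\}=S^n$, and in need only of a smoothing near $e$ — and then to recover the general case from the exhaustion $G=\bigcup_n\langle U_n\rangle$ by open compactly generated subgroups. I expect this reduction, together with the properness verification in the compactly generated case, to be the bulk of the work; the metrization, $\sigma$-compactness and bookkeeping above are routine.
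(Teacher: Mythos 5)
The paper does not prove Struble's theorem; it is quoted as a background fact and cited directly to Cornulier--de la Harpe \cite{MR3561300}, so there is no authorial proof to compare against. Evaluating your argument on its own terms: the easy direction is correct and complete, and in the hard direction the weighted length function $\ell$ you construct is a sound choice and does give a left-invariant metric inducing the topology. But, as you candidly flag yourself, you halt at the properness verification, which is where the entire content of the theorem sits, so the proof as written is not complete.

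The missing lemma is concrete and, once stated, not hard: a product $u_1\cdots u_l$ with all $u_i\in U_1$ and $\sum_i\rho(e,u_i)\le C$ lies in a fixed compact set depending only on $C$ and $U_1$. Naive subadditivity only gives $\rho(e,u_1\cdots u_l)\le C$, which is vacuous once $C$ exceeds the bound on $\rho$, and $l$ is a priori unbounded since $\rho(e,\cdot)$ has no positive lower bound on $U_1\setminus\{e\}$. The fix is a collapsing argument: scan the run left to right, absorbing factors for as long as the running partial product stays in $U_1$; at the first step the product leaves $U_1$, it still lies in the relatively compact set $\overline{U_1 U_1}$, so close that block there and start a new one. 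The set $\overline{U_1U_1}\setminus U_1$ is compact, avoids $e$, and $\rho(e,\cdot)$ is continuous and positive away from $e$, so there is a $\delta_0>0$ bounding $\rho(e,\cdot)$ from below on it; subadditivity then forces the input to each closed block to have total $\rho$-length $>\delta_0$, so the number of blocks is at most $C/\delta_0+1$, and the whole run is confined to $(\overline{U_1U_1})^{\lceil C/\delta_0\rceil+1}$. Inserting this into your bookkeeping --- at most $R+1$ big factors lying in $\overline{U_{\lceil R\rceil+2}}$, interleaved with at most $R+2$ runs, each now confined --- finishes the properness of $\ell$ directly, so the detour through compactly generated open subgroups that you anticipate is not needed.
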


Such a metric is unique up to coarse equivalence (bilipschitz if the group is compactly generated). We fix $d$ to be any such metric. 

\begin{thm}[See Theorem A2.6.III of \cite{vere2003introduction}]
	If $X$ is a complete and separable metric space, then $\MM(X)$ is a Borel subset of a complete and separable metric space\footnote{Here $\mathcal{M}^{\#}(X)$ denotes the space of locally finite Borel measures on $X$. It will not play a role in the present work, other than in witnessing that $\MM(X)$ is standard Borel.} $\mathcal{M}^{\#}(X)$, and is thus a standard Borel space.
\end{thm}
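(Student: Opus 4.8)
The plan is to realise $\MM(X)$ as a Borel subset of the Polish space $\mathcal{M}^\#(X)$ of boundedly finite measures and then invoke the fact that a Borel subset of a standard Borel space is standard Borel. First I would set up $\mathcal{M}^\#(X)$ concretely: fix a countable dense set $\{x_i\}_{i\ge1}\subseteq X$ and put $S_k=\bigcup_{i\le k}B(x_i,k)$, an increasing sequence of bounded open sets exhausting $X$; call a Borel measure \emph{boundedly finite} if it is finite on every bounded Borel set, equivalently if $\mu(S_k)<\infty$ for all $k$, and let $\mathcal{M}^\#(X)$ be the set of such measures equipped with the $w^\#$-topology, $\mu_n\to\mu$ iff $\int f\,d\mu_n\to\int f\,d\mu$ for every bounded continuous $f\colon X\to\RR$ of bounded support. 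There are then two tasks: (i) show $(\mathcal{M}^\#(X),w^\#)$ is completely metrisable and separable; (ii) show $\MM(X)$ sits inside it as a Borel set.

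For (i) I would metrise $w^\#$ by patching the Prokhorov metrics on finite pieces: choosing the radii defining the $S_k$ generically so that each boundary $\partial B(x_i,r)$ is null for the relevant measures, the restriction maps $r_k\colon\mu\mapsto\mu(\,\cdot\cap S_k)$ become continuous into the space $\mathcal{M}_f(X)$ of finite Borel measures on $X$ with the weak topology, and
\[
d^\#(\mu,\nu)=\sum_{k\ge1}2^{-k}\,\frac{\dprok(r_k\mu,r_k\nu)}{1+\dprok(r_k\mu,r_k\nu)}
\]
induces $w^\#$; completeness and separability then descend from the classical corresponding facts about $\mathcal{M}_f(\,\cdot\,)$ on a Polish space via a coherence/diagonal argument. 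This is the step with genuine content, and it is exactly what is carried out in \cite{vere2003introduction}; in writing the proof I would either cite it or reproduce the boundary-null-set bookkeeping, but I would not expect surprises.

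With (i) in hand, for (ii) I would fix a \emph{dissecting system} $(\mathcal{T}_n)_{n\ge1}$ — nested countable Borel partitions of $X$ into bounded sets, each refining the previous, with $\max\{\diam C : C\in\mathcal{T}_n,\ C\subseteq S_k\}\to0$ for every $k$ (these exist for any $X$ carrying a countable bounded cover). The key soft fact is that $\mu\mapsto\mu(C)$ is Borel on $\mathcal{M}^\#(X)$ for every Borel $C$: for bounded open $C$ it is a countable supremum of the continuous functionals $\mu\mapsto\int f\,d\mu$ with $0\le f\le\1_C$ of bounded support, hence lower semicontinuous, and the class of $C$ for which $\mu\mapsto\mu(C)$ is Borel is a $\sigma$-algebra containing the open sets. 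Hence the set $\mathcal{N}^\#(X)$ of measures with $\mu(C)\in\NN_0$ for every cell $C$ is Borel, and by finite additivity plus a monotone-class argument it consists precisely of the locally finite counting measures $\sum_i\delta_{y_i}$. The assignment $\omega\mapsto\sum_{x\in\omega}\delta_x$ then identifies $\MM(X)$ with the \emph{simple} members of $\mathcal{N}^\#(X)$, and it carries the counting functions $N_U$ to the evaluations $\mu\mapsto\mu(U)$, so it is a Borel isomorphism onto its image for the subspace Borel structure. Finally, a measure $\mu\in\mathcal{N}^\#(X)$ is simple iff for each $k$ the finite point measure $\mu|_{S_k}$ has no repeated atom, which happens iff for all large enough $n$ every cell $C\in\mathcal{T}_n$ with $C\subseteq S_k$ has $\mu(C)\le1$; so inside $\mathcal{N}^\#(X)$ one gets
\[
\MM(X)=\bigcap_{k\ge1}\ \bigcup_{n\ge1}\ \bigcap_{\substack{C\in\mathcal{T}_n\\ C\subseteq S_k}}\{\mu:\mu(C)\le1\},
\]
a Borel set. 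Therefore $\MM(X)$ is Borel in the standard Borel space $\mathcal{M}^\#(X)$, hence is itself standard Borel.

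I expect the only real obstacle to be step (i): because $X$ is assumed merely Polish and not locally compact, the Riesz-representation and one-point-compactification shortcuts that make the locally compact case routine are unavailable, so the completeness of the $w^\#$-metric must be obtained by hand, controlling the boundaries of the exhausting sets $S_k$ and checking coherence of restrictions. Everything after that is the soft measurable-structure argument above — evaluations $\mu\mapsto\mu(C)$ are Borel, integrality is a countable intersection of such conditions, and simplicity is the displayed $\bigcap\bigcup\bigcap$ — and I would not expect any of it to cause trouble.
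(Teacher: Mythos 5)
The paper does not prove this statement; it cites it as Theorem A2.6.III of Daley--Vere-Jones. Your reconstruction follows the same route that reference takes: realise $\MM(X)$ as the simple integer-valued boundedly finite measures inside $\mathcal{M}^\#(X)$, Polishise $\mathcal{M}^\#(X)$ by patching Prokhorov metrics along an exhaustion, and cut out the integer-valued simple measures with a dissecting system via the $\bigcap\bigcup\bigcap$ formula. Steps (ii) (Borel measurability of evaluations via l.s.c.\ on bounded open sets plus a Dynkin/monotone-class extension, then the dissecting-system characterisation of simplicity) are correct as sketched, and the displayed Borel description of $\MM(X)$ inside $\mathcal{N}^\#(X)$ is right -- the intersection over $k$ does take care of atoms near $\partial S_k$.

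One point in step (i) deserves a correction rather than deferral to ``bookkeeping'': the discrete sum
\[
d^\#(\mu,\nu)=\sum_{k\ge1}2^{-k}\,\frac{\dprok(r_k\mu,r_k\nu)}{1+\dprok(r_k\mu,r_k\nu)}
\]
with sharp restrictions $r_k\mu=\mu(\,\cdot\cap S_k)$ does not metrise the $w^\#$-topology, because no countable choice of radii makes all the boundary spheres $\partial B(x_i,\rho_k)$ null \emph{for every} boundedly finite measure simultaneously -- genericity works relative to a fixed countable family of measures, not relative to the whole space you are trying to metrise. The map $r_k$ is therefore discontinuous precisely at those $\mu$ that charge $\partial S_k$, and the metric fails to be topologically equivalent to $w^\#$ there. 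Daley--Vere-Jones avoid this by integrating over a \emph{continuous} radius parameter, $d(\mu,\nu)=\int_0^\infty e^{-r}\,\frac{d_r(\mu^{(r)},\nu^{(r)})}{1+d_r(\mu^{(r)},\nu^{(r)})}\,dr$, which averages out the bad radii (each fixed $\mu$ charges only countably many spheres). Equivalently one can replace sharp restrictions by pairings against bounded continuous functions of bounded support. With either fix the Polishness argument goes through as you describe, and the rest of your proposal stands.
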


The above theorem implies that the theory of $\MM(X)$ valued random variables is well-behaved. 

We mostly consider the configuration space of a fixed group $G$. So out of notational convenience let us write $\MM = \MM(G)$ and $\Xi^\MM = \Xi^\MM(G)$. The latter here is an abuse of notation: formally $\Xi^\MM$ ought to denote the set of functions from $\MM$ to $\Xi$, but instead we are using it to denote the set of functions from \emph{elements} of $\MM$ to $\Xi$.

Note that the marked and unmarked configuration spaces of $G$ are Borel $G$-spaces. To spell this out, $G \acts \MM$ by $g \cdot \omega = g\omega$ and $G \acts \Xi^\MM$ by
\[
    g \cdot \omega = \{(gx, \xi) \in G \times \Xi \mid (g, \xi) \in \omega \}.
\]

\begin{defn}\label{ppreview}
	A \emph{point process} on $G$ is a $\MM(G)$-valued random variable $\Pi$, that is, a measurable function $\Pi: (\Omega, \PP) \to \MM(G)$, where $(\Omega, \PP)$ is some\footnote{As is usual in probability theory, the specifics of this probability space will never come up.} auxiliary probability space. Its \emph{law} or \emph{distribution} $\mu_\Pi$ is the pushforward probability measure $\Pi_*(\PP)$ on $\MM(G)$. It is \emph{invariant} if its law is an invariant probability measure for the action $G \acts \MM(G)$.
	
	The associated \emph{point process action} of an invariant point process $\Pi$ is $G \acts (\MM(G), \mu_\Pi)$.
\end{defn}

Some remarks and caveats are in order:
\begin{itemize}
	\item Point processes which are not invariant are very much of interest, but the only examples which we will consider will be so-called ``Palm point processes'', to be defined later. Thus unless explicitly prefaced by the word ``Palm'', one ought to interpret ``point process'' as ``invariant point process'' throughout this work.
	\item Sometimes one will see the term \emph{simply point process} for what we are calling point processes, as each point has multiplicity one. We simply use ``point process'' as we do not need higher multiplicity points in the present work.
    \item $\Xi$-marked point processes are defined similarly, with $\Xi^\MM$ taking the place of $\MM$. There isn't much difference between marked point processes and unmarked ones for our purposes (it's just a case of which is more convenient for the particular problem at hand). Thus ``point process'' might also mean ``marked point process''. This will also be reflected in definitions: if a concept is defined for point processes (and uses the symbol $\MM$), then it will also apply for marked point processes (using the symbols $\Xi^\MM$). 
    \item One could certainly define point processes on a discrete group, but this is better known as percolation theory. We are trying to move beyond that, so we will almost always implicitly assume $G$ is nondiscrete. 
    \item Another case of interest we will discuss  in a concurrently appearing work with the author and Mikl\'{o}s Ab\'{e}rt is $\Isom(S)$-invariant point processes on $S$, where $S$ is a Riemannian symmetric space. For instance, one would consider isometry invariant point processes on Euclidean space $\RR^n$ or hyperbolic space $\HH^n$. The general theory we introduce in this paper carries over to that context, and will be discussed in the other paper.
    \item Our interest in point processes is almost exclusively \emph{as actions}. We will therefore rarely distinguish between a point process proper and its distribution. Thus we may use expressions like ``suppose $\mu$ is a point process'' to mean ``suppose $\mu$ is the distribution of some point process''.
\end{itemize}

\begin{defn}
    
    The \emph{intensity} of a point process $\mu$ is
    \[
        \intensity(\mu) = \frac{1}{\lambda(U)} \EE_\mu \left[ N_U \right],
    \]
    where $U \subset G$ is any Borel set of positive (but finite) Haar measure, and $N_U(\omega) = \abs{\omega \cap U}$ is its point counting function.
\end{defn}

To see that the intensity is well-defined (that is, does not depend on our choice of $U$), observe that the function $U \mapsto \EE_\mu[N_U]$ defines a Borel measure on $G$ which inherits invariance from the shift invariance of $\mu$. So by uniqueness of Haar measure, it is some scaling of our fixed Haar measure $\lambda$ -- the intensity is exactly this multiplier. We also see that whilst the intensity depends on our choice of Haar measure, it scales linearly with it. \emph{We will almost exclusively concern ourselves with point processes of finite intensity.}

Note that a point process has intensity zero if and only if it is empty almost surely.

\subsection{Examples of point processes}

\begin{example}[Lattice shifts]

Let $\Gamma < G$ be a \emph{lattice}, that is, a discrete subgroup that admits an invariant probability measure $\nu$ for the action $G \acts G / \Gamma$. The natural map $\MM(G / \Gamma) \to \MM(G)$ given by
\[
    \omega \mapsto \bigcup_{a\Gamma \in \omega} a\Gamma
\]
is left-equivariant, and hence maps invariant point processes on $G / \Gamma$ to invariant point processes on $G$. In particular, we have the \emph{lattice shift}, given by choosing a $\nu$-random point $a\Gamma$.

\end{example}

\begin{example}[\textbf{Induction from a lattice}]
Now suppose one also has a pmp action $\Gamma \acts (X, \mu)$. It is possible to \emph{induce} this to a pmp action of $G$ on $G / \Gamma \times X$. This can be described as an $X$-marked point process on $G$. To do this, fix a fundamental domain $\mathscr{F} \subset G$ for $\Gamma$. Choose $f \in \mathscr{F}$ uniformly at random, and independently choose a $\mu$-random point $x \in X$. Let
	\[
	  \Pi = \{ (f\gamma, \gamma \cdot x) \in G \times X \mid \gamma \in \Gamma \}.  
	\]
	Then $\Pi$ is a $G$-invariant $X$-marked point process.
\end{example}

In this way one can view point processes as generalised lattice shift actions. Note that there are groups without lattices (for instance Neretin's group, see \cite{MR2881324}), but every group admits interesting point processes, as we discuss now. The most fundamental of these is known as \emph{the Poisson point process}. We will define this after reviewing the Poisson distribution:

Recall that a random integer $N$ is \emph{Poisson distributed with parameter $t > 0$} if
\[
\PP[N = k] = \exp(-t)\frac{t^k}{k!}.\]
We write $N \sim \texttt{Pois}(t)$ to denote this. It is convenient to extend this definition to $t = 0$ and $t = \infty$ by declaring $N \sim \texttt{Pois}(0)$ when $N = 0$ almost surely and $N \sim \texttt{Pois}(\infty)$ when $N = \infty$ almost surely.

\begin{defn}
	Let $X$ be a complete and separable metric space equipped with a non-atomic Borel measure $\lambda$.
	
	A point process $\Pi$ on $X$ is \emph{Poisson with intensity $t > 0$} if it satisfies the following two properties:
    	\begin{description}
        	\item[(Poisson point counts)] for all $U \subseteq G$ Borel, $N_U(\Pi)$ is a Poisson distributed random variable with parameter $t \lambda(U)$, and
            \item[(Total independence)] for all $U, V \subseteq G$ \emph{disjoint} Borel sets, the random variables $N_U(\Pi)$ and $N_V(\Pi)$ are \emph{independent}.
        \end{description}
\end{defn}

For reasons that should not be immediately apparent, both of the above defining properties are equivalent. We will write $\PPP_t$ for the distribution of such a random variable, or simply $\PPP$ if the intensity is understood. 

We think of the Poisson point process as a completely random scattering of points in the group. It is an analogue of Bernoulli site percolation for a continuous space.

We now construct the process (somewhat) explicitly. Partition $G$ into disjoint Borel sets $U_1, U_2, \ldots$ of positive but finite volume. For each of these, independently sample from a Poisson distribution with parameter $t \lambda(U_i)$. Place that number of points in the corresponding $U_i$ (independently and uniformly at random).

This description can be turned into an explicit sampling rule\footnote{That is, one can define a measurable function $f : \prod_n X_n \to \MM$ defined on an appropriate product of probability spaces such that the pushforward measure is the distribution of the Poisson point process.}, if one desires.

For proofs of basic properties of the Poisson point process (such as the fact that it does not depend on the partition chosen above), see the first five chapters of Kingman's book \cite{MR1207584}. 
\begin{defn}

A pmp action $G \acts (X, \mu)$ is \emph{ergodic} if for every $G$-invariant measurable subset $A \subseteq X$, we have $\mu(A) = 0$ or $\mu(A) = 1$.

The action is \emph{mixing} if for all measurable $A, B \subseteq (X, \mu)$ we have
\[
    \lim_{g \to \infty} \mu(gA \cap B) = \mu(A)\mu(B).
\]
The action is \emph{essentially free} if $\stab_G(x) = \{1\}$ for $\mu$ almost every $x \in X$. In the case of point process actions we will sometimes use the term \emph{aperiodic} to refer to this.

\end{defn}

\begin{prop}
	The Poisson point process actions $G \acts (\MM, \PPP_t)$ on a noncompact group $G$ are essentially free and ergodic (in fact, mixing).
\end{prop}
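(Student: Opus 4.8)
The plan is to establish mixing directly from the two defining properties of the Poisson point process (Poisson point counts and total independence), and then deduce ergodicity and essential freeness as consequences. The key observation is that events of the form $\{N_{U_1} = k_1, \dots, N_{U_n} = k_n\}$ for $U_1, \dots, U_n$ disjoint and relatively compact generate the Borel $\sigma$-algebra on $\MM$; call these \emph{cylinder events}. By a standard approximation argument (the collection of measurable sets $A$ for which the mixing relation holds with all $B$ in a generating algebra is a monotone class), it suffices to verify $\lim_{g \to \infty} \PPP_t(gA \cap B) = \PPP_t(A)\PPP_t(B)$ when $A$ and $B$ are cylinder events.

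First I would reduce to cylinders supported on disjoint relatively compact sets: if $A$ depends only on the configuration inside a compact set $K_A$ and $B$ only inside $K_B$, then since $d$ is a proper metric and $G$ is noncompact, for all $g$ outside a compact subset of $G$ the translate $gK_A$ is disjoint from $K_B$. For such $g$, the event $gA$ depends on the configuration inside $gK_A$ and $B$ on the configuration inside $K_B$; these regions being disjoint, total independence gives $\PPP_t(gA \cap B) = \PPP_t(gA)\PPP_t(B)$ exactly (not just in the limit). Finally, invariance of $\PPP_t$ gives $\PPP_t(gA) = \PPP_t(A)$, so in fact $\PPP_t(gA \cap B) = \PPP_t(A)\PPP_t(B)$ for all sufficiently large $g$, which is even stronger than the asymptotic statement. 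This proves mixing, and ergodicity is immediate (take $A = B$ invariant: $\mu(A) = \mu(A)^2$).

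For essential freeness, I would argue that for noncompact $G$ and any $g \neq 1$, the set of configurations $\omega$ with $g\omega = \omega$ has $\PPP_t$-measure zero. Fix $g \neq 1$; by properness of $d$ there is a relatively compact set $U$ with $gU \cap U = \emptyset$ (translate a small ball far enough, or use that $g$ acts properly). Then $g\omega = \omega$ forces $N_U(\omega) = N_{gU}(\omega) \cdot (\text{matching of points})$; more carefully, if $g\omega = \omega$ then $\omega \cap U$ and $\omega \cap gU = g(\omega \cap U)$ are related, and iterating $g^n U$ for $n \geq 0$ (which can be taken pairwise disjoint after passing to a subsequence if $g$ has infinite order, or are a finite cycle if finite order) one finds that all the counts $N_{g^n U}(\omega)$ are equal. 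Total independence makes these counts i.i.d.\ $\texttt{Pois}(t\lambda(U))$, and infinitely many i.i.d.\ nondegenerate random variables are almost surely not all equal — contradiction — while if $g$ has finite order one uses instead that a Poisson random variable is almost surely not matched by an independent copy under a fixed-point-free permutation of a genuinely nonatomic placement. A countable union over $g$ in a countable dense subgroup, together with continuity of the stabilizer, then gives essential freeness.

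The main obstacle is the essential freeness argument when $g$ has finite order, since then the translates $g^n U$ do not give infinitely many independent counts; one must instead exploit the non-atomicity of $\lambda$ to see that the \emph{positions} of the points (not just their counts) inside $U$ are almost surely not carried to themselves by the fixed-point-free homeomorphism $x \mapsto gx$. Concretely, conditioned on $N_U = k \geq 1$, the $k$ points are i.i.d.\ uniform on $U$ (w.r.t.\ $\lambda$), and the probability that this unordered $k$-tuple is $g$-invariant is zero because $\lambda$ is non-atomic and $gU \cap U = \emptyset$ rules out any point being fixed. The mixing part, by contrast, is essentially automatic from total independence and properness of the metric, so the entire proposition is short once this wrinkle is handled.
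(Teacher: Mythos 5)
The paper does not prove this proposition itself---it only cites Proposition 2.7 of \cite{MR3664810} for freeness and Proposition 7.3 of \cite{MR3616205} for ergodicity and mixing---so there is no in-paper argument to compare with. Your mixing argument is correct and is the standard one: approximate arbitrary measurable $A,B$ in $L^1$ by events depending on the configuration inside compact sets (the approximation is uniform in $g$ by invariance), and for such events total independence together with properness of $d$ gives \emph{exact} independence once $g$ lies outside a compact set. Ergodicity then follows.

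Your essential freeness argument has genuine gaps. (i) The dichotomy you draw is on the \emph{order} of $g$, but the relevant dichotomy is on compactness of $\overline{\langle g\rangle}$: an infinite-order element whose powers are precompact (e.g.\ $(0,\alpha)\in\RR\times S^1$ with $\alpha$ irrational, with $G=\RR\times S^1$ noncompact) admits no subsequence of pairwise disjoint translates $g^{n_k}U$, so the i.i.d.-count argument does not apply. (ii) In the fallback case, conditioning on $N_U=k\geq 1$ does give probability zero, but the event $N_U=0$ has probability $e^{-t\lambda(U)}>0$ and is not addressed, so the bound you obtain is only $\PP[g\omega=\omega]\leq e^{-t\lambda(U)}<1$, not $0$; closing this requires infinitely many regions $U_n$ with $\sum_n\lambda(U_n)=\infty$ and $U_n,gU_n$ all pairwise disjoint, whose existence is not obvious when $d$ is only \emph{left}-invariant (since $d(x,gx)=d(e,x^{-1}gx)$ is governed by a conjugate of $g$ and can be small for $x$ far from $e$). (iii) Most seriously, the passage from ``$\PP[g\omega=\omega]=0$ for each fixed $g\neq e$'' to essential freeness via a countable dense subgroup and ``continuity of the stabilizer'' does not work: $\omega\mapsto\stab_G(\omega)$ is not continuous, and a nontrivial closed subgroup can have trivial intersection with any prescribed countable dense set (e.g.\ $\sqrt{2}\,\ZZ\subset\RR$ meets $\QQ$ only in $0$). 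The Poisson-specific way to repair all of this at once is the Mecke (reduced Palm) formula: for fixed $g\neq e$, one has $\EE[\#\{x\in\omega : gx\in\omega\}] = t\int_G\PP[gx\in\omega]\,d\lambda(x) = 0$, since conditioned on $x\in\omega$ the remaining process is still Poisson and hence a.s.\ misses the fixed point $gx\neq x$; therefore a.s.\ $g\omega\cap\omega=\empt$, so $g\omega\neq\omega$ on the a.s.\ event $\omega\neq\empt$. For freeness simultaneously over all $g$, use that $\stab_G(\omega)\subseteq\omega\omega^{-1}$ whenever $\omega\neq\empt$ and apply the second-order Mecke formula to $\EE[\#\{(x,y)\in\omega^2 : x\neq y,\ (yx^{-1})\omega=\omega\}]$, which vanishes because a.s.\ the event inside forces $(yx^{-1})\omega=\omega$, of probability zero by the first-order computation; this yields $\stab_G(\omega)=\{e\}$ a.s.\ without any dichotomy on $g$ and without a countable union over group elements.
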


A proof of freeness that is readily adaptable to our setting can be found as Proposition 2.7 of \cite{MR3664810}. For ergodicity and mixing, see the proof of the discrete case in Proposition 7.3 of the Lyons-Peres book \cite{MR3616205}. It directly adapts, once one knows the required cylinder sets exist.

Although the subscript $t$ suggests that the Poisson point processes form a continuum family of actions, this is not always the case:

\begin{thm}[Ornstein-Weiss in \cite{ornstein1987entropy}, see also \cite{soo2019finitary}]
	Let $G$ be an amenable group which is not a countable union of compact subgroups. Then the Poisson point process actions $G \acts (\MM, \PPP_t)$ are all isomorphic.
\end{thm}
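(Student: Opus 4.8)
The plan is to derive the theorem from the \emph{general} Ornstein--Weiss isomorphism theorem for pmp actions of amenable groups, in the strong form: two free, ergodic, \emph{Bernoulli} pmp actions of an amenable lcsc group that is not a countable union of compact subgroups, having the same entropy, are isomorphic. (This general theorem, the amenable-group extension of Ornstein's isomorphism theorem, is what \cite{ornstein1987entropy} establishes; the statement above about Poisson processes is then a corollary, not a restatement.) Since the point process actions $G \acts (\MM, \PPP_t)$ are free, ergodic and in fact mixing by the proposition above, it is enough to verify two things: that each $\PPP_t$ is a Bernoulli $G$-action, and that the $\PPP_t$ all have the same entropy --- and we shall see that each in fact has \emph{infinite} entropy, which makes the second point automatic.

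\emph{Bernoullicity.} Up to the harmless rescaling $\lambda \mapsto t\lambda$ of Haar measure, $\PPP_t$ is by construction the Poisson suspension of the translation action $G \acts (G, t\lambda)$ --- the law of a Poisson random measure on $G$ with intensity $t\lambda$, carrying the translation $G$-action. This is the natural Bernoulli action of $G$ over a non-atomic base: for $G$ discrete it is literally $\bigotimes_{g \in G}\texttt{Pois}(1)$, and for amenable $G$ one must check --- this is the substantive step --- that it is Bernoulli in the sense demanded by Ornstein--Weiss. One route: the explicit sampling rule described above exhibits $\PPP_t$ as a factor of an i.i.d.\ process, while the defining spatial independence of the Poisson process (restrictions to disjoint Borel sets are independent) makes it finitely determined, equivalently very weakly Bernoulli, along any F\o lner exhaustion; and a very weakly Bernoulli factor of a Bernoulli action is Bernoulli.

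\emph{Entropy.} Because $G$ is nondiscrete (our standing assumption, and one that is genuinely needed here --- for discrete $G$ such as $\ZZ$ the entropy of $\PPP_t$ is finite and does depend on $t$), the base measure $t\lambda$ is non-atomic, and a Bernoulli $G$-action over a non-atomic base has infinite entropy, exactly as a classical Bernoulli shift with non-atomic base space does. Hence $h(\PPP_t) = \infty$ for all $t > 0$; the $\PPP_t$ are thus Bernoulli $G$-actions of one and the same entropy, and the Ornstein--Weiss isomorphism theorem yields $\PPP_s \cong \PPP_t$ for all $s, t > 0$. With additional work, following \cite{soo2019finitary}, the isomorphism can even be taken finitary.

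\textbf{Main obstacle.} The real content, and the hard part, is the Bernoullicity step: transporting Ornstein's isomorphism machinery to free pmp actions of amenable \emph{lcsc} groups, possibly totally disconnected --- which is precisely where the hypothesis that $G$ is not a countable union of compact subgroups enters, since it is what makes the relevant Ornstein--Weiss Rokhlin and tiling lemmas available. The hypothesis is essentially sharp: for compact $G$ (the simplest example of a group that \emph{is} such a countable union) the number of points $N_G(\Pi) \sim \texttt{Pois}(t\lambda(G))$ is already an isomorphism invariant separating the $\PPP_t$, so some restriction on $G$ is unavoidable.
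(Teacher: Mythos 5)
The paper does not prove this theorem; it quotes it directly from Ornstein and Weiss \cite{ornstein1987entropy} (with \cite{soo2019finitary} as a modern reference), so there is no internal proof to compare with. Your sketch is a faithful outline of the argument in those references: realize $\PPP_t$ as the Poisson suspension of the translation action $G \acts (G, t\lambda)$, show this is a Bernoulli $G$-action (which you correctly flag as the substantive step and which is the real content of the Ornstein--Weiss paper, not something that can be dispatched in a line), observe that because $G$ is nondiscrete the base is non-atomic so the Kolmogorov--Sinai/Ornstein--Weiss entropy is infinite and hence intensity-independent, and then invoke the Ornstein--Weiss isomorphism theorem for Bernoulli actions of amenable lcsc groups that are not countable unions of compact subgroups. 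Two small cautions: the phrase ``a very weakly Bernoulli factor of a Bernoulli action is Bernoulli'' mixes two independently sufficient routes (factor-of-Bernoulli-is-Bernoulli, versus directly verifying very weak Bernoullicity along a F\o lner sequence), and you should pick one rather than welding them together; and the entropy claim needs the Ornstein--Weiss entropy theory for nondiscrete amenable groups to even make sense, which is part of what the cited paper sets up. Your remark that the hypothesis on $G$ is essentially sharp --- for compact $G$ the total point count $N_G(\Pi) \sim \texttt{Pois}(t\lambda(G))$ already separates the $\PPP_t$ --- is a correct and useful sanity check, and it also explains the paper's standing assumption that $G$ is noncompact.
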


The following definition uses notation that does not appear in the literature (the object of course does, but there does not appear to be a symbolic representation for it):

\begin{defn}
	If $\Pi$ is a point process, then its \emph{IID version} is the $[0,1]$-marked point process $[0,1]^\Pi$ with the property that conditional on its set of points, its labels are independent and IID $\text{Unif}[0,1]$ distributed. If $\mu$ is the law of $\Pi$, then we will write $[0,1]^\mu$ for the law of $[0,1]^\Pi$.
	
	One can define the IID of a point process over spaces other than $[0,1]$ (for instance, $[n] = \{1, 2, \ldots, n\}$ with the counting measure), but we will only use the full IID.
	
\end{defn}

\begin{remark}

As we've mentioned, $[0,1]$-marked point processes on $G$ are particular examples of point processes on $G \times [0,1]$. One can show (see Theorem 5.6 of \cite{MR3791470}) that the Poisson point process on $G \times [0,1]$ with respect to the product measure $\lambda \otimes \text{Leb}$ is just the IID version of the Poisson point process on $G$, a fact which we will make use of later.

\end{remark}

\begin{prop}

The IID Poisson point process on a noncompact group is ergodic (and in fact mixing).

\end{prop}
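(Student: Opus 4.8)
The plan is to reduce to the unmarked case and then run the standard mixing argument for Poisson point processes. By the Remark preceding this Proposition, the IID version $[0,1]^\Pi$ of the Poisson point process $\Pi$ on $G$ is isomorphic, as a $G$-space, to the Poisson point process on $G \times [0,1]$ with respect to $\lambda \otimes \text{Leb}$, where $G$ acts on the first coordinate only (and trivially on $[0,1]$). It therefore suffices to prove that \emph{this} action is mixing; ergodicity then follows, since if $A$ is $G$-invariant then $gA = A$, so $\mu(A) = \mu(gA \cap A) \to \mu(A)^2$, forcing $\mu(A) \in \{0,1\}$. Although $G \times [0,1]$ is not a group, the proof of the preceding Proposition (mixing of Poisson on a noncompact group) uses only two features, both of which survive here: the total independence property of the Poisson point process, and the fact that $G$ is noncompact.

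In detail, I would argue as follows. Say an event $A \subseteq \MM(G \times [0,1])$ is a \emph{cylinder event over} a Borel set $W \subseteq G \times [0,1]$ if membership of $\omega$ in $A$ depends only on $\omega \cap W$. The cylinder events over the ``slabs'' $K \times [0,1]$ with $K \subseteq G$ compact form an algebra $\mathcal{A}$ which generates the Borel $\sigma$-algebra of $\MM(G \times [0,1])$ — this follows from the way the Borel structure on configuration spaces is defined via the point counting functions $N_U$ in Section \ref{basicdefs}, and is exactly the content of the ``once one knows the required cylinder sets exist'' proviso made earlier. Now suppose $A \in \mathcal{A}$ is a cylinder event over $K \times [0,1]$ and $B \in \mathcal{A}$ is a cylinder event over $L \times [0,1]$, with $K, L \subseteq G$ compact. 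For every $g$ outside the compact set $LK^{-1}$ we have $gK \cap L = \varnothing$, hence $(gK \times [0,1]) \cap (L \times [0,1]) = \varnothing$; the event $gA$ is a cylinder event over $gK \times [0,1]$, so by total independence $gA$ and $B$ are independent, and by invariance of the Poisson measure $\PPP(gA \cap B) = \PPP(gA)\PPP(B) = \PPP(A)\PPP(B)$. Since $G$ is noncompact, as $g \to \infty$ it eventually leaves every compact set, in particular $LK^{-1}$, and we conclude $\lim_{g \to \infty} \PPP(gA \cap B) = \PPP(A)\PPP(B)$ for all $A, B \in \mathcal{A}$.

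To upgrade this from the generating algebra to arbitrary measurable sets, I would use the usual approximation argument. Given measurable $A, B$ and $\e > 0$, choose $A', B' \in \mathcal{A}$ with $\PPP(A \triangle A') < \e$ and $\PPP(B \triangle B') < \e$. Since each $g$ acts by a measure preserving transformation, $\PPP(gA \triangle gA') < \e$ as well, so $\abs{\PPP(gA \cap B) - \PPP(gA' \cap B')} < 2\e$ for every $g$, and likewise $\abs{\PPP(A)\PPP(B) - \PPP(A')\PPP(B')} < 2\e$. Combining these with the previous paragraph gives $\limsup_{g \to \infty} \abs{\PPP(gA \cap B) - \PPP(A)\PPP(B)} \le 4\e$; letting $\e \to 0$ yields mixing, and hence ergodicity.

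The only genuine obstacle is the bookkeeping around the cylinder algebra $\mathcal{A}$: one must verify that cylinder events over compact slabs $K \times [0,1]$ really do generate the Borel structure of $\MM(G \times [0,1])$ and that $\mathcal{A}$ can be taken to be an algebra, so that the approximation step is legitimate. This is routine given the description of the Borel structure in Section \ref{basicdefs}, but it is precisely the point that the earlier ``it directly adapts'' remark leaves implicit, and it is where I would be most careful. Everything else is a verbatim transcription of the unmarked mixing argument.
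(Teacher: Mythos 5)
Your proof is correct, but it takes a genuinely different route from the paper. The paper's own proof is a one-liner: it views the IID Poisson on $G$ as the Poisson point process on $G \times S^1$ rather than $G \times [0,1]$ --- the crucial point being that $S^1$, unlike $[0,1]$, is a compact \emph{group} with a normalised Haar (Lebesgue) measure, so $G \times S^1$ is again a noncompact lcsc group and the preceding proposition applies verbatim to give a mixing action of $G \times S^1$ on $\MM(G \times S^1)$. Mixing is then inherited by the noncompact subgroup $G \times \{1\}$, which finishes the argument with no further computation. You instead stay with $G \times [0,1]$, correctly observe that it is not a group, and get around that by reproving the cylinder-algebra mixing argument from scratch for the $G$-action on $\MM(G \times [0,1])$, exploiting total independence over disjoint slabs $K \times [0,1]$ and $L \times [0,1]$ together with the approximation step for the generating algebra. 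What the paper's route buys is brevity: it reuses the earlier proposition as a black box and the one fact that restriction of a mixing action to a noncompact closed subgroup remains mixing, thereby skipping the cylinder bookkeeping you are (rightly) careful about. What your route buys is self-containedness: you do not need to observe that $[0,1]$ can be traded for a compact group, and your argument makes explicit the two ingredients (total independence and noncompactness) that the earlier ``it directly adapts'' remark leaves implicit. Both arguments are sound.
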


This can be seen by viewing the IID Poisson on $G$ as the Poisson point process on $G \times S^1$, restricted to $G$. Note that the restriction of a mixing action to a noncompact subgroup is mixing.

\begin{remark}

One can define ``the IID'' of any probability measure preserving countable Borel equivalence relation, see \cite{MR3813200}. This construction is known as \emph{the Bernoulli extension}, and is ergodic if the base space is ergodic. 

\end{remark}

\begin{prop}

Let $\Pi$ be a point process on a group $G$ which is non-empty almost surely. Then $\abs{\Pi} = \infty$ almost surely if and only if $G$ is noncompact.

\end{prop}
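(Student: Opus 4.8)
The plan is to handle the two implications separately, with essentially all of the content in the direction ``$G$ noncompact $\Rightarrow \abs{\Pi} = \infty$ a.s.'' Throughout, recall that ``point process'' here means \emph{invariant} point process, so $\mu$ is shift-invariant; this hypothesis is essential (on $\RR$ the deterministic process $\{e\}$ is non-empty a.s.\ and finite).

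For the easy direction, suppose $G$ is compact. Then a closed discrete (equivalently, locally finite) subset of the compact space $G$ is finite, so $\abs{\Pi} < \infty$ surely; combined with the hypothesis that $\Pi$ is non-empty almost surely this gives $1 \le \abs{\Pi} < \infty$ a.s., so it is \emph{not} the case that $\abs{\Pi} = \infty$ a.s. This is the contrapositive of ``$\abs{\Pi} = \infty$ a.s. $\Rightarrow G$ noncompact''.

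For the main direction, assume $G$ is noncompact, so that its Haar measure satisfies $\lambda(G) = \infty$. I will show $\PP[\abs{\Pi} = n] = 0$ for every finite $n \ge 1$; together with the hypothesis $\PP[\abs{\Pi} = 0] = 0$ this forces $\abs{\Pi} = \infty$ a.s. Fix $n \ge 1$, let $E_n = \{\omega \in \MM : \abs{\omega} = n\}$, and note $E_n$ is $G$-invariant. Exactly as in the proof that intensity is well defined, the set function $U \mapsto \int_{E_n} N_U\, d\mu$ is a Borel measure on $G$ that is left-invariant — here one uses both the shift-invariance of $\mu$ \emph{and} the invariance of $E_n$ — hence equals $c_n \lambda$ for some $c_n \in [0,\infty]$; moreover $c_n < \infty$, since for $U$ of finite positive Haar measure, $c_n \lambda(U) = \int_{E_n} N_U\, d\mu \le n$. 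Choosing an increasing sequence $U_1 \subseteq U_2 \subseteq \cdots$ of finite-measure Borel sets with $\bigcup_k U_k = G$, monotone convergence gives
\[
  c_n \lambda(G) = \lim_{k \to \infty} c_n \lambda(U_k) = \lim_{k \to \infty} \int_{E_n} N_{U_k}\, d\mu = \int_{E_n} \abs{\omega}\, d\mu(\omega) = n\,\PP[\abs{\Pi} = n] < \infty.
\]
Since $\lambda(G) = \infty$, this forces $c_n = 0$, i.e.\ $\int_{E_n} N_U\, d\mu = 0$ for every Borel $U \subseteq G$. Letting $U$ range over a countable base $\{U_m\}$ of nonempty open sets of $G$ and summing, $\sum_m N_{U_m}(\omega) = 0$ for $\mu$-a.e.\ $\omega \in E_n$, i.e.\ such $\omega$ meets no nonempty open set, i.e.\ $\omega = \empt$ — impossible on $E_n$ unless $\mu(E_n) = 0$.

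Alternatively, one may simply condition on $E_n$ (legitimate precisely because $E_n$ is invariant, so the normalised restriction of $\mu$ to $E_n$ is again the law of an invariant point process), observe its intensity is $c_n/\mu(E_n) = 0$, and quote the stated fact that a point process of intensity zero is empty almost surely, contradicting $\abs{\Pi} = n \ge 1$ on $E_n$. I do not anticipate a genuine obstacle here; the one point to get right is to reduce to a \emph{fixed} finite cardinality $n$ before running the intensity computation, since without an ergodicity assumption the invariant event $\{\abs{\Pi} < \infty\}$ need not have probability $0$ or $1$, and ``$\EE[\abs{\Pi}] = \infty$'' alone would not be enough.
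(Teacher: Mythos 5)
Your proof is correct, but it takes a genuinely different route from the paper. The paper assumes (toward the contrapositive) that $\Pi$ is non-empty and finite a.s., passes to the IID version $[0,1]^\Pi$ to break ties, and selects the unique point with maximal label; this produces an equivariant $G$-valued map whose law is an invariant \emph{probability} measure on $G$, and such a measure exists if and only if $G$ is compact. Your argument avoids the IID construction entirely: it restricts attention to the invariant event $E_n = \{\abs{\omega} = n\}$, observes that $U \mapsto \int_{E_n} N_U\, d\mu$ is a left-invariant Borel measure, invokes uniqueness of Haar measure to write it as $c_n \lambda$, and uses the bound $N_U \le n$ on $E_n$ together with $\lambda(G) = \infty$ to force $c_n = 0$ and hence $\mu(E_n) = 0$. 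Your approach is more self-contained (it only uses the same ingredients already used to show intensity is well defined) and is explicitly careful about non-ergodicity — your remark that one must work with a fixed $n$, rather than with $\{\abs{\Pi} < \infty\}$, is a good one, since the latter event need not be null or full. The paper's argument is shorter and more conceptual, but as stated it assumes $\Pi$ is finite a.s. rather than only with positive probability; to make it a clean contrapositive one would condition on the invariant event $\{\abs{\Pi} < \infty\}$ first, a step your proof sidesteps. One small streamlining of your write-up: once $c_n = 0$, the equation $\int_{E_n} N_{U_k}\, d\mu = c_n\lambda(U_k) = 0$ already gives $n\,\mu(E_n) = \lim_k \int_{E_n} N_{U_k}\, d\mu = 0$ directly, so the countable-base argument (showing $\omega = \empt$ a.e.\ on $E_n$) is not needed.
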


\begin{proof}
    
    It is immediate that any point process on a compact group must be finite almost surely (as it is a discrete subset of the space).
    
    Now suppose $\Pi$ is a non-empty point process on $G$ which is finite almost surely. Then the IID of this process $[0,1]^\Pi$ still has this property. We define the following $G$-valued random variable:
    \[
        f([0,1]^\Pi) = \text{ the unique } x \in \Pi \text{ with maximal label in } [0,1]^\Pi.
    \]
    The invariance of the point process translates into equivariance of the map $f : [0,1]^\MM \to G$. Therefore the law of this random variable is an invariant \emph{probability} measure on $G$. Such a measure exists exactly when $G$ is compact.
\end{proof}

\subsection{Factors of point processes}

\begin{defn}
    
    A \emph{point process factor map} is a $G$-equivariant and measurable map $\Phi : \MM \to \MM$. If $\mu$ is a point process and $\Phi$ is only defined $\mu$ almost everywhere, then we will call it a \emph{$\mu$ factor map} or a \emph{factor of $\mu$}.
    
    We will be interested in two monotonicity conditions:
    \begin{itemize}
        \item if $\Phi(\omega) \subseteq \omega$ for all $\omega \in \MM$, we will call $\Phi$ a \emph{thinning} (and usually denote it by $\theta$), and 
        \item if $\Phi(\omega) \supseteq \omega$ for all $\omega \in \MM$, we will call $\Phi$ a \emph{thickening} (and usually denote it by $\Theta$).
    \end{itemize}
    
    We use the same terms for marked point processes as well. 
    
\end{defn}

\begin{remark}\label{thinningconfusio}

There are \emph{two} possible ways to interpret the above monotonicity conditions for a $\Xi$-marked point process, depending on what you want to do with the mark space. One can consider
\[
    \Phi : \Xi^\MM \to \Xi^\MM, \text{ or } \Phi : \Xi^\MM \to \MM.
\]  
In the former case, the definition above works verbatim. In the latter case, one should interpret a statement like ``$\omega \subseteq \Phi(\omega)$'' as ``$\omega$ is contained in the \emph{underlying set} $\pi(\Phi(\omega))$ of $\Phi(\omega)$, where $\pi : \Xi^\MM \to \MM$ is the map that forgets labels.
\end{remark}

The following example is implicit in the literature, but is not usually named and does not have a consistent symbolic representation. We will use it enough that we must name it:

\begin{example}[Metric thinning]\label{deltathinningdefn}
    
    Let $\delta > 0$ be a tolerance parameter. The \emph{$\delta$-thinning} is the equivariant map $\theta_\delta : \MM \to \MM$ given by
    \[
        \theta^\delta(\omega) = \{ g \in \omega \mid d(g, \omega \setminus \{g\} > \delta \}.
    \]
  
    When $\theta^\delta$ is applied to a point process, the result is always a $\delta$-separated point process (but possibly empty).
  
    We define $\theta^\delta$ in the same way for marked point processes (that is, it simply ignores the marks).
  
\end{example}

\begin{example}[Independent thinning]\label{independentthinning}
Let $\Pi$ be a point process. The \emph{independent $p$-thinning} defined on its IID $[0,1]^\Pi$ is given by
\[
    \mathcal{I}_p([0,1]^\Pi) = \{g \in \Pi \mid \Pi_g \leq p \}.
\]
\end{example}

One can show that independent $p$-thinning of the Poisson point process of intensity $t > 0$ yields the Poisson point process of intensity $pt$, as one would expect. See Section 5.3 of \cite{MR3791470} for further details.

\begin{example}[Constant thickening]\label{constantthickening}
    
    Let $F \subset G$ be a finite set containing the identity $0 \in G$, and $\Pi$ be a point process which is \emph{$F$-separated} in the sense that $\Pi \cap \Pi f = \empt$ for all $f \in F\setminus\{0\}$. Then there is the associated thickening $\Theta^F(\Pi) = \Pi F$. 
    It is intuitively obvious that $\intensity (\Theta^F(\Pi)) = \abs{F} \intensity (\Pi)$. This can be formally established as follows: let $U \subseteq G$ be of unit volume. Then
    \begin{align*}
        \intensity (\Theta^F(\Pi) ) &= \EE[\abs{U \cap \Pi F} ] && \text{by definition}  \\
        &= \sum_{f \in F} \EE[\abs{U \cap \Pi f}] && \text{by }F\text{-separation} \\
        &= \sum_{f \in F} \EE[ \abs{Uf^{-1} \cap \Pi} ] && \\
        &= \sum_{f \in F} \EE[\abs{U \cap \Pi}] && \text{by \emph{unimodularity}} \\
        &= \abs{F} \intensity (\Pi).
    \end{align*}
    This is the first real appearance of our unimodularity assumption. 
    
    In particular, we can demonstrate that $\intensity \Theta^F(\Pi) = \abs{F} \intensity \Pi$ is \emph{not} automatically true without unimodularity. For this, let $\Pi$ denote the unit intensity Poisson point process on $G$, and $F = \{0, f\}$ where $f \in G$ is chosen such that $\lambda(Uf^{-1}) < 1$. Then $\abs{Uf^{-1} \cap \Pi}$ is Poisson distributed with parameter $\lambda(Uf^{-1})$, and so by the above calculation $\intensity\Theta^F(\Pi) < 2 \cdot \intensity \Pi$.
\end{example}

Monotone maps have been investigated in the specific case of the Poisson point process on $\RR^n$. We note the following interesting theorems:

\begin{thm}[Holroyd, Peres, Soo \cite{MR2884878}]
Let $s > t > 0$. Then the Poisson point process on $\RR^n$ of intensity $s$ can be thinned to the Poisson point process of intensity $t$. That is, there exists an equivariant and deterministic map $\theta : (\MM(\RR), \PPP_s) \to (\MM(\RR), \PPP_t)$.

\end{thm}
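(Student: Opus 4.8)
\emph{Reductions.}
First I would cut the problem down to removing an arbitrarily small fraction of the points. Thinnings compose: if $\theta',\theta''$ are thinnings then so is $\omega\mapsto\theta''(\theta'(\omega))$, and a composable chain $\PPP_s\to\PPP_{s'}\to\cdots$ yields a single thinning of $\PPP_s$. Given $s>t$, the ratio $r=t/s\in(0,1)$ can be written as $r=(1-\e)^N$ for a suitable $\e$ as small as we please and $N\in\NN$, so an $N$-fold composition of ratio-$(1-\e)$ thinnings suffices. A dilation $x\mapsto cx$ of $\RR^n$ conjugates equivariant maps to equivariant maps and carries $\PPP_a$ to $\PPP_{a/c^n}$, so we may normalise the intensity. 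Thus it is enough to construct, for all sufficiently small $\e>0$, one deterministic equivariant thinning $\PPP_1\to\PPP_{1-\e}$; the construction I have in mind in fact splits $\PPP_1$ as an independent superposition $\PPP_{1-\e}\sqcup\PPP_\e$ by a factor, which is stronger than needed here.

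\emph{Splitting off an $\e$-fraction.}
Morally one wants an independent $(1-\e)$-thinning in the sense of Example \ref{independentthinning} — a $\text{Ber}(1-\e)$ coin at each point. The obstruction, which is the whole content of the theorem, is that there is no external randomness, and a deterministic factor cannot manufacture it: conditioning on the point set makes any deterministic function of it constant, so in particular the IID version $[0,1]^{\PPP_1}$ is \emph{not} a deterministic factor of $\PPP_1$. My plan is instead to harvest approximately-fair, approximately-independent bits from the fine-scale geometry of the process itself. At a small scale $L>0$, call $\{x,y\}\subseteq\Pi$ a close pair if $\abs{x-y}<L$ and no other point of $\Pi$ lies within $10L$, say; such pairs occur with small but positive density (of order $L^n$), and conditionally on the macroscopic picture the two points are nearly uniform in a small ball, so the direction $(x-y)/\abs{x-y}\in S^{n-1}$ is nearly uniform and nearly independent across pairs, and yields a bit deciding whether to delete $x$. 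This extraction is dimension-free, which is exactly what one needs to push past the essentially renewal-theoretic reasoning available when $n=1$. Tuning $L$ so the induced deletion rate is slightly below $\e$, and arranging that the resulting approximate thinning can be coupled, by a factor, to contain (up to a small discrepancy) an exact $\PPP_{1-\e}$ — monotonicity forbids reinstating a deleted point, so one must always err on the side of deleting too few — I would then mop up the discrepancy at a much finer scale $L'\ll L$, and recurse over scales $L\gg L'\gg L''\gg\cdots$ with the discrepancy (say in total variation on bounded sets) shrinking geometrically. The nested intersection over all scales is then, on the nose, a $\PPP_{1-\e}$.

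\emph{Where the difficulty is.}
The genuinely hard step is this last construction: designing the fine-scale ``decision field'' and the correction recursion so that at each stage the harvested bits are close enough to fair, mutually independent, and nearly independent of the coarser data for the error bound to close, \emph{and} so that every correction is realised purely by additional deletions dominated (via an explicit factor coupling) by the target process, so that the limit is exactly $\PPP_{1-\e}$ rather than merely near it. By contrast, the surrounding bookkeeping — measurability and equivariance of the limiting map, and the stability of the Poisson property (``Poisson point counts'' together with ``total independence'') under the decreasing limits involved — should be routine.
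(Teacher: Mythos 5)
The paper quotes this result from Holroyd--Peres--Soo \cite{MR2884878} without giving a proof, so the comparison has to be with the original argument. Your diagnosis of the central obstacle is correct: there is no external randomness, so any deterministic equivariant thinning must harvest its own randomness from fine-scale geometry, and some multi-scale scheme is forced. Your reduction (compose small thinnings, rescale to unit intensity) is sound but, as you note, not essential: the difficulty is scale-free. Where you part ways with the original is in the extraction mechanism and, more importantly, in the route to exactness, and the latter is where your sketch has a genuine gap.

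Holroyd--Peres--Soo do not run an approximate-then-correct recursion. Their key finitary lemma produces, on a bounded cell, an \emph{exact} Poisson subprocess of the target intensity \emph{together with} a leftover $\mathrm{Unif}[0,1]$ random variable independent of it; this is possible because, conditionally on the point count in the cell, the positions are i.i.d.\ uniform, so the conditional law of the configuration is nonatomic and one can peel off fresh uniform randomness via Borel-isomorphism arguments. The global construction is then a hierarchical tower of cells in which byproduct uniforms supply the randomness needed elsewhere; exactness is maintained at every stage, and nothing ever needs to be mopped up. Your plan, by contrast, deletes points along close-pair bits at scale $L$ and hopes to repair the discrepancy by further deletions at finer scales, subject to the monotonicity constraint. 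That constraint bites structurally, not just technically: after the scale-$L$ pass the survivor process carries a detectable artefact a true Poisson of intensity $1-\e$ does not (a depletion of close pairs near scale $L$, among others), and further \emph{deletions} at scales $L'\ll L$ only introduce more artefacts of the same kind, they cannot erase the one already created. You offer no coupling or domination argument that shows the geometrically shrinking total-variation discrepancy you posit can actually be realised while only ever deleting, and it is not clear one exists along this route. The missing ingredient is precisely the exact-local-splitting-plus-leftover-uniform lemma; without something of that strength, there is no reason the nested intersection over scales should be Poisson on the nose rather than merely near it.
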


\begin{thm}[Gurel-Gurevich and Peled \cite{MR3096589}]
Let $s > t > 0$ be intensities. Then the Poisson point process on $\RR^n$ of intensity $s$ cannot be thickened to the Poisson point process of intensity $t$. That is, there is no equivariant and deterministic map $\Theta : (\MM(\RR), \PPP_s) \to (\MM(\RR), \PPP_t)$.

\end{thm}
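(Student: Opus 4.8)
\textit{Proof proposal.} This is the theorem of Gurel-Gurevich and Peled; I will not attempt to reproduce their argument verbatim, but I will describe a plan in its spirit. Suppose toward a contradiction that $\Theta : (\MM(\RR^n), \PPP_s) \to (\MM(\RR^n), \PPP_t)$ is a deterministic equivariant thickening. Since a thickening only ever adds points, $\intensity(\Theta(\Pi)) \ge \intensity(\Pi)$ always, so $t \ge s$ is forced on the nose; the content is that $t > s$ is impossible as well, so I assume it. Let $\Pi \sim \PPP_s$, put $W := \Theta(\Pi) \sim \PPP_t$, and let $Z := W \setminus \Pi$ be the point process of \emph{added} points. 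Then $Z$ is a deterministic equivariant factor of $\Pi$, it is almost surely disjoint from $\Pi$, and $W = \Pi \sqcup Z$ with $\intensity(Z) = t - s > 0$; in particular $Z$ is not almost surely empty.

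The plan is to extract a contradiction from the rigidity of the Poisson law. The guiding observation is that with extra randomness thickening would be trivial --- superpose onto $\Pi$ an \emph{independent} copy of $\PPP_{t-s}$, and the result is $\PPP_t$ since Laplace functionals of independent superpositions multiply --- so the proof must use \emph{determinism} essentially. The mechanism is this: a stationary point process that is both a $\sigma(\Pi)$-measurable factor of $\Pi$ and independent of $\Pi$ is independent of itself, hence almost surely a fixed configuration, hence (by stationarity) empty. So it is enough to show that $W \sim \PPP_t$ together with $\Pi \subseteq W$ and $\Pi \sim \PPP_s$ forces $Z = W \setminus \Pi$ to be independent of $\Pi$; then $Z = \varnothing$, contradicting $\intensity(Z) = t-s > 0$.

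To see why that independence is forced, I would work first on the line $\RR$, where the Poisson process admits a renewal description: in the Palm version of $\PPP_s$ the successive gaps of $\Pi$ form an i.i.d.\ sequence of $\texttt{Exp}(s)$ variables, and $\Theta$ descends to a shift-equivariant rule that reads this sequence and inserts the points of $Z$ into each gap. Fix one maximal gap $(a,b)$ of $\Pi$. Since $\Pi \cap (a,b) = \varnothing$ we have $W \cap (a,b) = Z \cap (a,b)$, while $a,b \in \Pi \subseteq W$. By the Slivnyak--Mecke theorem, conditioning a Poisson process of intensity $t$ on containing the two points $a,b$ leaves the remainder distributed as $\PPP_t$; carrying this through the full gap decomposition shows that, given the gap structure of $\Pi$, the blocks $Z \cap (a,b)$ across the various gaps must be independent Poisson processes of intensity $t$ on the respective intervals, independent moreover of all the remaining data of $\Pi$. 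But each $Z \cap (a,b)$ is a deterministic function of $\Pi$, and a deterministic function cannot be a conditionally independent fresh Poisson sample unless it is degenerate, i.e.\ empty --- and emptiness of every block contradicts $\intensity(Z) = t-s>0$. For $\RR^n$ with $n \ge 2$ there is no renewal structure, and one instead runs the same idea inside large empty balls of $\Pi$ (these occur at arbitrarily large scales by ergodicity): inside such a ball $W$ coincides with $Z$ and must, conditionally on $\Pi$, look like a fresh $\PPP_t$, again incompatible with $Z$ being a deterministic factor.

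I expect the main obstacle to be making the conditioning rigorous: the event ``$(a,b)$ is exactly a maximal gap of $\Pi$'' has probability zero, so the Slivnyak--Mecke step must be carried out at the level of Palm kernels and disintegrations (or via a finite-volume discretisation and a limit), and one must check that conditioning on the \emph{entire} $\sigma$-algebra of $\Pi$, rather than merely on $\Pi \subseteq W$, does not disturb the conclusion --- this is precisely where the deterministic-factor hypothesis enters. A secondary difficulty is the higher-dimensional case, where the ``large empty ball'' must be organised into a measurable equivariant selection with controlled boundary. Finally, it is worth flagging why the tempting information-theoretic shortcut fails: comparing the discretised entropies of $\Pi$ and of $W = \Theta(\Pi)$ on a box $[0,L]^n$ would yield a contradiction (since $t>s$) \emph{if} $W \cap [0,L]^n$ were determined by $\Pi$ on a bounded enlargement of the box; but a priori $\Theta$ may read a window whose size grows with $L$, so the boundary contributions need not be negligible, and this crude comparison breaks down. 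The argument must genuinely use the structure of the Poisson law.
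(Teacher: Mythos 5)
The paper cites this theorem from Gurel-Gurevich and Peled and gives no proof of its own, so there is no internal argument to compare against; I evaluate your sketch on its own terms. You were right to flag that the inequality in the printed statement is reversed: a thickening can only increase intensity, so with $s > t$ as written the assertion is vacuous, and the intended content is that one cannot deterministically thicken $\PPP_s$ to $\PPP_t$ when $t > s$. Correcting the hypothesis (and the missing exponent in $\MM(\RR^n)$) is the right move.

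The central step of the proposal has a genuine gap. You want to conclude that $Z := W \setminus \Pi$ is independent of $\Pi$, and you try to deduce this from $W \sim \PPP_t$ via Slivnyak--Mecke applied in the gaps of $\Pi$. But Slivnyak--Mecke describes the Palm distribution of $W$, i.e.\ the effect of conditioning $W$ on containing a \emph{prescribed} finite set of points. It says nothing about the law of $W$ conditioned on the $\sigma$-algebra generated by a \emph{correlated random subprocess} $\Pi \subseteq W$ chosen according to some coupling. In your coupling $W = \Theta(\Pi)$ is a deterministic factor of $\Pi$, so conditionally on $\Pi$ every block $Z \cap (a,b)$ is a point mass, not a fresh $\PPP_t$; the assertion ``given $\Pi$, $Z$ in the gaps is conditionally Poisson'' and the assertion ``$Z$ is a deterministic function of $\Pi$'' are mutually contradictory as stated, so the contradiction you derive at the end is internal to the proposal, not a refutation of the hypothesis. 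The conditional independence is precisely the content at issue: it is not forced by the marginals both being Poisson with $\Pi \subseteq W$, and if it were easily forced, the theorem would follow trivially and would not have been worth a paper. Your caveat about ``conditioning on the entire $\sigma$-algebra of $\Pi$ rather than merely on $\Pi \subseteq W$'' touches the heart of the matter, but the deterministic-factor hypothesis does not rescue the conditional-Poisson claim there; it destroys it. Your supporting lemma --- a stationary factor of $\Pi$ that is independent of $\Pi$ is almost surely a fixed shift-invariant discrete set, hence empty --- is correct, but the argument never earns the independence that lemma would need.

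Your discussion of why the naive entropy comparison fails is also on the right track, though there is an additional obstruction beyond the boundary term you mention: the mean entropy of the Poisson flow on $\RR^n$ is infinite, so the crude comparison is vacuous to begin with. In short, the high-level plan (reduce to ``a deterministic factor forced to be independent of its input is trivial'') is a good instinct, but the Slivnyak--Mecke mechanism you propose for establishing the independence does not apply, and the actual Gurel-Gurevich--Peled argument goes by a different and more delicate route.
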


We stress in the above theorems the \emph{deterministic} nature of the maps. If one is allowed additional randomness (that is, one asks for a factor of IID map), then both theorems are easily established. In fact, the IID of \emph{any} point process factors onto the Poisson of arbitrary intensity.

\begin{defn}
    
    A \emph{factor $\Xi$-marking} of a point process is a $G$-equivariant map $\mathscr{C} : \MM \to \Xi^\MM$ such that the underlying subset in $G$ of $\mathscr{C}(\omega)$ is $\omega$. That is, $\mathscr{C}$ is a rule that assigns a mark from $\Xi$ to each point of $\omega$ in some deterministic way. Again, if $\mathscr{C}$ is only defined $\mu$ almost everywhere then we will call it a \emph{$\mu$ factor $\Xi$-marking}.
    
    We will also use the term ``colouring'' for the same thing.
    
\end{defn}

\begin{example}
    
    Let $\theta : \MM \to \MM$ be a thinning. Then the associated $2$-colouring is $\mathscr{C}_\theta : \MM \to \{0, 1\}^\MM$ given by
    \[
        \mathscr{C}_\theta(\omega) = \{ (g, \1_{g \in \theta(\omega)} \in G \times \{0, 1\} \mid g \in \omega \}.
    \]
    We will see that all markings are built out of thinnings in a similar way.
\end{example}

\begin{remark}\label{thinninglost}

There is a difference between the \emph{thinning map $\theta$} and the resulting \emph{thinned process $\theta_*(\mu)$} that can be a source for confusion. Passing to the thinned process (in principle) can lose information about $\mu$.

For example, let $\Pi$ denote a Poisson point process on $G$ and $\Upsilon$ an independent random shift of a lattice $\Gamma < G$. Define the following thinning $\theta : \MM \to \MM$ by
\[
    \theta(\omega) = \{ g \in \omega \mid g\Gamma \subseteq \omega \}.
\]
Then $\theta(\Pi \cup \Upsilon) = \Upsilon$, and so the thinning completely loses the Poisson point process.

\end{remark}

\begin{defn}\label{inputoutputdefn}
Let $\Phi : \MM \to \MM$ be a factor map. We think of its input $\omega$ as being red, its output $\Phi(\omega)$ as being blue, and their overlap $\omega \cap \Phi(\omega)$ as being purple. 

For $g \in \omega$, let $\texttt{Colour}(g) \in \{\text{Red, Blue, Purple}\}$ be
\[ \texttt{Colour}(g) = 
    \begin{cases}
        \text{Red} & \text{ If } g \in \omega \setminus \Phi(\omega), \\
        \text{Blue} & \text{ If } g \in \Phi(\omega)\setminus \omega, \\
        \text{Purple} & \text{ If } g \in \omega \cap \Phi(\omega).
    \end{cases}
\]
Now define $\Theta^\Phi : \MM \to \{\text{Red, Blue, Purple}\}^\MM$ to be the following \emph{input/output thickening} of $\Phi$ defined by
\[
    \Theta^\Phi(\omega) = \{ (g, \texttt{Colour}(g)) \in G \times \text{Red, Blue, Purple}\} \mid g \in \omega \}.
\]

Let $\pi : \{\text{Red, Blue, Purple}\}^\MM \to \MM$ be the projection map that deletes red points and then forgets colours, that is,
\[
    \pi(\omega) = \{ g \in \omega \mid \omega_g \in \{\text{Blue, Purple}\} \}.
\]

\end{defn}

\begin{remark}\label{factorsdecompose}
Observe that $\Phi = \pi \circ \Theta^\Phi$ -- that is, an \emph{arbitrary} factor map decomposes as the composition of a thinning and a thickening. In this way we can often reduce the study of arbitrary factors to that of \emph{monotone} factors.
\end{remark}

\begin{defn}
	The \emph{space of graphs in $G$} is
    \[
    	\graph(G) = \{ (V, E) \in \MM(G) \times \MM(G \times G) \mid E \subseteq V \times V \}.
    \]
    This is a Borel $G$-space (with the diagonal action). 
    
    A \emph{factor graph} is a measurable and $G$-equivariant map $\Phi : \MM(G) \to \graph(G)$ with the property that the vertex set of $\Phi(\omega)$ is $\omega$.
    
    If a factor graph is connected, then we will refer to it as a \emph{graphing}.
    
\end{defn}

\begin{remark}
	The elements of $\graph(G)$ are technically directed graphs, possibly with loops, and without multiple edges between the same pair of vertices. It's possible to define (in a Borel way) whatever space of graphs one desires (coloured, undirected, etc.) by taking appropriate subsets of products of configuration spaces.
\end{remark}

\begin{remark}
	One might prefer to call factor graphs as above \emph{monotone} factor graphs, as they never modify the vertex set. Our terminology follows that of probabilists, see for instance \cite{holroyd2005}. We have not yet found a use for the less restrictive factor graph concept. 
\end{remark}

\begin{example}\label{distanceR}
    
    The \emph{distance-$R$} factor graph is the map $\mathscr{D}_R : \MM \to \graph(G)$ given by
    \[
        \mathscr{D}_R(\omega) = \{ (g, h) \in \omega \times \omega \mid d(g, h) \leq R \}.
    \]
    The connectivity properties of this graph fall under the purview of continuum percolation theory, see for instance \cite{meester1996continuum}.
\end{example}

\section{The rerooting equivalence relation and groupoid}\label{groupoid}

We now introduce a pair of algebraic objects that capture factors of a point process. For exposition's sake, we will first discuss unmarked point processes on a group $G$. It is assumed that the reader is somewhat familiar with the notion of a probability measure preserving (pmp) countable Borel equivalence relation (cber), and has heard the definition of a groupoid (but no more knowledge is required than that). For more information on pmp cbers see \cite{kechris2004topics} and \cite{gaboriau2016around}.

\begin{defn}
    The \emph{space of rooted configurations on $G$} is
    \[
        \MMo(G) = \{ \omega \in \MM(G) \mid 0 \in \omega \}.
    \]
    
    If $G$ is understood, then we will drop it from the notation for clarity.
    
    The \emph{rerooting equivalence relation} on $\MMo$ is the orbit equivalence relation of $G \acts \MM$ restricted to $\MMo$. Explicitly: 
    \[
        \Rel = \{ (\omega, g^{-1}\omega) \in \MMo \times \MMo \mid g \in \omega \}.
    \]
    
    This defines a countable Borel equivalence relation structure on $\MMo$. It is degenerate whenever $\omega \in \MMo$ exhibits symmetries: for instance, the equivalence class of $\ZZ$ viewed as an element of $\MMo(\RR)$ is a singleton. We are usually interested in essentially free actions, where such difficulties will not occur. Nevertheless, we do care about lattice shift point processes and so we will introduce a groupoid structure that keeps track of symmetries.
    
    The \emph{space of birooted configurations} is
    \[
        \Marrow = \{ (\omega, g) \in \MMo \times G \mid g \in \omega \}.
    \]
    
    We visualise an element $(\omega, g) \in \Marrow$ as the rooted configuration $\omega \in \MMo$ with an arrow pointing to $g \in \omega$ from the root (ie, the identity element of $G$).
    
    The above spaces form a \emph{groupoid} $(\MMo, \Marrow)$ which we will refer to as the \emph{rerooting groupoid}. Its unit space is $\MMo$ and its arrow space is $\Marrow$. We can identify $\MMo$ with $\MMo \times \{0\} \subset \Marrow$. 
    
    The multiplication structure is as follows: we declare a pair of birooted configurations $(\omega, g), (\omega', h)$ in $\Marrow$ to be \emph{composable} if $\omega' = g^{-1}\omega$, in which case
    \[
        (\omega, g) \cdot (\omega', h) := (\omega, gh).
    \]
    
    Note that if $\Gamma < G$ is a discrete subgroup (so $\Gamma \in \MMo(G)$), then the above multiplication is just the usual one.
    
    The \emph{source map} $s : \Marrow \to \MMo$ and \emph{target map} $t : \Marrow \to \MMo$ are
    \[
        s(\omega, g) = \omega, \text{ and } t(\omega, g) = g^{-1}\omega.
    \]
\end{defn}

    Note that the rerooting groupoid is \emph{discrete} in the sense that $s^{-1}(\omega)$ is at most countable for all $\omega \in \MMo$. 

\begin{remark}\label{aperiodic}

Let $\Maper_0$ denote the set of rooted configurations $\omega$ that are \emph{aperiodic} in the sense that $\stab_G(\omega) = \{e\}$. Then for all $\omega' \in \MMo$, there is \emph{at most} one $g \in G$ such that $g^{-1}\omega = \omega'$. Groupoids with this property are called \emph{principal}, and their groupoid structure is simply that of an equivalence relation (with a unique arrow between related points, and no arrows between unrelated points). 

Note also that the rerooting equivalence class $[\omega]_\Rel$ of such an aperiodic configuration $\omega$ is naturally \emph{parametrised} by $\omega$ itself: that is, the map
\begin{align*}
    &p_\omega : \omega \to [\omega]_\Rel \\
    &p_\omega(g) = g^{-1}\omega
\end{align*}
is a bijection.
\end{remark}

\begin{defn}

If $\Xi$ is a space of marks, then the \emph{space of $\Xi$-marked rooted configurations} is 
\[
    \Xi^\MMo = \{ \omega \in \Xi^\MM \mid \exists \xi \in \Xi \text{ such that } (0, \xi) \in \omega \}.
\]

The \emph{$\Xi$-marked rerooting groupoid} is defined as previously, with $\Xi^\MMo$ taking the place of $\MMo$.

\end{defn}

\subsection{Borel correspondences between the groupoid and factors}\label{borelcorrespondences}

With the definition of the rerooting groupoid in hand, we are now able to prove the Borel version of Theorem \ref{correspondencetheorem}.

Suppose $\theta : \MM \to \MM$ is an equivariant and measurable thinning. Then we can associate to it a subset of the rerooting groupoid, namely
\[
    A_\theta = \{ \omega \in \MM \mid 0 \in \theta(\omega) \}.
\]
This association has an inverse: given a Borel subset $A \subseteq \MMo$, we can define a thinning $\theta^A : \MM \to \MM$
\[
    \theta^A(\omega) = \{g \in \omega \mid g^{-1}\omega \in A \}.
\]

Thus we see that \emph{Borel subsets $A \subseteq \MMo$ of the rerooting groupoid correspond to Borel thinning maps $\theta : \MM \to \MM$}.

In the $\Xi$-marked case, one associates to a subset $A \subseteq \Xi^\MMo$ a thinning $\theta^A : \Xi^\MM \to \Xi^\MM$. 

In a similar way, we can see that if $P : \MMo \to [d]$ is a Borel partition of $\MMo$ into $d$ classes, then there is an associated factor $[d]$-colouring $\mathscr{C}^P : \MM \to [d]^\MM$ given by
\[
    \mathscr{C}^P(\omega) = \{ (g, P(g^{-1}\omega) \in G \times [d] \mid g \in \omega \},
\]
and given a factor $[d]$-colouring $\mathscr{C} : \MM \to [d]^\MM$ one associates the partition $P^\mathscr{C} : \MMo \to [d]$ given by
\[
    P(\omega) = c, \text{ where } c \text{ is the unique element of } [d] \text{ such that } (0, c) \in \mathscr{C}(\omega).  
\]
Again, these associations are mutual inverses. 

More generally, we see that \emph{Borel factor $\Xi$-markings $\mathscr{C} : \MM \to \Xi^\MM$ correspond to Borel maps $P : \MMo \to \Xi$}.

Now suppose that $\mathscr{G} : \MM \to \graph(G)$ is an equivariant and measurable factor graph. Then we can associate to it a subset of the rerooting groupoid's arrow space, namely
\[
    \mathscr{A}_\mathscr{G} = \{ (\omega, g) \in \Marrow \mid (0,g) \in \mathscr{G}(\omega)\}.
\]
In the other direction, we associate to a subset $\mathscr{A} \subseteq \Marrow$ the factor graph $\mathscr{G}^\mathscr{A} : \MM \to \graph(G)$
\[
    \mathscr{G}^\mathscr{A}(\omega) = \{ (g, h) \in \omega \times \omega \mid (g^{-1}\omega, g^{-1}h) \in \mathscr{A} \}.
\]

Thus we see that \emph{Borel subsets $\mathscr{A} \subseteq \Marrow$ of the rerooting groupoid's arrow space correspond to Borel factor graphs $\mathscr{G} : \MM \to \graph(G)$}.

Note also that the factor graph $\mathscr{G}$ is \emph{connected} for every input $\omega$ if and only if the corresponding subset $\mathscr{A}_\mathscr{G} \subseteq \Marrow$ generates the rerooting groupoid.

\begin{remark}

If $\mu$ is a point process, then the correspondence still works in one direction: namely, we can associate subsets $A \subset \MMo$ (or $\mathscr{A} \subseteq \Marrow$) to $\mu$-thinnings $\theta^A: (\MM, \mu) \to \MM$ (or $\mu$-factor graphs $\mathscr{G}_\mathscr{A}: (\MM, \mu) \to \MM$ respectively). 

We run into trouble in the other direction: suppose $\theta : \MM \to \MM$ is a thinning, but only defined $\mu$ almost everywhere. We wish to restrict it to $\MMo$, but a priori this makes no sense -- that is a subset of measure zero. It turns out that there is a way to make sense of this due to equivariance, but it will require some more theory that we explain in the next section.

\end{remark}
\subsection{The Palm measure}

We will now associate to a (finite intensity) point process $\mu$ a probability measure $\mu_0$ defined on the rerooting groupoid $\MMo$. When the ambient space is unimodular, this will turn the rerooting groupoid into a \emph{probability measure preserving (pmp) discrete groupoid}.

Informally, the Palm measure of a point process $\Pi$ is the process conditioned to contain the root. A priori this makes no sense (the subset $\MMo$ has probability zero), but there is an obvious way one could interpret the statement: condition on the process to contain a point in an $\e$ ball about the root, and take the limit as $\e$ goes to zero. See Theorem 13.3.IV of \cite{daley2007introduction} and Section 9.3 of \cite{MR3791470} for further details.

We will instead take the following concept of \emph{relative rates} as our basic definition:

\begin{defn}
	Let $\Pi$ be a point process of finite intensity with law $\mu$. Its (normalised) \emph{Palm measure} is the probability measure $\mu_0$ defined on Borel subsets of $\MMo$ by
    \[
    	\mu_0(A) := \frac{\intensity(\theta^A(\Pi))}{\intensity(\Pi)},
    \]
    where $\theta^A$ is the thinning associated to $A \subseteq \MMo$. 
    
    More explicitly,
    \[
    	\mu_0(A) := \frac{1}{\intensity(\mu)} \EE_\mu \left[ \#\{g \in U \mid g^{-1}\omega \in A \} \right],
    \]
    where $U \subseteq G$ is of unit volume.

We also define the Palm measure of a $\Xi$-marked point process similarly, with $\Xi^\MMo$ taking the place of $\MMo$.

A \emph{Palm version} of $\Pi$ is any random variable $\Pi_0$ with law $\mu_0$. That is, if for all Borel $B \subseteq \MMo$ we have
\[
    \PP[\Pi_0 \in B] = \mu_0(B).
\]

\end{defn}

We now describe some \emph{Palm calculus} -- that is, how the operation of ``take the Palm measure of'' behaves with respect to various factor operations.  This is just to build intuition for readers unfamiliar with the Palm measure. The proofs follow from an elementary symbolic manipulation of the definitions, so we omit them in the present work, and they will appear in a concurrent work with the author and Mikl\'{o}s Ab\'{e}rt.

\begin{example}[Forgetting labels]
If $\Pi$ is a \emph{labelled} point process, then the Palm measure of $\Pi$ \emph{after} we forget the labels is the same thing as forgetting the labels from the Palm measure $\Pi_0$.

More explicitly, if $\pi : \Xi^\MM \to \MM$ is the map that forgets labels, then $\pi(\Pi)_0$ has the same distribution as $\pi(\Pi_0)$.
\end{example}

\begin{example}[Lattice actions]
	If $\Gamma < G$ is a lattice, then the Palm measure of the associated lattice shift is just $\delta_\Gamma$ -- that is, the atomic measure on $\Gamma \in \MMo(G)$. 
    More generally, if $\Gamma \acts (X, \mu)$ is a pmp action, then the Palm measure of the associated induced $X$-marked point process is its \emph{symbolic dynamics}. That is, the map $\Sigma : (X, \mu) \to X^\MM$ given by
    \[
        \Sigma(x) = \{ (\gamma, \gamma^{-1} \cdot x) \in G \times X \mid \gamma \in \Gamma \}.
    \]
    pushes forward $\mu$ to the Palm measure. In words, you sample a $\mu$-random point $x \in X$ and track its orbit under $\Gamma$ (the inverse is an artefact of our left bias).
\end{example}

\begin{remark}

Suppose $\Pi$ is a finite intensity point process such that its Palm version is an atomic measure, say $\Pi_0 = \Omega$ almost surely where $\Omega \in \MMo$. Then $\Omega$ is a lattice in $G$. Note that $\Omega$ is automatically a discrete subset of $G$, and a simple mass transport argument shows that it is a subgroup. The covolume of this subgroup is the reciprocal of the intensity of $\Pi$.

\end{remark}

\begin{example}[Mecke-Slivnyak Theorem]\label{palmofpoisson}
    
    If $\Pi$ is a Poisson point process, then its Palm measure has the same law as $\Pi \cup \{0\}$, where $0 \in G$ is the identity. 
    
    In fact, this is a \emph{characterisation} of the Poisson point process: if the Palm measure of $\mu$ is obtained by simply adding the root\footnote{More formally, consider the map $F : \MM \to \MMo$ given by $F(\omega) = \omega \cup \{0\}$, by ``adding the root'' we mean the Palm measure $\mu_0$ is the pushforward $F_*\mu$.}, then $\mu$ is the Poisson point process (of some intensity).
    
\end{example}

The proof of the above fact can be found in Section 9.2 of \cite{MR3791470}. As a consequence, the Palm measure of the IID Poisson is the IID of the Palm measure of the Poisson itself. 

\begin{example}[Thinnings]
    The Palm version $\theta(\Pi)_0$ of a thinning $\theta = \theta^A$ of $\Pi$ (determined by a subset $A \subseteq \MMo$) is described in terms of its Palm version $\Pi_0$ as a conditional probability as follows:
    \[
        \PP[\theta(\Pi)_0 \in \bullet] = \PP[\theta(\Pi_0) \in \bullet \mid \Pi_0 \in A]
    \]
    for any $B \subseteq \MMo$.
    
    That is, the Palm measure $\theta(\Pi)_0$ can be obtained by sampling from $\Pi_0$ conditioned that the root is retained in the thinning, and then applying the thinning.
\end{example}

\begin{example}[Colourings]\label{palmofcolouring}
    The Palm version $\mathscr{C}(\Pi)_0$ of a $2$-colouring $\mathscr{C} : \MM \to \{0,1\}^\MM$ is simply $\mathscr{C}(\Pi_0)$.

\end{example}

\begin{example}\label{palmofthickening}
    
    Let $\Theta = \Theta^F$ be a constant thickening determined by $F \subset G$, as described in Example \ref{constantthickening}. If $\Pi$ is an $F$-separated process, then the Palm version $\Theta(\Pi)_0$ of the thickening $\Theta(\Pi)$ is as follows: sample from $\Pi_0$, and independently choose to root $\Theta(\Pi_0)$ at a uniformly chosen element $X$ of $F$. That is, $\Theta(\Pi)_0 \overset{d}{=} X^{-1} \Theta(\Pi_0)$.
    
    To see this, we compute\footnote{When we define the Palm measure of a set $B \subseteq \MMo$, we usually write ``$g \in U$'' rather than ``$g \in U \cap \Pi$'', as the latter condition $g^{-1} \Pi \in B$ already implies $g \in \Pi$. For this computation it is better to really spell it out though.} as follows:
    
    \begin{align*}
        &\PP[\Theta(\Pi)_0 \in B] = \frac{1}{\intensity \Theta(\Pi)} \EE[ \#\{g \in U \cap \Pi F \mid g^{-1} \Theta(\Pi) \in B \} ] && \text{By definition} \\
        &= \frac{1}{\abs{F}} \frac{1}{\intensity \mu} \sum_{f \in F} \EE[ \#\{g \in U \cap \Pi f \mid g^{-1} \Theta(\Pi) \in B \} ] && \text{By Example \ref{constantthickening}} \\
         &= \frac{1}{\abs{F}} \frac{1}{\intensity \mu} \sum_{f \in F} \EE[ \#\{g \in Uf^{-1} \cap \Pi \mid g^{-1} \Pi \in \Theta^{-1}(B) \} ] && \text{By equivariance} \\
         &= \frac{1}{\abs{F}} \frac{1}{\intensity \mu} \sum_{f \in F} \EE[ \#\{g \in U \cap \Pi \mid g^{-1} \Pi \in \Theta^{-1}(B) \} ] && \text{By unimodularity} \\
         &= \frac{1}{\abs{F}} \sum_{f \in F} \PP[ \Pi_0 \in \Theta^{-1}(B)] && \text{By definition} \\
         &= \frac{1}{\abs{F}} \sum_{f \in F} \PP[ \Theta(\Pi_0) \in B] && \\
         &= \PP[X^{-1} \Theta(\Pi_0) \in B].
    \end{align*}
    
\end{example}

\begin{remark}

A similar formula holds for \emph{arbitrary} thickenings, however one must size-bias in an appropriate way.

\end{remark}

The Palm measure has an associated integral equation. One writes
\[
(\lambda \otimes \mu_0)(U \times A) = \int_G \EE_0[\1_{U \times A}] d\lambda(x)\]
and then invokes the usual voodoo to extend a statement about measurable sets to one about measurable functions. We follow the terminology of \cite{MR3791470} by referring to the resulting formula as ``the CLMM'', and use it to prove the Mass Transport Principle:

\begin{thm}[Campbell-Little-Mecke-Matthes]\label{CLMM}
	Let $\mu$ be a finite intensity point process on $G$ with Palm measure $\mu_0$. Write $\EE$ and $\EE_0$ for the associated integral operators.
    
    If $f : G \times \MMo \to \RR_{\geq 0}$ is a measurable function (\emph{not} necessarily invariant in any way), then
    \[
    	\EE \left[\sum_{x \in \omega} f(x, x^{-1}\omega) \right] = \intensity(\mu) \EE_0 \left[ \int_G   f(x, \omega)  d\lambda(x)\right].
    \]
\end{thm}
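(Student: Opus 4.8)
The plan is to deduce the Campbell--Little--Mecke--Matthes formula directly from the definition of the Palm measure via the standard measure-theoretic bootstrapping argument. The key point is that the definition of $\mu_0$ already encodes the case where $f$ is an indicator of a product set $U \times A$ with $U \subseteq G$ Borel of finite volume and $A \subseteq \MMo$ Borel: unwinding $\mu_0(A) = \frac{1}{\intensity(\mu)} \EE_\mu[\#\{g \in V \mid g^{-1}\omega \in A\}]$ for $V$ of unit volume, and using translation invariance of $\mu$ together with unimodularity (exactly as in the intensity computation in Example~\ref{constantthickening}), one gets
\[
	\EE_\mu\left[\sum_{x \in \omega} \1_U(x)\1_A(x^{-1}\omega)\right] = \lambda(U)\,\intensity(\mu)\,\mu_0(A) = \intensity(\mu)\,\EE_0\left[\int_G \1_U(x)\1_A(\omega)\,d\lambda(x)\right].
\]
So the identity holds for $f = \1_{U \times A}$.

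Next I would upgrade from product indicators to general nonnegative measurable $f : G \times \MMo \to \RR_{\geq 0}$ by the usual three-step argument. First, the collection of Borel sets $E \subseteq G \times \MMo$ for which the identity (with $f = \1_E$) holds is closed under countable disjoint unions and under proper differences by linearity and monotone convergence of both sides (both sides of the claimed equation are of the form ``integral of a nonnegative quantity'', so countable additivity in $E$ is exactly monotone convergence); since it contains the $\pi$-system of measurable rectangles $U \times A$ with $\lambda(U) < \infty$, which generates the product $\sigma$-algebra (restricting to $U$ of finite volume is harmless since $G$ is $\sigma$-finite), a Dynkin/monotone class argument gives the identity for all $\1_E$. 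Second, linearity extends it to nonnegative simple functions. Third, monotone convergence extends it to all nonnegative measurable $f$, approximating $f$ from below by simple functions and passing to the limit on both sides. The only mild subtlety to mention is measurability: one should note that $(x, \omega) \mapsto (x, x^{-1}\omega)$ is a Borel automorphism of $G \times \MMo$ (with the caveat that on the left one sums only over $x \in \omega$, which is why the relevant domain is really $\Marrow$), so that $x \mapsto f(x, x^{-1}\omega)$ and the sum over $x \in \omega$ are jointly measurable; this is the standard fact that the evaluation/rerooting maps on configuration spaces are Borel.

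The main obstacle, such as it is, is not conceptual but bookkeeping: one must be careful that the Palm measure as \emph{defined} (via relative rates of thinnings) genuinely matches the rectangle case of the CLMM, i.e. that the elementary manipulation ``$\EE_\mu[\#\{g \in U : g^{-1}\omega \in A\}] = \lambda(U)\intensity(\mu)\mu_0(A)$'' is valid for \emph{all} finite-volume Borel $U$, not just unit-volume ones. This again follows because $U \mapsto \EE_\mu[\#\{g \in U : g^{-1}\omega \in A\}]$ is a translation-invariant (using invariance of $\mu$ and unimodularity) Borel measure on $G$, hence a multiple of $\lambda$, and the multiplier is $\intensity(\mu)\mu_0(A)$ by the unit-volume normalisation. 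Once that is in place the rest is the routine $\pi$-$\lambda$ plus monotone-convergence machinery, and I would present it compactly rather than in full detail.
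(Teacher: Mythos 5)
Your proof is correct and follows essentially the same route the paper intends: establish the formula for $f = \1_{U\times A}$ directly from the definition of the Palm measure, then upgrade to arbitrary nonnegative measurable $f$ by a Dynkin/monotone class argument plus monotone convergence. The paper compresses the second half of this to ``invokes the usual voodoo,'' so your spelled-out $\pi$--$\lambda$ step and measurability remarks are exactly the details being elided, not a different proof.

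One genuine slip worth correcting: you invoke \emph{unimodularity} twice to justify that $U \mapsto \EE_\mu\!\left[\#\{g \in U : g^{-1}\omega \in A\}\right]$ is proportional to Haar measure, pointing to the computation in Example~\ref{constantthickening}. Unimodularity is not used here. Writing $g = hg'$ with $g' \in U$ turns $\EE_\mu\!\left[\#\{g \in hU : g^{-1}\omega \in A\}\right]$ into $\EE_\mu\!\left[\#\{g' \in U : (g')^{-1}(h^{-1}\omega) \in A\}\right]$, and invariance of $\mu$ under $\omega \mapsto h^{-1}\omega$ gives left-invariance of this set function; uniqueness of (left) Haar measure then finishes it without any appeal to the modular function. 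This is not merely cosmetic: the CLMM is stated and used in the paper \emph{before} unimodularity is assumed (Section~\ref{unimodularity}), and unimodularity only enters afterwards when the CLMM is combined with the range map to prove Proposition~\ref{pmpgroupoid} and the Mass Transport Principle. The analogy to Example~\ref{constantthickening} is also misleading, since that computation genuinely needs unimodularity because it passes through the \emph{right} translate $Uf^{-1}$, which does not occur in the CLMM rectangle case.
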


Note that summing against $\omega$ is the same as integrating $G$ against $\omega$ viewed as a locally finite measure on $G$. 

\begin{remark}\label{VIF}

If $\nu$ is a point process with $\nu_0 = \mu_0$, then $\nu = \mu$, that is, the Palm measure \emph{determines} the point process.

To see this, we use the existance of a map $\mathscr{V} : [0,1] \times \MMo \to \MM$ with the property that if $\mu$ is \emph{any} point process with Palm measure $\mu_0$, then $\mathscr{V}_*(\text{Leb} \otimes \mu_0) = \mu$. This is a consequence of the \emph{Voronoi inversion formula}, see Section 9.4 of \cite{MR3791470}. 

\end{remark}

\subsection{Unimodularity and the Mass Transport Principle}\label{unimodularity}

The source and range maps $s, t : \Marrow \to \MM$ induce a pair of measures on $\Marrow$ defined by
\[
    \muarrow^s(\mathscr{G}) = \int_\MMo \abs{s^{-1}(\omega) \cap \mathscr{G}(\omega)} d\mu_0(\omega), \text{ and } \muarrow^t(\mathscr{G}(\omega)) = \int_\MMo \abs{t^{-1}(\omega) \cap \mathscr{G}} d\mu_0(\omega).
\]
In our factor graph interpretation this corresponds to the expected indegree and outdegree of $\mathscr{G}$ respectively, where we view $\mathscr{G}$ as a \emph{directed} graph. To see this, recall that for a rooted configuration $\omega \in \MMo$,
\[
    s^{-1}(\omega) = \{(\omega, g) \in \MMo \times G \mid g \in \omega\} \text{ and } t^{-1}(\omega) = \{(g^{-1}\omega, g^{-1}) \in \MMo \times G \mid g \in \omega \},
\]
and that there is an edge from $0$ to $g$ in $\mathscr{G}(\omega)$ exactly when $(\omega, g) \in \mathscr{G}$, and an edge from $g$ to $0$ exactly when $(g^{-1}\omega, g^{-1}) \in \mathscr{G}$. Thus
\[
    \overrightarrow{\deg}_0({\mathscr{G}(\omega)}) = \abs{s^{-1}(\omega) \cap \mathscr{G}(\omega)} \text{ and } \overleftarrow{\deg}_0({\mathscr{G}(\omega)}) = \abs{t^{-1}(\omega) \cap \mathscr{G}(\omega)}.
\]

\begin{remark}

We have had to adapt notation to suit our purposes. Usually a groupoid would be denoted by a letter like $\mathcal{G}$, and that is the set of arrows. Then its units would be denoted $\mathcal{G}_0$. We have tried to match this up with the necessary notation from point process theory as closely as possible. 

We choose to denote outdegree by an expression like $\overrightarrow{\deg}_0({\mathscr{G}(\omega)})$ instead of $\deg^+_{\mathscr{G}(\omega)}(0)$ as the arrows are more evocative, and the subscript notation becomes very small (as in, for instance, $\deg^+_{\mathscr{G}(\Pi_0)}(0)$. 

\end{remark}

\begin{prop}\label{pmpgroupoid}
    
    If $G$ is \emph{unimodular}, then $\muarrow^s = \muarrow^t$. That is, $(\Marrow, \muarrow)$ forms a discrete pmp groupoid.
    
    Equivalently, if $\Pi_0$ is the Palm version of any point process $\Pi$ on $G$, then
    \[
        \EE\left[ \overrightarrow{\deg}_0({\mathscr{G}(\Pi_0)}) \right] = \EE\left[ \overleftarrow{\deg}_0({\mathscr{G}(\Pi_0)}) \right].
    \]
    
    We will denote by $\muarrow$ this common measure $\muarrow^s = \muarrow^t$.
\end{prop}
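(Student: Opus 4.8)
The plan is to verify the equivalent degree formulation stated in the Proposition, which amounts to showing $\int_{\Marrow}\phi\,d\muarrow^s = \int_{\Marrow}\phi\,d\muarrow^t$ for every nonnegative Borel function $\phi:\Marrow\to[0,\infty]$ (taking $\phi=\1_{\mathscr{A}}$ then recovers equality of the two measures on an arbitrary Borel $\mathscr{A}\subseteq\Marrow$, and taking $\phi$ to be an edge-indicator recovers the in- versus out-degree statement). Unwinding the definitions of the fibres $s^{-1}(\omega)=\{(\omega,g):g\in\omega\}$ and $t^{-1}(\omega)=\{(g^{-1}\omega,g^{-1}):g\in\omega\}$, this is the identity $\EE_0\!\left[\sum_{g\in\omega}\phi(\omega,g)\right] = \EE_0\!\left[\sum_{g\in\omega}\phi(g^{-1}\omega,g^{-1})\right]$, where $\EE_0$ denotes integration against the Palm measure $\mu_0$.

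First I would transfer the right-hand side to the stationary picture using the CLMM (Theorem \ref{CLMM}). Applied to $f(x,\eta)=\1_U(x)\,\Phi(\eta)$ for a set $U$ of unit Haar volume, the CLMM specializes to the Palm inversion $\EE_0[\Phi]=\intensity(\mu)^{-1}\EE\!\left[\sum_{x\in\omega\cap U}\Phi(x^{-1}\omega)\right]$. Using this with $\Phi(\eta)=\sum_{g\in\eta}\phi(g^{-1}\eta,g^{-1})$ and then reparametrizing $g=x^{-1}h$ (a bijection from $\omega$ onto $x^{-1}\omega$ as $h$ ranges over $\omega$) rewrites the right-hand side as $\intensity(\mu)^{-1}\EE\!\left[\sum_{x\in\omega\cap U}\sum_{h\in\omega}\phi(h^{-1}\omega,h^{-1}x)\right]$. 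Now I would interchange the two sums (Tonelli; all terms are nonnegative) and apply the CLMM a \emph{second} time, this time peeling off the sum over $h\in\omega$: writing the inner sum as $\sum_{x'\in h^{-1}\omega}\1_U(hx')\,\phi(h^{-1}\omega,x')$ displays it as $f(h,h^{-1}\omega)$ for an explicit nonnegative $f$, and the CLMM turns the whole expression into $\intensity(\mu)\,\EE_0\!\left[\sum_{x'\in\omega}\phi(\omega,x')\int_G\1_U(hx')\,d\lambda(h)\right]$.

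It remains to evaluate $\int_G\1_U(hx')\,d\lambda(h)=\lambda(Ux'^{-1})$, and \emph{this is the only point at which the hypothesis is used}: the left Haar measure of a unimodular group is also right-invariant, so $\lambda(Ux'^{-1})=\lambda(U)=1$ for every $x'\in G$. The last display then collapses to $\intensity(\mu)\,\EE_0\!\left[\sum_{x'\in\omega}\phi(\omega,x')\right]$, and after cancelling the factor $\intensity(\mu)^{-1}$ from the Palm inversion we are left with exactly $\int_{\Marrow}\phi\,d\muarrow^s$. The two applications of the CLMM are routine; the step I would be most careful about is the bookkeeping of left versus right translates in the substitutions $g=x^{-1}h$ and $x=hx'$, since it is precisely this left/right asymmetry that, in a non-unimodular group, produces a modular-function factor $\Delta(x')^{\pm1}$ in the integral above — so that the same computation would prove a $\Delta$-twisted mass-transport identity rather than $\muarrow^s=\muarrow^t$.
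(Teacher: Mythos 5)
Your proof is correct and takes essentially the same route as the paper's: two applications of the CLMM bracketing a Fubini/change-of-variables step, with unimodularity entering exactly once to evaluate $\lambda(Ux'^{-1})=\lambda(U)$. The only cosmetic differences are the direction (you go from $\muarrow^t$ to $\muarrow^s$, the paper from $\muarrow^s$ to $\muarrow^t$) and that you integrate a general nonnegative $\phi$ rather than an indicator; your careful tracking of the substitutions is, if anything, slightly cleaner than the paper's.
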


\begin{proof}[Proof of Proposition \ref{pmpgroupoid}]
    
    \begin{align*}
        \muarrow^s(\mathscr{G}) &= \EE_{\mu_0} \left[ \sum_{g \in \omega} \1_{(\omega, g) \in \mathscr{G}} \right] && \text{by definition} \\
        &= \EE_{\mu_0} \left[\int_G \1_{x \in U} \sum_{g \in \omega} \1_{(\omega, g) \in \mathscr{G}} d\lambda(x) \right] && \text{For any } U \subseteq G \text{ of unit volume} \\
        &= \frac{1}{\intensity \mu} \EE_\mu \left[ \sum_{x \in \omega} \1_{x \in U} \sum_{g \in x^{-1}\omega} \1_{(x^{-1}\omega, g) \in \mathscr{G}}   \right] && \text{By the CLLM}    \\
        &= \frac{1}{\intensity \mu} \EE_\mu \left[ \sum_{h \in \omega}  \sum_{hg^{-1} \in \omega} \1_{hg^{-1} \in U} \1_{(gh^{-1}\omega, g) \in \mathscr{G}}   \right] && \text{Fubini and variable change } h = xg  \\
        &= \EE_{\mu_0} \left[ \int_G \sum_{g \in \omega} \1_{h^{-1}g \in U} \1_{(g\omega, g) \in \mathscr{G}} d\lambda(h) \right] && \text{By the CLLM}  \\
       &= \EE_{\mu_0} \left[ \sum_{g \in \omega}  \underbrace{\left( \int_G \1_{h^{-1}g \in U} d\lambda(h) \right)}_{= \lambda((Ug)^{-1})}   \1_{(g\omega, g) \in \mathscr{G}}  \right] && \text{Fubini}    \\
       &= \EE_{\mu_0} \left[ \sum_{g \in \omega}    \1_{(g\omega, g) \in \mathscr{G}}  \right] && \text{By unimodularity}  \\
       &= \muarrow^t(\mathscr{G}).
    \end{align*}
\end{proof}

\begin{defn}

The \emph{Palm groupoid} of a point process $\Pi$ with law $\mu$ is $(\Marrow, \muarrow)$. If $\Pi$ is free, then this groupoid is principal, and thus we refer to $\Pi$'s \emph{Palm equivalence relation} $(\MMo, \Rel, \mu_0)$.

\end{defn}

\begin{remark}

To the author's knowledge, the only direct references in the literature to the existence of this equivalence relation can be found in a paper of Avni \cite{avni2005spectral} (Example 2.2) and a paper of Bowen \cite{bowen2018all} (Questions and comments, item 1). 

There are also implicit references: see \cite{daley2007introduction}, \cite{murphy2017point}, \cite{MR3771756}.

\end{remark}

\begin{remark}
At this point one may be wondering what to do about the \emph{cost} (in the sense of Levitt and Gaboriau) of the above pmp equivalence relation. The author and Mikl\'{o}s Ab\'{e}rt explore this topic in a concurrently appearing work, where it is shown for example that the Poisson point process action has \emph{maximal} cost amongst all free actions of a group.
\end{remark}

By the usual routine for extending a statement about equality of measures to equality of integrals one can deduce from Proposition \ref{pmpgroupoid} \emph{The Mass Transport Principle}:

\begin{thm}[The Mass Transport Principle]
    Let $\mu$ be a point process on a unimodular group. Suppose $T : G \times G \times \MM \to \RR_{\geq 0}$ is a measurable function which is \emph{diagonally invariant} in the sense that $T(gx, gy; g\omega) = T(x, y; \omega)$ for all $g \in G$. Then
    \[
        \EE_{\mu_0} \left[ \sum_{x \in \omega} T(x, 0; \omega) \right] = \EE_{\mu_0} \left[ \sum_{y \in \omega} T(0, y; \omega) \right].
    \]
\end{thm}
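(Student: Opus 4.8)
The plan is to deduce the Mass Transport Principle from Proposition \ref{pmpgroupoid} by the standard measure-theoretic bootstrapping: an equality of two measures on $\Marrow$ extends to an equality of integrals of nonnegative measurable functions against those measures. So first I would recast the hypothesis. Given a diagonally invariant $T : G \times G \times \MM \to \RR_{\geq 0}$, define a function $\widehat{T} : \Marrow \to \RR_{\geq 0}$ on birooted configurations by $\widehat{T}(\omega, g) = T(0, g; \omega)$ (equivalently $T(g, 0; \omega) = T(0, g; g^{-1}\cdot(g\omega)) $ after rewriting via invariance, so one also has $\widehat{T}(\omega,g)$ reading off $T(g,0;\omega)$ up to the reroot). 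The point is that the two sums appearing in the statement are exactly $\int_{\Marrow} \widehat{T} \, d\muarrow^t$ and $\int_{\Marrow} \widehat{T}\, d\muarrow^s$ respectively, once one unwinds the definitions of $\muarrow^s$ and $\muarrow^t$ from Section \ref{unimodularity}.

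The key steps, in order, are: (1) Check that $\widehat{T}$ is a well-defined Borel function on $\Marrow$ — this uses that $T$ is measurable and that $(\omega,g) \mapsto (\omega, g)$ is Borel; diagonal invariance is what guarantees the value only depends on the relative configuration and hence descends to $\Marrow$ rather than requiring a choice of representative. (2) Verify the two identities
\[
    \EE_{\mu_0}\left[ \sum_{y \in \omega} T(0,y;\omega) \right] = \int_{\Marrow} \widehat{T}\, d\muarrow^s, \qquad
    \EE_{\mu_0}\left[ \sum_{x \in \omega} T(x,0;\omega) \right] = \int_{\Marrow} \widehat{T}\, d\muarrow^t,
\]
where for the first one uses $s^{-1}(\omega) = \{(\omega,g) : g \in \omega\}$ so that integrating $\widehat{T}$ over the $s$-fibre of $\omega$ reproduces $\sum_{g \in \omega} T(0,g;\omega)$, and for the second one uses $t^{-1}(\omega) = \{(g^{-1}\omega, g^{-1}) : g \in \omega\}$ together with diagonal invariance to rewrite $\widehat{T}(g^{-1}\omega, g^{-1}) = T(0, g^{-1}; g^{-1}\omega) = T(g, 0; \omega)$, so integrating over the $t$-fibre reproduces $\sum_{g \in \omega} T(g,0;\omega)$. (3) Invoke Proposition \ref{pmpgroupoid}, which says $\muarrow^s = \muarrow^t$ as measures on $\Marrow$; apply the monotone class / simple function approximation argument (indicators, then nonnegative simple functions, then monotone convergence for general nonnegative measurable $\widehat{T}$) to conclude $\int \widehat{T}\, d\muarrow^s = \int \widehat{T}\, d\muarrow^t$, which is precisely the claimed identity.

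The main obstacle — really the only place that needs care — is step (2), matching the combinatorial bookkeeping of the target-map fibre $t^{-1}(\omega)$ with the sum $\sum_{x\in\omega} T(x,0;\omega)$; one must track the inversions ($g \leftrightarrow g^{-1}$, $\omega \leftrightarrow g^{-1}\omega$) carefully and apply diagonal invariance at exactly the right moment, since a sign error in which variable gets rerooted would swap the two sums or, worse, produce a spurious factor. Once the two fibrewise identities are pinned down, the rest is the routine "voodoo" (as the paper calls it) of extending equality of measures to equality of integrals, and no further use of unimodularity is needed beyond what Proposition \ref{pmpgroupoid} already absorbed.
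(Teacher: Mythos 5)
Your proposal is correct and takes essentially the same approach as the paper, which deduces the Mass Transport Principle from Proposition \ref{pmpgroupoid} by the standard extension of an equality of measures on $\Marrow$ to an equality of integrals. You have in fact spelled out more of the fibrewise bookkeeping (the correct function $\widehat{T}$ on $\Marrow$, and the rerooting $\widehat{T}(g^{-1}\omega, g^{-1}) = T(g,0;\omega)$ over the $t$-fibre) than the paper's one-sentence proof makes explicit.
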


We view $T(x, y; \omega)$ as representing an amount of \emph{mass} sent from $x$ to $y$ when the configuration is $\omega$. Thus the integrand on the lefthand side represents the total mass received from the root, and similarly the integrand on the righthand side represents the total mass sent from the root.

The mass transport principle immediately follows from Proposition \ref{pmpgroupoid}, as it just represents the integral of the function $\omega \mapsto \sum_{x \in \omega} T(x, 0; \omega)$ with respect to $\muarrow^t$ and $\muarrow^s$.

\subsection{Ergodicity and the factor correspondences in the measured category}\label{ergodicity}

\begin{defn}
    A subset $A \subseteq \MM$ of unrooted configurations is \emph{shift-invariant} if for all $\omega \in A$ and $g \in G$, we have $g\omega \in A$.

	A subset $A_0 \subseteq \MMo$ of rooted configurations is \emph{rootshift invariant} if for all $\omega \in A_0$ and $g \in \omega$, we have $g^{-1}\omega \in A_0$. 
	
	The groupoid $(\Marrow, \muarrow)$ is \emph{ergodic} if every rootshift invariant subset $A \subseteq \MMo$ has $\mu_0(A) = 0$ or $1$.
	
\end{defn}

Note that if $A \subseteq \MM$ is shift-invariant, then $A_0 := A \cap \MMo$ is rootshift invariant, and if $A_0 \subseteq \MMo$ is rootshift-invariant, then $A := GA_0$ is shift invariant. More is true:

\begin{prop}\label{transferprinciple}

Let $\mu$ be a point process with Palm measure $\mu_0$. 
\begin{enumerate}
    \item If $A \subseteq \MMo$ is rootshift invariant, then $\mu_0(A) = \mu(GA)$.
    \item If $A \subseteq \MM$ is shift invariant, then $\mu_0(A \cap \MMo) = \mu(A)$.
\end{enumerate}

That is, under the correspondence between rootshift invariant subsets of $\MMo$ and shift invariant subsets of $\MM$, the measures $\mu_0$ and $\mu$ coincide.

In particular, $G \acts (\MM, \mu)$ is ergodic \emph{if and only if} $(\MMo, \Rel, \mu_0)$ is ergodic.

\end{prop}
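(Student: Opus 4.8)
The plan is to prove the two numbered statements directly from the definition of the Palm measure, after which the ergodicity equivalence follows by a purely set-theoretic argument. The key observation is that for a shift-invariant or rootshift-invariant set, the thinning $\theta^A$ attached to $A_0 := A \cap \MMo$ is especially simple: since membership $g^{-1}\omega \in A_0$ depends only on the shift-orbit of $\omega$ (not on the choice of root $g$), we get that $\theta^{A_0}(\omega)$ is either all of $\omega$ or empty, according to whether $\omega \in GA_0$ or not. Concretely, $\theta^{A_0}(\omega) = \omega$ when $\omega \in GA_0$ and $\theta^{A_0}(\omega) = \varnothing$ otherwise.

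For statement (1), I would start from the explicit formula
\[
    \mu_0(A) = \frac{1}{\intensity(\mu)} \EE_\mu\left[ \#\{ g \in U \mid g^{-1}\omega \in A \} \right],
\]
with $U \subseteq G$ of unit volume. By the remark above, the inner count equals $\#(U \cap \omega)$ when $\omega \in GA$ and equals $0$ otherwise, i.e. it is $\1_{GA}(\omega) \cdot N_U(\omega)$. Hence
\[
    \mu_0(A) = \frac{1}{\intensity(\mu)} \EE_\mu\left[ \1_{GA}(\omega) N_U(\omega) \right].
\]
Now I would invoke ergodic-decomposition-style reasoning, or more simply the defining property of intensity applied to the point process $\mu$ \emph{conditioned on the shift-invariant event} $GA$: the measure $\mu\restr{GA}$ is still shift-invariant, so $\EE_\mu[\1_{GA} N_U] = \intensity(\mu) \cdot \mu(GA)$ because $U \mapsto \EE_\mu[\1_{GA} N_U]$ is an invariant Borel measure on $G$, hence a multiple of Haar, and evaluating the multiplier on a unit-volume set and comparing with the unconditioned computation gives exactly the factor $\intensity(\mu)\,\mu(GA)$. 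Dividing by $\intensity(\mu)$ yields $\mu_0(A) = \mu(GA)$. Statement (2) is then immediate: if $A \subseteq \MM$ is shift invariant then $G(A \cap \MMo) = A$ up to a $\mu$-null set (every configuration with at least one point — a full-measure event for a nonempty process, and the empty configuration contributes nothing — lies in $G(A \cap \MMo)$ iff it lies in $A$), so applying (1) to $A_0 = A \cap \MMo$ gives $\mu_0(A \cap \MMo) = \mu(GA_0) = \mu(A)$.

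For the final sentence, I would argue: rootshift-invariant subsets of $\MMo$ and shift-invariant subsets of $\MM$ are in bijection via $A_0 \mapsto GA_0$ and $A \mapsto A \cap \MMo$ (as already noted just before the proposition), and by (1)–(2) this bijection carries $\mu_0$ to $\mu$. So $\mu_0(A_0) \in \{0,1\}$ for every rootshift-invariant $A_0$ iff $\mu(A) \in \{0,1\}$ for every shift-invariant $A$, which is precisely the statement that $(\MMo, \Rel, \mu_0)$ is ergodic iff $G \acts (\MM, \mu)$ is ergodic.

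The main obstacle I anticipate is making the step $\EE_\mu[\1_{GA} N_U] = \intensity(\mu)\,\mu(GA)$ fully rigorous: one must be a little careful that this is not claiming $GA$ is Haar-scaled in a way that depends on $U$, but rather that the set function $U \mapsto \EE_\mu[\1_{GA} N_U]$ is a shift-invariant measure (invariance uses that $GA$ is shift invariant, so $\1_{GA}(g^{-1}\omega) = \1_{GA}(\omega)$) and therefore proportional to Haar measure, with the constant of proportionality forced to be $\intensity(\mu)\,\mu(GA)$ by a direct comparison; equivalently, one can just cite the Campbell–Little–Mecke–Matthes formula (Theorem \ref{CLMM}) with the diagonally-invariant integrand $f(x,\omega) = \1_U(x)\1_{GA}(\omega)$ — wait, $\1_{GA}$ restricted to $\MMo$ is $\1_{A_0}$, and then the CLMM reads $\EE_\mu[\sum_{x\in\omega}\1_U(x)\1_{A_0}(x^{-1}\omega)] = \intensity(\mu)\EE_{\mu_0}[\int_G \1_U(x)\,d\lambda(x)\cdot \1_{A_0}]$, which since $\lambda(U) = 1$ is just $\intensity(\mu)\mu_0(A_0)$, recovering the Palm formula; combining this with the observation that the left side also equals $\intensity(\mu)\mu(GA_0)$ by the invariance argument closes the loop. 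Everything else is bookkeeping.
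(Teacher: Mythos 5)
Your proposal follows the paper for the first two lines (rewriting the inner count as $\1_{GA}(\omega)N_U(\omega)$), but then diverges in a way that introduces a genuine gap. You assert that the constant of proportionality in the invariant measure $U \mapsto \EE_\mu[\1_{GA}N_U] = c\,\lambda(U)$ is $c = \intensity(\mu)\,\mu(GA)$, but the invariance argument only shows such a constant \emph{exists}; it does not identify it. What the constant actually equals is $\mu(GA)\cdot\intensity\bigl(\mu(\cdot \mid GA)\bigr)$, the mass of $GA$ times the intensity of the \emph{conditional} process, and this need not equal $\intensity(\mu)\,\mu(GA)$ when $\mu$ is not ergodic. Concretely, let $\mu = \tfrac12 p_1 + \tfrac12 p_2$ where $p_1, p_2$ are lattice-shift processes on $\RR$ for $\ZZ$ and $2\ZZ$ respectively (intensities $1$ and $\tfrac12$, so $\intensity(\mu) = \tfrac34$), and take $GA$ to be the event that the configuration is a shift of $\ZZ$. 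Then $\mu(GA) = \tfrac12$ but a direct computation gives $\mu_0(A) = \tfrac{4}{3}\cdot\tfrac12 = \tfrac23 \ne \tfrac12$. So the equality you are trying to establish by "direct comparison" is the very thing that must be proved, and it is not available without an ergodicity hypothesis. Your CLMM-based alternative is circular for the same reason: it recovers the Palm-measure definition and then appeals back to the same unjustified identification of $c$.

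The paper's proof avoids this by splitting into cases. In the ergodic case the identity $\mu_0(A) = \mu(GA)$ is essentially trivial: ergodicity of $G \acts (\MM,\mu)$ forces $\mu(GA) \in \{0,1\}$, and in either case the computation $\tfrac{1}{\intensity\mu}\EE_\mu[N_U\,\1_{GA}]$ collapses to $0$ or $1$ as well (when $\1_{GA} = 1$ a.s.\ this is just the definition of intensity; when $\1_{GA} = 0$ a.s.\ both sides vanish). The ergodicity hypothesis is therefore doing real work, not merely cleaning up edge cases, and a route that tries to deduce (1) and (2) without it will not close. (The paper then addresses the general case via the ergodic decomposition; that step is where the heavy lifting happens for non-ergodic $\mu$. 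What survives in full generality is the final "in particular" clause, since $\mu_0(A) = 0 \iff \mu(GA) = 0$ and $\mu_0(A) = 1 \iff \mu(GA) = 1$ do hold without ergodicity, and that zero-one correspondence is all the ergodicity equivalence needs.) To repair your proof, replace the "direct comparison" step by an explicit invocation of ergodicity to conclude $\1_{GA}$ is almost surely constant, and then handle the general case by decomposing $\mu$ into ergodic components as the paper does.
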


\begin{proof}
    We assume ergodicity and prove the statements about measures. The general case will follow.
    
    First, suppose $G \acts (\MM, \mu)$ is ergodic, and let $A \subseteq \MMo$ be rootshift invariant. Then for any $U \subseteq G$ of unit volume,
    \begin{align*}
        \mu_0(A) &= \frac{1}{\intensity \mu} \EE_\mu\left[ \#\{g \in U \mid g^{-1}\omega \in A \} \right] && \text{by definition} \\
        &= \frac{1}{\intensity \mu} \EE_\mu\left[ \abs{\omega \cap U} \1_{\omega \in GA} \right] && \text{by rootshift invariance of } A   \\
        &= \mu(GA) && \text{by ergodicity}.
    \end{align*}
    
    In particular, we see that $\mu_0(A)$ is zero or one, so the equivalence relation is ergodic.
    
    Now suppose $(\MMo, \Rel, \mu)$ is ergodic, and let $A \subseteq \MM$ be shift invariant.
    \begin{align*}
        \mu_0(A \cap \MMo) &= \frac{1}{\intensity \mu} \EE_\mu \left[ \#\{g \in U \mid g^{-1}\omega \in A \cap \MMo \} \right] && \text{by definition} \\
        &= \frac{1}{\intensity \mu} \EE_\mu\left[ \abs{\omega \cap U} \1_{\omega \in A} \right] && \text{by shift invariance of } A \\
        &= \mu(A) && \text{by ergodicity}.
    \end{align*}

For the general case, we appeal to the ergodic decomposition theorem (see \cite{MR1784210} for a proof):
\begin{thm}

Let $G$ be an lcsc group, and $G \acts (X, \mu)$ a pmp action on a standard Borel space. Then there exists a standard Borel space $Y$ equipped with a probability measure $\nu$ and a family $\{ p_y \mid y \in Y\}$ of probability measures $p_y$ on $X$ with the following properties:
\begin{enumerate}
    \item For every Borel $A \subset X$, the map $y \mapsto p_y(A)$ is Borel, and
    \[
        \mu(A) = \int_Y p_y(A) d\nu(y).
    \]
    \item For every $y \in Y$, $p_y$ is an invariant and ergodic measure for the action $G \acts (X, p_y)$,
    \item If $y, y' \in Y$ are distinct, then $p_y$ and $p_y'$ are mutually singular.
\end{enumerate}

\end{thm}

There is an almost identically stated version of the above theorem for pmp cbers as well. These two decompositions are essentially equivalent, in a way that we shall now discuss.

If $(Y, \nu)$ and $\{p_y \mid y \in Y\}$ is the ergodic decomposition for $G \acts (\MM, \mu)$, then the Palm measures $(p_y)_0$ of the $p_y$ form an ergodic decomposition for $(\MMo, \Rel, \mu_0)$. That is, for all $A \subseteq \MMo$ we have

\[
    \mu_0(A) = \int_Y (p_y)_0(A) d\nu(y).
\]
Applying the previous ergodic case to this yields the general formula.
\end{proof}

\begin{remark}

It is immediate that the ergodic decomposition for $G \acts (\MM, \mu)$ determines the ergodic decomposition for $(\MMo, \Rel, \mu_0)$. 

In the other direction, let $\{p'_y \mid y \in Y'\}$ denote the ergodic decomposition of $(\MMo, \Rel, \mu_0)$, so that
\[
    \mu_0(A) = \int_{Y'} p'_y(A) d\nu'(y).
\]
It turns out that all of the ergodic components $p'_y$ are not just probability measures on $\MMo$, but are themselves the Palm measures of point processes. This can be proven by using a characterisation of Mecke, see Theorem 13.2.VIII of \cite{daley2007introduction} (one applies the formula listed as item (iii) to $\support(p'_y)$).

One can then use the Voronoi inversion technique as referenced in Remark \ref{VIF} to construct the ergodic decomposition of $\mu$ out of the ergodic decomposition of $\mu_0$ (with an additional $\texttt{Unif}[0,1]$ random variable).

\end{remark}

We now prove Theorem \ref{correspondencetheorem}, building on Section \ref{borelcorrespondences}. The task here is to verify that under the correspondence, objects which are equal almost everywhere with respect to the point process are equal almost everywhere with respect to the Palm measure, and vice versa.

\begin{lem}\label{extensionlemma}
Let $\mu$ be a point process on $G$ with Palm measure $\mu_0$, and $X$ a Borel $G$-space.

Let $\Phi, \Phi' : \MM \to X$ be an equivariant Borel map. Then
\[
    \Phi = \Phi' \;\; \mu \text{ almost everywhere \emph{if and only if} } \restr{\Phi}{\MMo} = \restr{\Phi'}{\MMo}\;\; \mu_0 \text{ almost everywhere}.
\]
\end{lem}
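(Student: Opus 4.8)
The plan is to prove the two implications separately, using the defining formula for the Palm measure as the bridge. Write $N_{\text{bad}}$ for the "disagreement set" $D = \{\omega \in \MM \mid \Phi(\omega) \neq \Phi'(\omega)\}$ and $D_0 = D \cap \MMo$. The crucial observation is that $D$ is a \emph{shift-invariant} Borel subset of $\MM$: if $\Phi(\omega) = \Phi'(\omega)$ then by equivariance $\Phi(g\omega) = g\Phi(\omega) = g\Phi'(\omega) = \Phi'(g\omega)$, so agreement is preserved under the $G$-action, and hence so is disagreement. Likewise $D_0$ is rootshift-invariant: if $\Phi(\omega) = \Phi'(\omega)$ and $g \in \omega$, then $\Phi(g^{-1}\omega) = g^{-1}\Phi(\omega) = g^{-1}\Phi'(\omega) = \Phi'(g^{-1}\omega)$. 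So both $D$ and $D_0$ fall precisely under the correspondence of Proposition \ref{transferprinciple}.

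With that in hand, the forward direction is essentially immediate: suppose $\mu(D) = 0$. Then by part (2) of Proposition \ref{transferprinciple} applied to the shift-invariant set $D$, we get $\mu_0(D \cap \MMo) = \mu(D) = 0$, i.e. $\mu_0(D_0) = 0$, which is exactly $\restr{\Phi}{\MMo} = \restr{\Phi'}{\MMo}$ $\mu_0$-almost everywhere. For the converse, suppose $\mu_0(D_0) = 0$. Since $D_0$ is rootshift-invariant, part (1) of Proposition \ref{transferprinciple} gives $\mu(GD_0) = \mu_0(D_0) = 0$. It remains to check that $D \subseteq GD_0$ up to a $\mu$-null set — in fact $D \subseteq GD_0$ on the nose provided every configuration in $D$ contains at least one point, which fails only for the empty configuration $\empt$. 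But if $\omega = \empt \in D$, then $g\omega = \empt$ for all $g$, so $\{\empt\}$ is itself a shift-invariant set, and either it is $\mu$-null (in which case we discard it) or $\mu = \delta_\empt$, the empty process, which has intensity zero and is excluded from the discussion (we only consider finite — and implicitly positive — intensity processes). So modulo the trivial empty process, $D \subseteq GD_0$ and hence $\mu(D) = 0$.

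I should double-check one subtlety: Proposition \ref{transferprinciple} as stated uses the correspondence $A_0 \mapsto GA_0$ and $A \mapsto A \cap \MMo$, and the identity $D_0 = D \cap \MMo$ together with $D = GD_0$ (up to the empty configuration) shows $D$ and $D_0$ really are the corresponding pair, so both parts of the Proposition apply consistently and give $\mu(D) = \mu_0(D_0)$ outright.

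The main obstacle — really the only place any care is needed — is the empty configuration $\empt \in \MMo$ (indeed $\empt \in \MM$ and $0 \notin \empt$, so actually $\empt \notin \MMo$; but if $\Phi(\empt) \neq \Phi'(\empt)$ this contributes to $D$ without contributing to $D_0$, since $\empt$ has no points to reroot at). As noted above this is harmless because the empty process is the intensity-zero process, excluded throughout; alternatively one simply notes $\{\empt\}$ is shift-invariant and $G$-fixed, so it is $\mu$-null unless $\mu$ is the (excluded) empty process. Apart from this degenerate point, the lemma is a direct corollary of Proposition \ref{transferprinciple} once one verifies the invariance of the disagreement set, which is the heart of the matter and follows purely from equivariance of $\Phi$ and $\Phi'$.
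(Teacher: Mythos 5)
Your proof is essentially correct and follows the paper's route: pass to the (dis)agreement set, note that it is shift-invariant, note its restriction to $\MMo$ is rootshift-invariant, and invoke Proposition~\ref{transferprinciple}. (The paper uses the agreement set rather than the disagreement set $D$; this is the same argument up to complements.) The one issue is the converse direction, where you detour through part~(1) of Proposition~\ref{transferprinciple}, which forces you to verify $D \subseteq GD_0$ and hence to fuss over the empty configuration. This detour is avoidable, and your own ``double-check'' paragraph actually spots why: part~(2) applied to the shift-invariant set $D$ gives the identity $\mu_0(D \cap \MMo) = \mu(D)$ in one stroke, which yields both implications simultaneously and never touches $\varnothing$. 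Within the detour, the claim that ``$\{\varnothing\}$ is shift-invariant, so either it is $\mu$-null or $\mu = \delta_\varnothing$'' is not correct as stated: the lemma does not assume ergodicity, and a nontrivial mixture $\mu = p\,\delta_\varnothing + (1-p)\nu$ with $\nu$ of positive intensity has $0 < \mu(\{\varnothing\}) < 1$. (This degenerate mixture is in fact a genuine edge case where the transfer identity itself can fail; the paper elides it too. It is harmless for the applications, where processes are ergodic or almost surely infinite, but it is worth being aware that ergodicity is what rescues the dichotomy you invoked.) So: right idea, right invariance check --- just use part~(2) for both directions and the extra discussion evaporates.
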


\begin{proof}
Observe that by equivariance the sets
\[
    \{ \omega \in \MM \mid \Phi(\omega) = \Phi'(\omega)\} \text{ and } \{ \omega \in \MMo \mid \Phi(\omega) = \Phi'(\omega) \}
\]
are shift invariant and rootshift invariant respectively. So by Proposition \ref{transferprinciple} one is $\mu$-sure if and only if the other is $\mu_0$-sure, as desired.
\end{proof}

\begin{proof}[Proof of Theorem \ref{correspondencetheorem}]

The method is essentially the same for thinnings and for markings, so we will just prove the thinning statement. To that end, let $\theta : (\MM, \mu) \to \MM$ be a thinning. Note that by our assumption that $\theta$ is equivariant, we have
    \[
        \{ \omega \in \MM \mid \theta(\omega) \subseteq \omega \} \text{ has } \mu \text{ measure one}.
    \]
    This is a shift invariant set, so by Proposition \ref{transferprinciple} we have
    \[
        \{ \omega \in \MMo \mid \theta(\omega) \subseteq \omega \} \text{ has } \mu_0 \text{ measure one}.
    \]
We are now able to define $A = \{\omega \in \MMo \mid 0 \in \theta(\omega) \}$, and this will be our desired subset of $(\MMo, \mu_0)$. 

It follows from equivariance that the thinning $\theta^A$ associated to $A$ satisfies
\[
    \restr{\theta^A}{\MMo} = \restr{\theta}{\MMo} \;\; \mu_0 \text{ almost everywhere,}
\]
so by Lemma \ref{extensionlemma} we have $\theta^A = \theta$ ($\mu$ almost everywhere).

It remains to verify that if $A = B$ $\mu_0$ almost everywhere (that is, that $\mu_0(A \triangle B) = 0$, then $\theta^A = \theta^B$ ($\mu$ almost everywhere).

Recall\footnote{This is a general fact about nonsingular cbers, and it follows from the fact that they can all be generated by actions of \emph{countable} groups.} that the \emph{saturation} of $A \triangle B$
\[
    [A \triangle B] = \{ g^{-1}\omega \in \MMo \mid \omega \in A \triangle B \text{ and } g \in \omega \}
\]
is $\mu_0$ null if $A \triangle B$ is $\mu_0$ null.

Observe that for $\omega \not\in [A \triangle B]$ we have $\theta^A(\omega) = \theta^B(\omega)$, and hence $\restr{\theta^A}{\MMo} = \restr{\theta^B}{\MMo}$ $\mu_0$ almost everywhere, and we are finish by again applying Lemma \ref{extensionlemma}.

If $\mathscr{G}$ is a factor graph of $\mu$, then in the same fashion we see that it has a well-defined restriction to $(\MMo, \mu_0)$. We then define
\[
    \mathscr{A} = \{ (\omega, g) \in \MMo \times G \mid (0, g) \in \mathscr{G}(\omega) \}.
\]

We must verify that if $\mathscr{A}, \mathscr{B} \subseteq \Marrow$ are subsets with $\muarrow(A \triangle B) = 0$, then their associated factor graphs $\mathscr{G}^{\mathscr{A}}$ and $\mathscr{G}^{\mathscr{B}}$ are equal $\mu$ almost everywhere. This assumption states
\[
    \int_{\MMo} \# \{g \in \omega \mid (\omega, g) \in \mathscr{A} \triangle \mathscr{B} \} d\mu_0(\omega) = 0
\]
and hence the integrand is zero $\mu_0$ almost everywhere. By again considering the saturation of sets, we see that
\[
    \mu_0( \{\omega \in \MMo \mid \text{ for all } g \in \omega, g^{-1}\omega \in \mathscr{A} \triangle \mathscr{B} \}) = 0,
\]
from which the argument finishes as in the case of thinnings.
\end{proof}

\begin{remark}\label{cbercorrespondence}

If $\Pi$ is a free point process, then its Palm measure $\mu_0$ concentrates on the set $\Maper_0$ of aperiodic configurations (see Remark \ref{aperiodic}). Thus we only need to consider the rerooting equivalence relation. By using the canonical parametrisation $p_\omega : \omega \to [\omega]_\Rel$ for $\omega \in \Maper_0$, we can transfer any graph with vertex set $\omega$ to be one with vertex set $[\omega]_\Rel$ in a well-defined way. So we see that \emph{for free point processes}, factor graphs are the same thing as Borel graphings of the Palm equivalence relation. In the same way, given a group $\Gamma$ finitely generated by $S \subset \Gamma$ and a free pmp action $\Gamma \acts (\MMo, \mu_0)$ generating the Palm equivalence relation $\Rel$, we get a connected factor graph of $\Pi$ which is directed and edge-labelled by $S$, isomorphic to $\Cay(\Gamma, S)$. This correspondence goes both ways.

\end{remark}

\section{Cayley factor graphs}\label{cayleyfactorgraphs}
\subsection{Characterising amenability and constructing amenable Cayley graphs as factors}\label{amenability}

In this section, we will characterise amenability of a group in terms of the free point processes on it. Whilst not especially novel, this will clarify certain results in the literature. As an application of this we are able to construct essentially arbitrary Cayley factor graphs of amenable discrete groups on point processes in amenable groups.

Holroyd and Peres introduced the following concept in \cite{holroyd2003}:

\begin{defn}\label{clumping}
    
    Let $\Pi$ be a point process with law $\mu$. A sequence of factor graphs $\sim_n^\bullet : (\MM, \mu) \to \graph(G)$ is a \emph{one-ended clumping} if it satisfies the following for $\mu$ almost every $\omega \in \MM$:
    \begin{itemize}
        \item (Ascending) $\sim_1^\omega \subseteq \sim_2^\omega \subseteq \cdots$
        \item (Partitions) the connected components of each $\sim_n^\omega$ consist of \emph{finite} complete graphs, and
        \item (One-endedness) for all $x, y$ in $\omega$ there exists $N = N(x, y, \omega)$ such that $x$ is connected to $y$ in $\sim_N^\omega$.
    \end{itemize}
    
\end{defn}

We view $\sim_n^\omega$ as an \emph{equivalence relation} on $\omega$ consisting of finite classes. If $x, y \in \omega$ then we will write $x \sim_n^\omega y$ to denote that $x$ and $y$ are connected in $\sim_n^\omega$.

We explain one way of interpreting the following definition using the concept of \emph{Voronoi tessellations}. Recall:

\begin{defn}\label{voronoidefn}

    Let $\omega \in \MM$ be a configuration, and $g \in \omega$ one of its points. The associated \emph{Voronoi cell} is
    \[
        V_\omega(g) = \{ x \in G \mid d(x, g) \leq d(x, h) \text{ for all } h \in \omega \}.
    \]
    The associated \emph{Voronoi tessellation} is the ensemble of closed sets $\{V_\omega(g)\}_{g \in \omega}$.

\end{defn}

Left-invariance of the metric $d$ implies that the Voronoi cells are equivariant in the sense that for all $\gamma \in G$, we have $V_{\gamma \omega}(\gamma g) = \gamma V_\omega(g)$.

Note that discreteness of the configuration implies that the Voronoi tessellation forms a locally finite \emph{cover} of the ambient space by closed sets. We would like to think of these sets as forming a \emph{partition} of the ambient space, but this isn't necessarily true even in the measured sense: the boundaries of the Voronoi cells can have positive volume. For example, let $\Gamma$ be a discrete group and consider $\Gamma \times \{0\} \subset \Gamma \times \RR$. 

Lie groups and Riemannian symmetric spaces essentially avoid this deficiency, as hyperplanes\footnote{sets of the form $\{x \in X \mid d(x, g) = d(x, h) \}$ for a fixed distinct pair $g, h \in X$} have zero volume.

So depending on the examples one is interested in one can assume that the Voronoi cells are essentially disjoint (that is, that their intersection is Haar null). If this property is necessary then one can make a small modification to ensure it: we introduce a \emph{tie breaking} function that allows points belonging to multiple Voronoi cells to decide which one they shall belong to. Take any\footnote{Recall that standard Borel spaces are isomorphic if they have the same cardinality} Borel isomorphism $T : G \to \RR$. Let us define
    \[
        V_\omega^T(g) = \{ x \in G \mid d(x, g) \leq d(x, h) \text{ for all } h \in \omega \text{, and for all } h \in \omega \setminus \{g\}, T(x^{-1}g) < T(x^{-1}h) \}.
    \]

Note that these tie-broken Voronoi cells form a \emph{measurable} partition of $G$. That is, we have traded the Voronoi cells being closed for them being genuinely disjoint. The equivariance property $V^T_{\gamma \omega}(\gamma g) = \gamma V^T_\omega(g)$ still holds as well. For simplicity we will omit the tie-breaking function from the notation.

Then if $\Pi$ is a point process, then we view the ensemble of Voronoi cells $\{ V_\Pi(g) \}_{g \in \Pi}$ as a random measurable partition of $G$. If $\sim_n^\bullet$ is a clumping of $\Pi$, then it gives us a way to \emph{coarsen} the Voronoi partitioning as follows: for each $n$, define
\[
    \mathcal{P}_n = \left\{ \bigcup_{h \sim_n^\Pi g} V_\Pi(h) \right\}_{g \in \Pi}.
\]
Note that $\mathcal{P}_n$ is a refinement of $\mathcal{P}_{n+1}$. See Figure \ref{clumpingfigure}.

\begin{figure}[h]\label{clumpingfigure}
\includegraphics[scale=0.5]{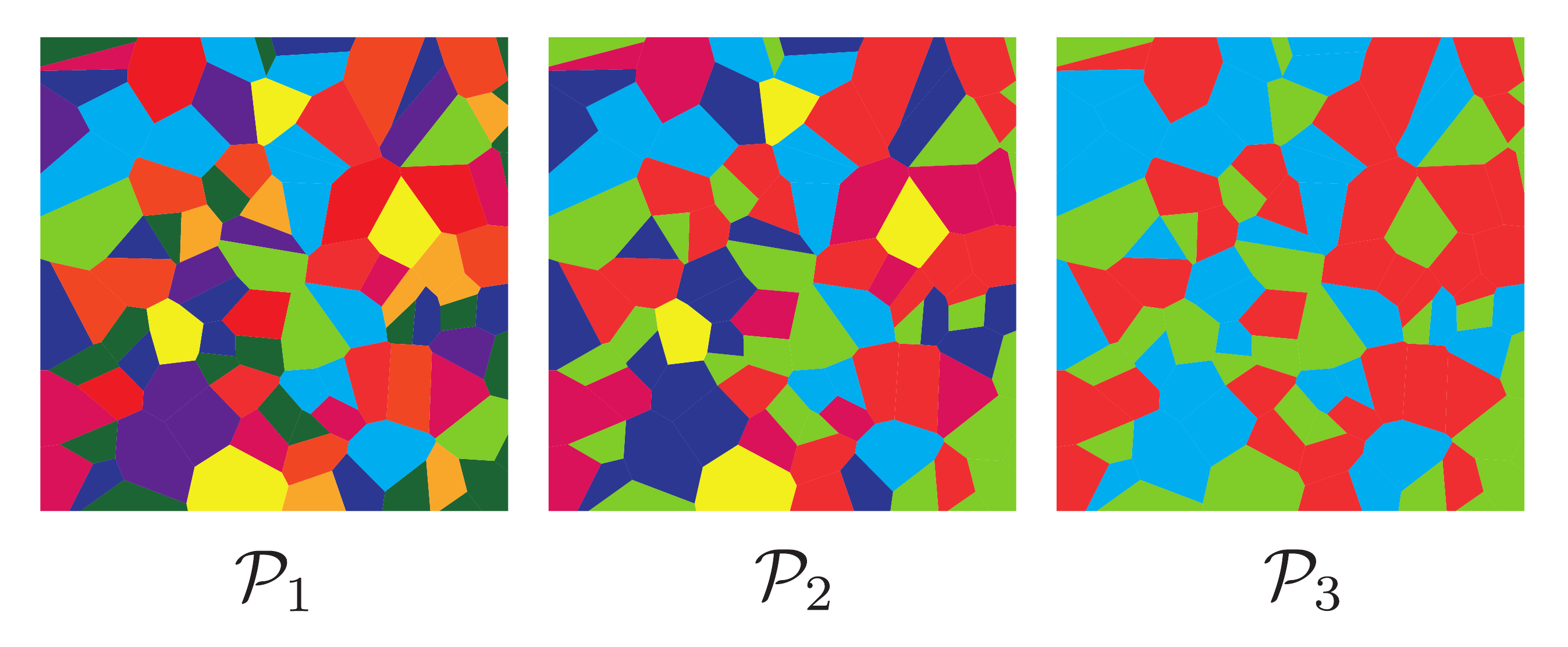}
\centering
\caption{This is how you should visualise the partitions associated to a one-ended clumping: like a sequence of worse and worse mosaics. Here two Voronoi cells receive the same colour if they are in the same equivalence class.}
\end{figure}

Holroyd and Peres were interested in (among other things) constructing particular kinds of connected factor graphs on the Poisson point process on $\RR^n$. Namely, they were interested in constructing one-ended factor trees and directed $\ZZ$s. They proved:

\begin{thm}[Holroyd-Peres\cite{holroyd2003}]
Let $\Pi$ denote a free and ergodic point process in $\RR^n$. Then the following are equivalent:
\begin{itemize}
    \item $\Pi$ admits a locally finite factor graph which is a connected and one-ended tree,
    \item $\Pi$ admits a factor graph which is isomorphic to the directed line $\ZZ$, and
    \item $\Pi$ admits a one-ended clumping.
\end{itemize}
Moreover, the Poisson point process admits a one-ended clumping.

\end{thm}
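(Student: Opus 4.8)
The plan is to prove the three-way equivalence with the one-ended clumping as the hub: I would show that a one-ended clumping, a locally finite one-ended factor tree, and a directed-$\ZZ$ factor graph can each be constructed from any of the others, so that all the implications are covered, and then exhibit such a structure for the Poisson point process. In every one of these conversions the map is deterministic and built canonically from $\omega$, so $G$-equivariance and measurability come for free; the content is purely combinatorial, and the one genuinely non-routine point (addressed in the last paragraph) is producing a construction for the Poisson process that is honestly one-ended.

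The cleanest conversion is \emph{one-ended tree $\Rightarrow$ clumping}. Given a locally finite one-ended factor tree $T$ on $\omega$, deleting any edge of $T$ leaves exactly one infinite component (two infinite components would each contain a ray, giving two ends), and likewise deleting any vertex $v$ leaves exactly one infinite component, attached to a unique neighbour $p(v)$; the rays $v, p(v), p^2(v), \ldots$ all run out to the single end. Put $x \sim_n y$ iff $p^n(x) = p^n(y)$. This is an equivalence relation; it is ascending because $p^n(x) = p^n(y)$ forces $p^{n+1}(x) = p^{n+1}(y)$; each class $\{y : p^n(y) = v\}$ lies on the finite side of the edge $\{v, p(v)\}$, hence is finite; and any two vertices are eventually related since their rays to the common end eventually coincide. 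Taking $\sim_n$ to be the disjoint union of complete graphs on these classes gives a one-ended clumping. For \emph{clumping $\Rightarrow$ one-ended tree} I would build the tree recursively: a canonical spanning tree on each $\sim_1$-class, and then, for each $n$, a minimal canonical set of edges joining the $\sim_n$-subclusters of each $\sim_{n+1}$-cluster; the union is a spanning tree in the limit, and it is one-ended precisely because every new edge glues a \emph{finite} piece onto an existing piece, so no second end can form --- verifying this last point is where care is needed. The two conversions between a clumping and a directed $\ZZ$ are symmetric bookkeeping of the same flavour: a clumping linearly orders each infinite component by recursively ordering, inside each $\sim_{n+1}$-cluster, its $\sim_n$-subclusters (using the linear order on $\RR^n$ with a fixed measurable tie-break to order siblings, refined by the level-$n$ order within each subcluster), and one checks the limiting order has immediate successors and predecessors and no endpoint, i.e.\ is a copy of $\ZZ$; conversely a directed $\ZZ$ is cut into finite blocks at an equivariantly chosen sparse set of markers on the line (for instance the points that are local maxima of their outgoing edge length), the blocks are contracted, and the procedure is iterated.

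Finally, a one-ended clumping for the Poisson point process $\Pi$ on $\RR^n$. I would construct it by \emph{iterated nearest-neighbour clustering with contraction}: from a locally finite point set, form the nearest-neighbour graph (each point joined to its nearest point); its components are almost surely finite and of size at least two, so they partition the set into finite clusters, each of which is contracted to a canonical representative (say the lexicographically least point, $\Pi$ being in general position almost surely). The representatives form a new factor point set of strictly smaller intensity, and iterating produces an ascending sequence of finite clumps; once one checks the $n$-th generation has intensity tending to $0$, the clumps spread over all of $\RR^n$. The genuine obstacle is \textbf{one-endedness}: one must show that every fixed pair of points eventually shares a clump --- equivalently, that the limiting partition of $\omega$ is trivial --- ruling out the \emph{a priori} possibility that the construction freezes into infinitely many unbounded pieces. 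This is where the special structure of the Poisson process and the amenability of $\RR^n$ have to be used: via a quantitative estimate that a nearby pair is forced to merge within a bounded number of generations with overwhelming probability, together with ergodicity of $\Pi$ and a mass-transport argument excluding a nontrivial invariant end structure. I expect this one-endedness verification to be the main difficulty; granting it, the equivalences above promote the clumping to the one-ended factor tree and to the directed $\ZZ$ at no extra cost, and the theorem follows.
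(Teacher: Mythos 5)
The paper does not prove this statement: it cites Holroyd--Peres and proves instead (and only needs) the sharper Theorem~\ref{clumpingsexist}, whose proof goes through the Palm equivalence relation and the Connes--Feldman--Weiss theorem. In that framework, the one-ended clumping, the one-ended factor tree, and the directed-$\ZZ$ factor graph are simply the point-process readings of hyperfiniteness, a one-ended Borel treeing, and a $\ZZ$-graphing of the Palm equivalence relation, and the implications among them (and the existence of a clumping for the Poisson on $\RR^n$) follow from amenability, CFW, and Dye/Ornstein--Weiss with no bespoke percolation-style estimates. Your proposal takes the classical direct route of Holroyd--Peres, which is legitimate, but it does not match what the paper actually does.

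Within your proposal there is a gap beyond the acknowledged one. For ``clumping $\Rightarrow$ one-ended tree'' you argue that the spanning tree built by joining subclusters stage by stage is one-ended ``because every new edge glues a finite piece onto an existing piece, so no second end can form.'' That reasoning is false as stated: a monotone union of finite subtrees can perfectly well have two ends (e.g.\ $\ZZ = \bigcup_k [-k,k]$), and one can arrange a clumping on the set $\ZZ$ whose naive spanning tree is exactly the two-ended line. The correct construction does not take arbitrary spanning trees but builds a \emph{parent pointer}: choose canonically a representative of each $\sim_n$-class, and let each non-representative point at level $n$ point to the representative of its $\sim_{n}$-class (using the least $n$ where $x$ fails to be the representative). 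This yields a forest in which removing any vertex leaves exactly one infinite component, and one-endedness of the clumping makes it connected, so it is a one-ended tree. Similarly, for ``clumping $\Rightarrow \ZZ$'' you do get a discrete linear order by nesting the orders, but ruling out $\NN$ or $-\NN$ requires an explicit appeal to the Mass Transport Principle (a factor selecting a single point from an infinite ergodic process is impossible), which you do not make. Finally, the one-endedness of the iterated nearest-neighbour clumping on the Poisson process is, as you say, the crux; your sketch (quantitative merging plus mass transport) is the right idea but is entirely deferred, and it is exactly the step that the paper's CFW route renders unnecessary.
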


This was later extended by \'{A}d\'{a}m Tim\'{a}r, who also answered a question of Steve Evans about possible factor graph structures on point processes:

\begin{thm}[Tim\'{a}r \cite{timar2004}]
Let $\Pi$ denote a free and ergodic point process in $\RR^n$. Then $\Pi$ admits a one-ended clumping. Moreover, $\Pi$ admits a connected factor graph isomorphic to $\ZZ^d$, for any $d \in \NN$.
\end{thm}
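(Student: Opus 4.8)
The plan is to derive Tim\'ar's theorem from the correspondence theorem (Theorem~\ref{correspondencetheorem}) in exactly the way the paper derives Theorem~\ref{amenabletheorem}; indeed the $\RR^n$ case is a special instance of that theorem. Since $\RR^n$ is amenable, unimodular, noncompact and second countable, and $\Pi$ is free, ergodic and of finite intensity, we may form the Palm equivalence relation $(\MMo,\Rel,\mu_0)$. By Proposition~\ref{pmpgroupoid} it is a pmp countable Borel equivalence relation; by Proposition~\ref{transferprinciple} it is ergodic; and since $\Pi$ is free it has positive intensity, hence is non-empty almost surely, hence (as $\RR^n$ is noncompact) infinite almost surely, so $\Rel$ is aperiodic. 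By Remark~\ref{cbercorrespondence}, connected factor graphs of $\Pi$ correspond to connected Borel graphings of $\Rel$, and a free pmp action of a finitely generated group $\Gamma$ generating $\Rel$ yields a connected factor graph of $\Pi$ which, as an edge-labelled directed graph, is isomorphic to a Cayley graph of $\Gamma$. Thus both assertions of the theorem become assertions about $\Rel$.

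The crux is that $\Rel$ is \emph{hyperfinite}, i.e. an increasing union $\Rel=\bigcup_n \Rel_n$ of finite Borel subequivalence relations. This is where amenability of $\RR^n$ is used. For a free point process, $\MMo$ is a cross-section of the action $\RR^n\acts(\MM,\mu)$: each orbit $G\omega$ meets $\MMo$ in the set $\{g^{-1}\omega : g\in\omega\}$, whose parametrising set $\omega^{-1}$ is discrete in $G$; and $\Rel$ is the associated cross-section equivalence relation. Cross-section equivalence relations of pmp actions of amenable lcsc groups are amenable, hence hyperfinite by Connes--Feldman--Weiss; equivalently, this is the Ornstein--Weiss theorem that the paper already invokes for Theorem~\ref{amenabletheorem}. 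I expect this step --- transferring amenability of the ambient group $G$ to the Palm equivalence relation --- to be the main obstacle; everything downstream is formal.

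Granting hyperfiniteness, the one-ended clumping is immediate. Transport the exhaustion $\Rel_n$ through the parametrisation $p_\omega\colon\omega\to[\omega]_\Rel$, $p_\omega(g)=g^{-1}\omega$, of Remark~\ref{aperiodic}: for $\mu_0$-a.e.\ $\omega$ this produces equivalence relations $\sim_n^\omega$ on $\omega$ that are ascending, have finite classes (so their components are finite complete graphs), and whose union over $n$ is all of $\omega\times\omega$ within the single rerooting class --- precisely one-endedness in the sense of Definition~\ref{clumping}. Spreading these over $\MM$ via the Borel correspondence of Section~\ref{borelcorrespondences} gives a one-ended clumping of $\Pi$.

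For the $\ZZ^d$ factor graph, observe that $\Rel$ is ergodic, aperiodic, pmp and hyperfinite, so by Dye's theorem it is, up to orbit equivalence, the unique such equivalence relation. In particular it is generated by a free, ergodic, pmp action of $\ZZ^d$: a suitable Kronecker action $\ZZ^d\acts(\mathbb{T}^d,\mathrm{Leb})$ generates an ergodic aperiodic hyperfinite pmp relation, to which $\Rel$ is therefore orbit equivalent. Feeding $\Gamma=\ZZ^d$ with its standard generating set $\{\pm e_1,\dots,\pm e_d\}$ into Remark~\ref{cbercorrespondence} yields a connected factor graph of $\Pi$ isomorphic to $\Cay(\ZZ^d,\{\pm e_1,\dots,\pm e_d\})$, i.e. the $d$-dimensional grid. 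Since $d\in\NN$ was arbitrary, this proves the theorem.
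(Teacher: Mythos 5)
Your derivation is correct and takes the paper's own approach: Timár's theorem is the $G=\RR^n$ instance of Theorem~\ref{amenabletheorem} together with Theorem~\ref{clumpingsexist}, proved exactly via hyperfiniteness of the Palm equivalence relation (Connes--Feldman--Weiss) and an orbit equivalence with a $\ZZ^d$-action (Dye/Ornstein--Weiss) transferred back through Remark~\ref{cbercorrespondence}. Two small glosses worth noting: you quietly add a finite-intensity hypothesis, which the Palm machinery genuinely needs but the theorem as stated omits, and Connes--Feldman--Weiss ($\mu$-amenable implies $\mu$-hyperfinite) is not ``equivalently'' the Ornstein--Weiss orbit-equivalence theorem you conflate it with --- the paper invokes the former to obtain hyperfiniteness and the latter to produce the orbit equivalence, at distinct steps.
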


It is clear from these works that the amenability of the underlying space $\RR^n$ is important, but the connection was not fully elucidated. We will prove

\begin{thm}\label{clumpingsexist}
    
    If $G$ is amenable, then all of its free point processes admit one-ended clumpings. 
    Conversely, if $G$ has a free point process that admits a one-ended clumping, then $G$ is amenable. The same is true for marked point processes.
    
\end{thm}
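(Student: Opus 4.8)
The plan is to route the whole statement through the Palm equivalence relation $(\MMo, \Rel, \mu_0)$ of a free point process, using the dictionary between factor graphs and subsets of $\Marrow$ from Theorem \ref{correspondencetheorem} (and its principal incarnation in Remark \ref{cbercorrespondence}). The key observation to isolate is that \emph{a one-ended clumping of $\Pi$ is precisely a witness that $\Rel$ is hyperfinite}; once this is in hand, both directions become the two-way transfer of amenability between $G$ and $\Rel$.

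For the translation, let $\Pi$ be a free point process with law $\mu$ and Palm measure $\mu_0$, so that $\mu_0$ concentrates on the aperiodic configurations $\Maper_0$ and one may use the parametrisation $p_\omega : \omega \to [\omega]_\Rel$ of Remark \ref{aperiodic}. A sequence of factor graphs $\sim_n^\bullet$ corresponds, via Theorem \ref{correspondencetheorem}, to subsets $\mathscr{A}_n \subseteq \Marrow$. I would then check that the three clumping axioms translate as follows: ``the components of $\sim_n$ are finite complete graphs'' says that $\mathscr{A}_n$ is the graphing of a finite Borel subequivalence relation $\Rel_n \subseteq \Rel$; ``ascending'' says $\Rel_1 \subseteq \Rel_2 \subseteq \cdots$; and, transporting connectivity of $x, y \in \omega$ across $p_\omega$, ``one-endedness'' says that for $\mu_0$-almost every $\omega$ every point of $[\omega]_\Rel$ is eventually $\Rel_n$-equivalent to $\omega$, i.e. $\bigcup_n \Rel_n = \Rel$ modulo null sets. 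The converse translation is the same read backwards. Hence $\Pi$ admits a one-ended clumping if and only if $\Rel$ is hyperfinite.

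The two directions now become the transfer of amenability. If $G$ is amenable, the point process action $G \acts (\MM, \mu)$ is an amenable action, and $\Rel$ -- the countable pmp equivalence relation on the cross-section $\MMo$ (see the appendix) -- inherits amenability, hence is hyperfinite by Connes-Feldman-Weiss and Ornstein-Weiss (\cite{ornstein1987entropy}); so all free point processes on $G$ admit one-ended clumpings. (If $\mu$ is not ergodic one first applies the ergodic decomposition, noting that each Palm ergodic component is itself a Palm measure.) Conversely, if a free $\Pi$ admits a one-ended clumping then $\Rel$ is hyperfinite, hence amenable; since $\Pi$ is free, $\Rel$ realises the essentially free action $G \acts (\MM, \mu)$ as its cross-section equivalence relation, so that action is amenable, and an lcsc group admitting an essentially free amenable pmp action is amenable. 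Finally, the marked case is word-for-word the same with $\Xi^\MMo$ in place of $\MMo$; Theorem \ref{correspondencetheorem} was stated so as to cover marked processes directly.

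The routine part is the bookkeeping in the translation step, which is mechanical once one has the correspondence and the parametrisation $p_\omega$. The step I expect to be the real obstacle -- and the place where freeness of the point process is genuinely used -- is the two-way passage of amenability between the non-discrete group $G$ and the discrete equivalence relation $\Rel$ on the cross-section $\MMo$: one needs that cross-section equivalence relations of amenable actions are amenable, and, conversely, that freeness lets $\Rel$ remember enough of $G$ to force $G$ amenable when $\Rel$ is. This is precisely what the cross-section discussion in the appendix is for, and invoking it carefully in the non-discrete setting is the crux.
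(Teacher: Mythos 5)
Your proposal and the paper agree on the key structural observation — that a one-ended clumping of a free $\Pi$ is exactly a hyperfiniteness witness for the Palm equivalence relation $(\MMo,\Rel,\mu_0)$, and the paper states this explicitly just before its proof — but the two arguments then transfer amenability between $G$ and $\Rel$ by genuinely different means. The paper gives direct, self-contained constructions: in the ``clumping $\Rightarrow$ $G$ amenable'' direction it builds a left-invariant mean on $G$ by averaging a bounded function over $n$-th equivalence classes at the point allocated to the origin, using invariance of the process and one-endedness to control the shift by $g$ (this is the Benjamini--Lyons--Peres--Schramm trick); and in the ``$G$ amenable $\Rightarrow$ clumping'' direction it takes a fixed left-invariant mean $m$ on $G$, extends a function on $[\omega]_\Rel$ to a function on $G$ constant on Voronoi cells, and sets $p_\omega(f)=m(F)$, proving $\mu_0$-amenability and then invoking Connes--Feldman--Weiss. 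You instead cite the cross-section folklore theorem from the appendix wholesale. Your route is shorter, but you should flag one point: that folklore theorem, as stated, concerns \emph{lacunary} cross-sections, and $\MMo$ is explicitly not lacunary (e.g.\ for the Poisson process). This is patchable — pass to a lacunary sub-cross-section (say, the $\delta$-thinning $\theta^\delta$ restricted appropriately, or invoke Theorem \ref{crosssectionsexist}), and note that amenability of cbers is a stable/compression invariant so it transfers between $\Rel$ on $\MMo$ and $\Rel$ on any lacunary cross-section of the same action — but it is a real step you need to write down. The paper's direct mean constructions side-step the lacunarity issue entirely and also make the role of the Mass Transport Principle and the Voronoi machinery transparent, which is why the author chose that route even though the folklore theorem is cited in the appendix precisely as context.
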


Recall the following:

\begin{defn}

A pmp cber $(X, \Rel, \mu)$ is \emph{$\mu$-hyperfinite} if there exists an increasing sequence $\Rel_1 \subseteq \Rel_2 \subseteq \cdots$ of subequivalence relations of $\Rel$ such that for $\mu$ almost every $x \in X$,
\begin{itemize}
    \item for all $n \in N$, $[x]_{\Rel_n}$ is finite, and 
    \item $[x]_\Rel = \bigcup_n [x]_{\Rel_n}$.
\end{itemize}

Denote by $\widetilde{\mu}$ the lifted measure of $\mu$ to $X \times X$.

A pmp cber $(X, \Rel, \mu)$ is \emph{$\mu$-amenable} if there exists for each $x \in X$ a normalised positive functional $p_x \in (\ell^\infty ([x]_\Rel))^*$ (a \emph{local mean}) such that $p_x = p_y$ for $\widetilde{\mu}$ almost every $(x, y) \in X \times X$, and such that the function $x \mapsto p_x\left( \restr{\phi}{[x]_\Rel}\right)$ is measurable for all $\phi \in L^\infty(X, \mu)$.
\end{defn}

In the measured category, these concepts are equivalent (see Chapter II Section 10 of \cite{kechris2004topics}):

\begin{thm}[Connes-Feldman-Weiss \cite{connes1981amenable}]

A pmp cber $(X, \Rel, \mu)$ is $\mu$-hyperfinite if and only if it is $\mu$-amenable.

\end{thm}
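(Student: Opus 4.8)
The plan is to recognise a one-ended clumping as exactly a witness of hyperfiniteness of the Palm equivalence relation, and then to read off amenability of $G$ from there.

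\textbf{Step 1 (clumpings $=$ hyperfiniteness witnesses).} First I would fix a free point process $\Pi$ with law $\mu$ and Palm measure $\mu_0$. By Remark \ref{cbercorrespondence}, factor graphs of $\Pi$ are the same as Borel graphings of the Palm equivalence relation $(\MMo,\Rel,\mu_0)$, under the identification of the vertex set $\omega$ with the orbit $[\omega]_\Rel$ via $p_\omega(g)=g^{-1}\omega$. Under this dictionary a factor graph each of whose connected components is a finite complete graph is precisely a finite Borel subequivalence relation $\Rel'\subseteq\Rel$ (the partition into $\Rel'$-classes is the partition into components, and "the complete graph on each class" recovers the factor graph), and this is a Borel datum by Theorem \ref{correspondencetheorem}. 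Consequently a one-ended clumping $\sim_1^\bullet\subseteq\sim_2^\bullet\subseteq\cdots$ of $\Pi$ corresponds to an increasing sequence $\Rel_1\subseteq\Rel_2\subseteq\cdots$ of finite Borel subequivalence relations of $\Rel$, and the one-endedness clause ($\forall x,y\ \exists N$ with $x\sim_N y$) becomes $\bigcup_n\Rel_n=\Rel$. Definition \ref{clumping} asks for all of this only $\mu$-a.e.; since the relevant conditions are rootshift invariant, Proposition \ref{transferprinciple} together with Lemma \ref{extensionlemma} lets me pass freely between "$\mu$-a.e. on $\MM$" and "$\mu_0$-a.e. on $\MMo$". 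Hence
$$\Pi \text{ admits a one-ended clumping} \iff (\MMo,\Rel,\mu_0) \text{ is } \mu_0\text{-hyperfinite} \iff (\MMo,\Rel,\mu_0) \text{ is } \mu_0\text{-amenable},$$
the last step being Connes--Feldman--Weiss.

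\textbf{Step 2 (the Palm relation detects amenability of $G$).} Next I would use that the Palm groupoid $(\Marrow,\muarrow)$ is the restriction of the transformation groupoid $G\ltimes(\MM,\mu)$ to the complete lacunary section $\MMo$ — which meets $\mu$-a.e. orbit in a countable discrete set — with $\mu_0$ playing the role of the induced transverse measure; this is the cross-section picture developed in the appendix. Amenability of a measured groupoid is invariant under restriction to, and reconstruction from, such a complete section, so $(\MMo,\Rel,\mu_0)$ is amenable if and only if $G\ltimes(\MM,\mu)$ is amenable. Since the point process action $G\acts(\MM,\mu)$ is essentially free and pmp, the latter holds exactly when $G$ is amenable: if $G$ is amenable every pmp action is amenable, and conversely a group admitting an essentially free amenable pmp action is amenable by Zimmer's theorem. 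Chaining with Step 1 gives, for \emph{any} free point process $\Pi$ on $G$,
$$\Pi \text{ admits a one-ended clumping} \iff G \text{ is amenable},$$
which is precisely both directions of the theorem. (If $G$ is amenable it is automatically unimodular, so Proposition \ref{pmpgroupoid} applies and the Palm groupoid is genuinely pmp; the implication needed for the converse, "$G\ltimes(\MM,\mu)$ amenable $\Rightarrow G$ amenable", requires no unimodularity hypothesis, and Step 1 does not use unimodularity either.)

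\textbf{Step 3 (marked processes) and the main obstacle.} The marked case is verbatim: Theorem \ref{correspondencetheorem}, Remark \ref{cbercorrespondence}, the Palm measure, and the cross-section picture all hold with $\Xi^\MM$, $\Xi^\MMo$ in place of $\MM$, $\MMo$, so the same proof applies. The one genuinely substantial point is Step 2 — identifying the abstractly defined Palm groupoid, which is carried on the $\mu$-null set $\MMo$, with the restriction of the transformation groupoid to a lacunary complete section, and then invoking the invariance of amenability under this operation together with Zimmer's rigidity theorem. Step 1 is a translation of definitions and Step 3 is routine.
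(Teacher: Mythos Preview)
Your proposal does not address the stated theorem. The statement given is the Connes--Feldman--Weiss theorem itself, which the paper does \emph{not} prove: it is quoted as an external result, with a citation to \cite{connes1981amenable} and a pointer to Chapter~II Section~10 of \cite{kechris2004topics}. Your write-up is instead a proof of Theorem~\ref{clumpingsexist} (the equivalence between amenability of $G$ and the existence of one-ended clumpings), and indeed you explicitly \emph{invoke} Connes--Feldman--Weiss as a black box in Step~1. So as a proof of the stated theorem there is nothing here.

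If your actual target was Theorem~\ref{clumpingsexist}, then your argument is correct but takes a genuinely different route from the paper. The paper argues each direction by hand: from a clumping it builds an invariant mean on $G$ by averaging a function over the $n$th clump of a sampled point and taking an ultralimit; conversely, from a left-invariant mean $m$ on $G$ it manufactures local means on $[\omega]_\Rel$ by extending functions on the orbit to $G$ via the Voronoi tessellation and feeding them to $m$, then applies CFW once to convert $\mu_0$-amenability into a hyperfinite decomposition. Your approach instead packages everything into the cross-section formalism: clumpings $\leftrightarrow$ hyperfiniteness of the Palm relation (via the factor-graph dictionary), then amenability of the Palm relation $\leftrightarrow$ amenability of $G\ltimes(\MM,\mu)$ $\leftrightarrow$ amenability of $G$, citing invariance of groupoid amenability under restriction to a complete section and Zimmer's theorem. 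This is cleaner and more structural, but imports heavier machinery (the section/restriction invariance and Zimmer) that the paper deliberately avoids in favour of explicit constructions; the paper's version has the advantage that one sees concretely where the mean on $G$ comes from.
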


Under the correspondences we've described, a free point process $\Pi$ admits a one-ended clumping \emph{if and only if} its Palm equivalence relation $(\MMo, \Rel, \mu_0)$ is $\mu_0$-hyperfinite. This observation is what led to the present paper.

\begin{proof}[Proof of Theorem \ref{clumpingsexist}]
    The proof will be the same for marked and unmarked processes, so we work with unmarked ones for notational convenience.
    
    We first describe how a one-ended clumping can be used to construct an invariant mean on $G$ using a fairly standard technique, see Theorem 5.1 of \cite{benjamini1999group}.
    
    Let $\mu$ be a free point process, and fix a clumping $(\sim_n^\bullet)$ of it. If $f : G \to \RR$ is an essentially bounded function, define
    
    \[
        m_n(f) = \frac{1}{\intensity \mu} \EE_\mu \left[ \sum_{y \sim_n^\omega X(\omega)} \frac{f(y)}{\#\{ y \sim_n^\omega X(\omega)\} } \right],
    \]
    that is, we average the values of $f$ over the points in $X(\omega)$'s $n$th equivalence class.
    
    By invariance of the point process, one can see that
     \[
        m_n(g \cdot f) = \frac{1}{\intensity \mu} \EE_\mu \left[ \sum_{y \sim_n^\omega X^g(\omega)} \frac{f(y)}{\#\{ y \sim_n^\omega X^g(\omega)\} } \right].
    \]
    
    One-endedness of the clumping implies that for $n$ sufficiently large, $\{ y \sim_n^\omega X(\omega)\} = \{ y \sim_n^\omega X^g(\omega)\}$. So any ultralimit of the $m_n$ defines a left-invariant mean on $G$.

    For the other implication, fix a left-invariant mean $m \in (L^\infty(G))^*$. Given a bounded and positive function $f : [\omega]_\Rel \to \RR$, we extend it to a function $F : G \to \RR$ by making $F$ constant on the Voronoi cells, and averaging the values for those $g \in G$ that belong to multiple Voronoi cells\footnote{Note that each point only belongs to finitely many Voronoi cells, by local finiteness of the configuration}. Define
    \[
        F(g) = \sum_{\{x \in \omega \mid g \in V_\omega(x)\}} \frac{f(x^{-1}\omega)}{\#\{x \in \omega \mid g \in V_\omega(x)\}}.
    \]
    Now for each $\omega \in \MMo$ we define a mean on $[\omega]_\Rel$ by $p_\omega(f) = m(F)$. Then $p_\omega$ only depends on the equivalence class of $\omega$ by left-invariance of $m$, and satisfies the measurability requirement.
\end{proof}

\begin{remark}

A version of this theorem was independently proved by Paquette in \cite{paquette2018distributional}. He looks specifically at invariant point processes on Riemannian symmetric spaces and (among other things) proves that the Delauney triangulation of any point process on such a space is a unimodular random network which is \emph{anchored amenable} if and only if the ambient space is amenable. 

\end{remark}

\begin{proof}[Proof of Theorem \ref{amenabletheorem}]

We have seen from Theorem \ref{clumpingsexist} that $G$ is amenable if and only if one (and then all) Palm equivalence relations of free point processes are hyperfinite almost everywhere.

Note that the equivalence classes in $(\MMo, \Rel, \mu_0)$ are infinite, as the ambient group is noncompact.

Let $\Gamma$ be an infinite amenable group, finitely generated by $S \subset \Gamma$. 

Since $(\MMo, \Rel, \mu_0)$ is $\mu_0$-hyperfinite we can apply the Ornstein-Weiss theorem (see \cite{kechris2004topics} Chapter 2 Section 6) to find an \emph{orbit equivalence} $\phi : (\MMo, \mu_0) \to ([0,1]^\Gamma, \text{Leb}^{\otimes \Gamma})$, that is, a measure space isomorphism satisfying $\phi([\omega]_\Rel) = \Gamma \phi(\omega)$ for $\mu_0$ almost every $\omega$. We simply use this isomorphism to transfer the graph, using the fact that $\omega$ is bijectively equivalent with its rerooting equivalence class $[\omega]_\Rel$: define
\[
    \mathscr{G}(\omega) = \{ (g, h) \in \omega \times \omega \mid \exists s \in S \text{ such that } \phi(g^{-1}\omega) = \phi(h^{-1}\omega) s \}.
\]
Then $\mathscr{G}$ is the desired factor graph.
\end{proof}

\subsubsection{Nonamenability, the Poisson point process, and spectral gap}

We now describe another point process theoretic characterisation of nonamenability.

\begin{defn}

Let $G \acts (X, \mu)$ be a measure preserving (mp) action. Its \emph{Koopman representation} is the unitary representation $\pi$ of $G$ on $L^2(X, \mu)$ defined by
\[
    (\pi(g) f)(x) := f(g^{-1} x).
\]
We simply write $L^2(X)$ if the measure $\mu$ is understood.

Let $L^2_0(X) = \{ f \in L^2(X) \mid \int_X f(x) d\mu(x) = 0 \}$ denote the $G$-invariant subspace of mean zero functions. Note that $L^2_0(X) = L^2(X)$ if the underlying measure $\mu$ has $\mu(X) = \infty$.

An \emph{almost invariant sequence} in $L^2_0(X)$ is a sequence of unit vectors $f_n$ such that
\[
    \lim_{n \to \infty} \norm{\pi(g) f_n - f_n} = 0 \text{ for all } g \in G.
\]

We say the the action $G \acts (X, \mu)$ \emph{has spectral gap} if it has no almost invariant sequences.

\end{defn}
For further details, see the survey paper of Bekka \cite{MR3888695}.

Recall that $G$ is amenable if and only if its regular representation contains an almost invariant sequence.

\begin{prop}\label{spectralgap}

A group $G$ is nonamenable if and only if the Poisson point process action $G \acts (\MM, \PPP)$ on it has spectral gap.

\end{prop}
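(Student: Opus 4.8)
The plan is to exploit the Mecke--Slivnyak description of the Palm measure of the Poisson point process (Example \ref{palmofpoisson}) together with the relationship between spectral gap of the point process action and amenability of the Palm groupoid. For the easy direction, suppose $G$ is amenable. Then by Theorem \ref{clumpingsexist} (or directly: the Palm equivalence relation of the Poisson is $\mu_0$-hyperfinite, hence $\mu_0$-amenable by Connes--Feldman--Weiss), the Palm groupoid is amenable. Amenability of the Palm groupoid should give an almost invariant sequence in $L^2_0(\MM, \PPP)$: roughly, a F\o lner sequence for the equivalence relation on $\MMo$ transfers, via the Palm/CLMM correspondence and the fact that $\PPP$ and $\PPP_0$ are related by adding the root, to a sequence of approximately invariant unit vectors on $\MM$ with mean zero (using that $\PPP(\MM)$ is a probability measure but $L^2_0$ is a proper subspace here since $\PPP$ is finite). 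So if $G$ is amenable, the action has no spectral gap.

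For the main direction, assume $G$ is nonamenable; we must produce spectral gap for $G \acts (\MM, \PPP)$. The cleanest route is to identify the Koopman representation of the Poisson action on $L^2_0(\MM, \PPP)$ via the \emph{Wiener--It\^o chaos decomposition}: there is a $G$-equivariant unitary isomorphism
\[
    L^2_0(\MM, \PPP) \;\cong\; \bigoplus_{n \geq 1} L^2_{\mathrm{sym}}(G^n, \lambda^{\otimes n}),
\]
where $G$ acts diagonally on each $L^2_{\mathrm{sym}}(G^n)$, i.e.\ the $n$-th chaos is a subrepresentation of $L^2(G^n, \lambda^{\otimes n}) \cong L^2(G, \lambda)^{\otimes n}$. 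Since $G$ is nonamenable, the regular representation $L^2(G, \lambda)$ does not weakly contain the trivial representation; by a standard tensor-power argument, neither does any tensor power $L^2(G,\lambda)^{\otimes n}$, and hence neither does the direct sum over $n \geq 1$. Therefore $L^2_0(\MM, \PPP)$ does not weakly contain the trivial representation, which is exactly the statement that the action has spectral gap. Conversely, if $G$ were amenable, the first chaos $L^2(G,\lambda)$ already contains an almost invariant sequence, yielding one in $L^2_0(\MM,\PPP)$ and precluding spectral gap --- this reproves the easy direction and shows the chaos approach handles both implications uniformly.

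The main obstacle is justifying the equivariant chaos decomposition in the locally compact (nondiscrete, possibly non-abelian) setting and verifying that $G$ acts on the $n$-th chaos as the diagonal action on $L^2_{\mathrm{sym}}(G^n)$; in the classical treatments this is stated for $\RR^n$ or for abstract measure spaces without a group action, so one needs to check equivariance of the multiple Wiener--It\^o integrals against the translation action (this is straightforward from invariance of $\lambda$, but should be spelled out). The second point requiring care is the tensor-power fact: that $\lambda_G \not\succ 1_G$ implies $\lambda_G^{\otimes n} \not\succ 1_G$ for all $n$, and that this passes to the infinite direct sum --- this is classical (e.g.\ via the fact that a representation weakly containing $1$ together with its tensor powers would make $G$ amenable, or directly using that weak containment of $1_G$ in $\pi$ is equivalent to $\pi$ not being ``tempered'' away from $1$), but the cleanest citation is that $1_G \prec \pi \otimes \bar\pi$ forces nothing, whereas $1_G \prec \lambda_G^{\otimes n}$ would give $1_G \prec \lambda_G$ by associativity of weak containment under tensoring with $\lambda_G$. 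I would present the argument via: nonamenability $\iff$ $1_G \not\prec \lambda_G$ $\implies$ $1_G \not\prec \bigoplus_{n\geq 1}\lambda_G^{\otimes n} \cong L^2_0(\MM,\PPP)$ $\iff$ spectral gap.
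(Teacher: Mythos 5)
Your approach is essentially the paper's: the paper cites the Last--Penrose decomposition $L^2(\MM, \PPP) = \bigoplus_{n \geq 0} S^n(L^2(G))$, which is exactly the equivariant Wiener--It\^o chaos decomposition you invoke, and both proofs reduce to a weak-containment statement about tensor powers and direct sums of $\lambda_G$. The step that does not survive as you have written it is ``$1_G \not\prec \lambda_G^{\otimes n}$ for each $n$, hence $1_G \not\prec \bigoplus_{n \geq 1}\lambda_G^{\otimes n}$'': a countable direct sum of representations none of which weakly contains the trivial can still weakly contain the trivial --- for $G = \ZZ$ take $\pi_n : k \mapsto e^{2\pi i k/n}$ for $n \geq 2$; each $1 \not\prec \pi_n$, yet the unit vectors of the $n$th summand form an almost-invariant sequence in $\bigoplus_n \pi_n$. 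What saves the argument is that each $\lambda_G^{\otimes n}$ is not merely gap-having but is actually a multiple of $\lambda_G$ (Fell absorption: $\lambda_G \otimes \pi \cong \lambda_G^{\oplus \dim \pi}$), so the whole chaos $\bigoplus_{n\geq 1}S^n(L^2(G))$ embeds as a subrepresentation of $\lambda_G^{\oplus \NN}$; one then finishes with the elementary fact that $\pi$ admits almost invariant vectors if and only if $\pi^{\oplus \NN}$ does, which is exactly how the paper concludes. You gesture at absorption and ``temperedness,'' so the right idea is present in your asides, but it has to become the actual content of the direct-sum step --- the term-by-term version you write in the final summary line is genuinely false in general. (Minor: $\bigoplus_{n\geq 1}\lambda_G^{\otimes n}$ is not isomorphic to $L^2_0(\MM,\PPP)$, which is only the symmetric part; this does not affect the weak containment.)
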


If $G$ is discrete, then one should interpret the above statement as referring to the Bernoulli shift $G \acts (\{0,1\}^G, \texttt{Ber}(p)^{\otimes G})$. In this case, the proposition is proved by expressing $L^2_0(\{0,1\}^G)$ as a direct sum of copies of the regular representation $\ell^2(G)$ and subregular representations. See Section 2.3.1 of Kerr and Li's book \cite{kerr2016ergodic} for further details, and Lyons-Nazarov\cite{MR2825538} for a particularly cool application of this fact.

In the nondiscrete case we appeal to an alternative decomposition of $L^2(\MM, \PPP)$ proved by Last and Penrose in \cite{last2011poisson}.

If $\mathcal{H}$ is a Hilbert space over $\RR$, we denote its $n$th tensor power by $\mathcal{H}^{\otimes n}$, with the convention that $\mathcal{H}^0 = \RR$. We denote by $S^n(\mathcal{H})$ the subspace generated by the symmetric tensors.

The Koopman representation turns products of measure spaces into tensor products: that is, $L^2((X_1, \mu_1) \otimes (X_2, \mu_2)) = L^2(X_1, \mu_1) \otimes L^2(X_2, \mu_2)$. In the analogous identification for $L^2(G, \lambda)^{\otimes n}$, the symmetric tensors $S^n(L^2(G))$ are identified with the space of $L^2$ functions on $G^n$ which are invariant under permutation of their variables.

\begin{thm}[Last-Penrose\cite{last2011poisson}]

Let $\PPP$ denote the Poisson point process on $G$ of unit intensity. Then the Koopman representation decomposes (as a unitary representation) as
\[
    L^2(\MM, \PPP) = \bigoplus_{n \geq 0} S^n(L^2(G)).
\]
\end{thm}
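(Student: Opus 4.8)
The plan is to establish this as the analogue of the classical Wiener–Itô chaos decomposition for Poisson point processes, which is precisely the content of the cited theorem of Last and Penrose. First I would recall the \emph{Poisson (multiple) stochastic integral}: for a symmetric function $f \in L^2(G^n)$ one defines $I_n(f)$ by integrating $f$ against the compensated Poisson measure $\widehat{\Pi} = \Pi - \lambda$ in each of its $n$ variables (first on the off-diagonal, then taking the $L^2$-closure). The key isometry is $\EE_\PPP[I_m(f) I_n(g)] = \delta_{mn}\, n!\, \langle f, g\rangle_{L^2(G^n)}$, so $I_n$ embeds $S^n(L^2(G))$ (with a rescaled inner product) isometrically into $L^2(\MM,\PPP)$, and the images for distinct $n$ are mutually orthogonal. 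The substantive analytic input — which I would quote rather than reprove — is that these images \emph{span}: every $F \in L^2(\MM,\PPP)$ has a unique expansion $F = \sum_{n\ge 0} I_n(f_n)$. This completeness is the heart of the matter and is the one step I would not attempt to redo; it is standard (e.g.\ via the fact that exponential vectors / Wick-ordered polynomials in the $N_U$ are total in $L^2(\MM,\PPP)$, which follows from the Laplace functional of the Poisson process determining it).

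Next I would check that this decomposition is $G$-equivariant, i.e.\ that each summand $S^n(L^2(G))$ is a $G$-invariant subspace and that the Koopman action on it is the one induced from the diagonal action of $G$ on $G^n$ via left translation. This is where the argument is genuinely short: the Koopman operator $\pi(g)$ acts on $\MM$ by $\omega \mapsto g^{-1}\omega$, hence sends the (compensated) counting measure of $\Pi$ to its left-translate, and since left Haar measure is translation-invariant the compensator is preserved; therefore $\pi(g) I_n(f) = I_n(f\circ g^{-1})$ where $g$ acts diagonally on $G^n$. Thus $\pi(g)$ restricted to the $n$-th chaos is unitarily conjugate to the $n$-fold diagonal left-regular-type action on $L^2(G^n)$ restricted to symmetric tensors, which by the identification already noted in the excerpt is exactly $S^n(L^2(G))$ as a $G$-representation. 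The $n=0$ term is the constants $\RR = L^2(G)^{\otimes 0}$, carrying the trivial representation, matching the convention stated.

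Finally I would assemble the pieces: orthogonality and completeness give the Hilbert-space direct sum decomposition $L^2(\MM,\PPP) = \bigoplus_{n\ge 0} \overline{\ima I_n}$, equivariance identifies the $n$-th summand with $S^n(L^2(G))$ as a unitary $G$-representation, and this is the claimed isomorphism. I expect the main obstacle — or rather the main thing one must be careful about — to be purely the completeness of the chaos expansion, which for a general lcsc group (as opposed to $\RR^n$) one should cite in the form given by Last and Penrose; everything else is bookkeeping with the isometry formula and with translation-invariance of Haar measure. A minor subtlety worth a sentence is that $\PPP$ here has unit intensity and is defined relative to the fixed Haar measure $\lambda$, so the non-atomicity hypothesis in the definition of the Poisson process is used to guarantee the diagonal terms in $I_n$ do not contribute, which is what makes the $n$-th chaos literally a copy of the \emph{symmetric} (rather than merely the full) tensor power.
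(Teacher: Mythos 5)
Your proposal is correct and is essentially the intended argument: the paper gives no proof of this statement, simply attributing the chaos decomposition to Last--Penrose \cite{last2011poisson} and remarking that their result holds for Poisson processes over general measure spaces. You correctly separate the two ingredients: the analytic content (orthogonality plus completeness of the multiple Wiener--It\^{o} integrals $I_n$), which you rightly quote rather than reprove, and the representation-theoretic bookkeeping (that each chaos is $G$-invariant and the Koopman action on it is the diagonal translation action on $S^n(L^2(G))$), which is the genuinely new-but-short step since Last--Penrose carry no group action. Your observation that translation-invariance of Haar measure is what makes the compensator $\lambda$ equivariant, and hence makes $\pi(g)$ intertwine with $I_n$, is exactly the point; and the remark about non-atomicity guaranteeing the off-diagonal definition of $I_n$ is a reasonable sanity check, though for Haar measure on a nondiscrete lcsc group this is automatic.
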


\begin{remark}

It should be stressed that Last and Penrose work with Poisson point processes in full generality on more-or-less arbitrary measure spaces, not merely the special case of lcsc groups with Haar measure. In particular, one also gets a similar decomposition of the Koopman representation of the IID Poisson on $G$.

\end{remark}

\begin{proof}[Proof of Proposition \ref{spectralgap}]

Simply observe that $S^n(L^2(G))$ is a subrepresentation of $L^2(G)^{\otimes n}$ by definition, which is in turn a subrepresentation of $L^2(G)^{\oplus \NN}$. Thus
\[
    L^2_0(\MM, \PPP) \text{ is a subrepresentation of } L^2(G)^{\oplus \NN}.
\]
Now recall that a representation $\pi$ has almost invariant vectors if and only if $\pi^{\oplus \NN}$ does, finishing the proof.
\end{proof}

\begin{question}

There is a general method for associating pmp actions to unitary representations known as \emph{Gaussian actions}, see \cite{bekka2008kazhdan} and \cite{kerr2016ergodic} for further details. The Gaussian action associated to the regular representation of $G$ has the same Koopman representation as the Poisson on $G$. Is there an invariant that distinguishes these actions?

\end{question}

Note that in the discrete case, the Gaussian action associated to $\Gamma \acts \ell^2 \Gamma$ is the IID Bernoulli action $\Gamma \acts [0,1]^\Gamma$.

\subsection{Property (T) and the nonexistence of Cayley factor graphs}\label{tsection}

In this section we prove Theorem \ref{kazhdantheorem}. Let us start with an informal sketch of the argument:

Note that by the correspondences we've described, a directed factor graph of the form $\text{Cay}(\Gamma,S)$ of $\Pi$ is the same thing as a free p.m.p. action $\Gamma \acts (\MM, \mu_0)$ of the Palm equivalence relation (plus a choice of finite\footnote{In fact, the ambient group $G$ will have Property (T) if and only if the Palm equivalence relation does in an appropriate sense, in which case $\Gamma$ will also have Property (T) and thus be finitely generated automatically.} generating set $S$). This action induces a \emph{cocycle} $c : \Rel \to \Gamma$ of the rerooting equivalence relation in a standard way, namely $c(\omega, g^{-1}\omega)$ is the unique element of $\Gamma$ satisfying
\[
    c(\omega, g^{-1}\omega) \cdot \omega = g^{-1}\omega,
\]
where the left-hand side uses the $\Gamma$ action. Our aim is to \emph{lift} this cocycle up to the action groupoid $G \times (\mathbb{M}, \mu)$, apply Popa's cocycle superrigidity there, and ultimately find a contradiction. 

We will state the definitions required to formally understand the basic case of Popa's cocycle superrigidity that we use. For a better understanding of why it works see Alex Furman's survey \cite{furman2009survey} and ergodic theoretic retelling in \cite{furman2007popa}, and also the book of Kerr and Li \cite{kerr2016ergodic}.

\begin{defn}[Malleability]
Let $G \acts (X, \mu)$ be a pmp action. Recall that the \emph{weak topology} on $\Aut(X, \mu)$ is the weakest topology that makes all functions $T \mapsto \mu(TA)$ continuous, where $T \in \Aut(X, \mu)$ and $A \subseteq X$ is Borel.

The \emph{flip} element of $\Aut(X \times X, \mu \otimes \mu)$ is $\texttt{FLIP}(x, y) = (y, x)$. 

Note that $G$ acts on $\Aut(X \times X, \mu \otimes \mu)$ diagonally via $(g \cdot T)(x, y) := T(gx, gy)$, and $\texttt{FLIP}$ commutes with this action. 

The action $G \acts (X, \mu)$ is \emph{malleable} if there exists a continuous path $\gamma: [0, 1] \to \Aut(X \times X, \mu \otimes \mu)$ from $\id$ to $\texttt{FLIP}$ such that $\gamma_t$ commutes with the diagonal action for every $t \in [0,1]$.

\end{defn}

The following fact seems to have gone unobserved:

\begin{prop}

The IID Poisson point process is malleable.

\end{prop}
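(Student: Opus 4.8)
The plan is to exhibit $(X\times X,\mu\otimes\mu)$, where $(X,\mu):=(\MM,[0,1]^\PPP)$ is the IID Poisson on $G$, as a Poisson point process over an enlarged base space in which $\texttt{FLIP}$ acts only on an ``extra'' coordinate, and then to deform that extra-coordinate automorphism to the identity through measure preserving maps. As recalled earlier (the remark that the IID Poisson on $G$ is the Poisson point process on $G\times[0,1]$, see Theorem 5.6 of \cite{MR3791470}), $(X,\mu)$ is, as a $G$-space, the Poisson point process on $G\times[0,1]$ of intensity $\lambda\otimes\mathrm{Leb}$, with $G$ acting by left translation in the first coordinate. Since the independent superposition of two Poisson processes is Poisson with the summed intensity, the map
\[
    (\omega_1,\omega_2)\longmapsto\{(x,u,1):(x,u)\in\omega_1\}\cup\{(x,u,2):(x,u)\in\omega_2\}
\]
is an isomorphism of $G$-spaces from $(X\times X,\mu\otimes\mu)$ onto the Poisson point process on $G\times Y$ of intensity $\lambda\otimes\nu$, where $Y:=[0,1]\times\{1,2\}$ and $\nu$ is Lebesgue measure times counting measure. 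Under this identification the diagonal $G$-action is translation in the $G$-coordinate, and one checks directly that $\texttt{FLIP}$ corresponds to the automorphism of the Poisson space induced by $\sigma\in\Aut(Y,\nu)$, $\sigma(u,i):=(u,3-i)$, which only interchanges the two sheets.

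The next step is to lift automorphisms of $(Y,\nu)$. Each $\phi\in\Aut(Y,\nu)$ gives $\id_G\times\phi\in\Aut(G\times Y,\lambda\otimes\nu)$, whose Poisson suspension is an automorphism $\widehat{\phi}$ of the Poisson space acting by $\omega\mapsto\{(x,\phi(y)):(x,y)\in\omega\}$; since $\id_G\times\phi$ fixes the $G$-coordinate, $\widehat{\phi}$ commutes with the diagonal $G$-action, and $\phi\mapsto\widehat{\phi}$ is a group homomorphism. It is moreover continuous for the weak topologies (equivalently, for strong-operator continuity of the associated Koopman representations): by the chaos decomposition of $L^2$ of a Poisson process (Last--Penrose \cite{last2011poisson}, in the form valid over an arbitrary $\sigma$-finite base), the Koopman operator of $\widehat{\phi}$ is $\bigoplus_{n\geq 0}S^n(U_\phi)$, the direct sum of the symmetric tensor powers of the Koopman operator $U_\phi$ of $\id_G\times\phi$ on $L^2(G\times Y)$; and $\phi\mapsto U_\phi$ is strong-operator continuous (essentially by definition of the weak topology), while $U\mapsto\bigoplus_n S^n(U)$ is strong-operator continuous on the unitary group.

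Finally, to contract $\sigma$ to the identity I would choose a convenient model of $(Y,\nu)$: identifying the first copy of $[0,1)$ with $[0,1)$ and the second with $[1,2)$ presents $(Y,\nu)$, up to a null set, as the circle $\RR/2\ZZ$ with Lebesgue measure, and turns $\sigma$ into the rotation by $1$. Writing $R_t$ for rotation by $t$, the path $t\mapsto R_t$ is weakly continuous in $\Aut(Y,\nu)$, runs from $\id$ to $\sigma$, and consists of $\nu$-preserving maps. Pushing this path through $\phi\mapsto\widehat{\phi}$ produces a weakly continuous path $\gamma_t:=\widehat{R_t}$ in $\Aut(X\times X,\mu\otimes\mu)$ from $\id$ to $\texttt{FLIP}$, with each $\gamma_t$ commuting with the diagonal action, which is exactly a malleability path.

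The only step that needs genuine care is the superposition identification in the first paragraph: one must follow the bookkeeping of the $[0,1]$-marks closely enough to be confident that $\texttt{FLIP}$ really does become a transformation of the base $G\times Y$ fixing the $G$-factor, which is precisely where the marked description of the IID Poisson and the superposition property of Poisson processes do the work. The continuity in the second paragraph is the most calculation-heavy point but is entirely standard Poisson-suspension material once the chaos decomposition is granted, and the contraction in the third paragraph is elementary; so I expect the superposition set-up to be the main obstacle, and everything after it to be routine.
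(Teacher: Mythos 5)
Your proof is correct and rests on the same key observation as the paper's: identifying $[0,1]^\PPP\otimes[0,1]^\PPP$ with the Poisson point process on $G\times Y$ where $Y=[0,1]\times\{1,2\}$, so that $\texttt{FLIP}$ is the Poisson suspension of the sheet-swap $\sigma(u,i)=(u,3-i)$. The only substantive difference is the choice of path in $\Aut(Y,\nu)$ from $\id$ to $\sigma$: the paper deforms by a ``partial swap'' ($\phi_t(u,i)=(u,-i)$ for $u\leq t$, identity otherwise), whereas you identify $(Y,\nu)$ with $\RR/2\ZZ$ and use the rotation flow $R_t$. Both paths are equally simple and weakly continuous, though yours has the mild aesthetic advantage of being a one-parameter group. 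You also supply a continuity argument via the chaos decomposition (second quantization $\phi\mapsto\widehat{\phi}$ sends the base Koopman operator to $\bigoplus_n S^n(U_\phi)$), which the paper leaves implicit; since that argument shows $\phi\mapsto\widehat{\phi}$ is a weakly continuous homomorphism $\Aut(Y,\nu)\to\Aut(X\times X,\mu\otimes\mu)$, it works for either choice of path and is a genuine improvement in rigour.
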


\begin{proof}

Observe that a sample from $[0,1]^\PPP \otimes [0,1]^\PPP$ (that is, sampling from two independent unit intensity IID Poissons and keeping track of which is which) is the same as sampling from an IID Poisson $\Pi$ of double the intensity with labels from $[0,1] \times \{\pm 1\}$.

Define for $0 \leq t \leq 1$ the map $t : [0,1] \times \{\pm\} \to [0,1] \times \{\pm\}$ by
\[
    \phi_t(x,i) = \begin{cases} (x, -i) & x \leq t \\ (x, i) & \text{ else.} \end{cases}
\]
Now define
\[
    \gamma_t(\Pi) = \{ (g, \phi_t(x,i)) \in G \times [0,1] \times \{\pm\} \mid (g, x, i) \in \Pi \}.
\]
Then $\gamma_t$ continuously deforms $\id$ to $\texttt{FLIP}$.
\end{proof}

Recall that a groupoid consists of a set of composable arrows $\mathcal{G}$ and a unit space $\mathcal{G}_0$. For our main case of interest this is $\Marrow$ and $\MMo$ respectively.

\begin{defn}

Let $\Gamma$ be a discrete group and $\mathcal{G}$ a groupoid. A \emph{$\Gamma$-valued cocycle} of the groupoid is a measurable function $c : \mathcal{G} \to \Gamma$ satisfying \emph{the cocycle identity}
\[
    c(g) \cdot c(h) = c(gh) \text{ for all } \omega \in \MMo \text{ and } g, gh \in \mathcal{G}.
\]

Two cocycles $c, c' : \mathcal{G} \to \Gamma$ are \emph{cohomologous} if there exists a measurable function $f : \mathcal{G}_0 \to \Gamma$ such that for all $g \in \mathcal{G}$
\[
    c'(g) = f(t(g)) c(g) f(s(g)).
\]

\end{defn}

\begin{remark}

Recall that in the categorical framework, a groupoid is a category where every arrow is invertible, and a group is the same thing but with only one object. In this language, a cocycle is a functor from a groupoid to a group, and two such cocycles are cohomologous exactly when there's a natural transformation between the two functors.

\end{remark}

\begin{example}

If $G \acts (X, \mu)$ is a pmp action, then the associated \emph{action groupoid} has unit space $(X, \mu)$ and arrow space $G \times (X, \mu)$. The source of such an arrow is $x$, and its target is $g^{-1}x$. The composition rule for arrows is
\[
    (g, x) \cdot (h, y) := (gh, x) \text{ if } y = g^{-1}x.
\]

Note that if $\rho : G \to \Gamma$ is a homomorphism, then it induces a cocycle $c_\rho(\omega, g) = \rho(g)$. We will abuse notation and denote this cocycle simply by $\rho$. 

In an identical way we see that $\rho$ can be viewed as a cocycle of $\Marrow$.

\end{example}

We will use the following very basic form of Popa's cocycle superrigidity theorem:

\begin{thm}[\cite{popa2008cocycle}]

Let $G \acts (X, \mu)$ be a malleable and weakly mixing pmp action of an lcsc group $G$ with Property (T). Then any cocycle $c : G \times X \to \Gamma$ of the action groupoid is cohomologous to a homomorphism $\rho : G \to \Gamma$.

\end{thm}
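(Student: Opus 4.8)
The plan is to run Popa's \emph{deformation/rigidity} argument in its ergodic-theoretic form (cf.\ \cite{furman2009survey, furman2007popa, kerr2016ergodic}); fix once and for all a compact Kazhdan set $Q \subseteq G$ with Kazhdan constant $\kappa > 0$. First I would pass to the \emph{square}: consider the diagonal action $G \acts (X \times X, \mu\otimes\mu)$, which is again weakly mixing, together with the two cocycles $c^{(0)}(g,(x,y)) := c(g,x)$ and $c^{(1)}(g,(x,y)) := c(g,y)$ of its action groupoid. Since the malleability path $\gamma_t \in \Aut(X\times X, \mu\otimes\mu)$ commutes with the diagonal action, the formula $c_t(g,w) := c^{(0)}(g,\gamma_t(w))$ defines, for every $t \in [0,1]$, a cocycle of $G \acts X\times X$, with $c_0 = c^{(0)}$ and $c_1 = c^{(1)}$ (the second because $\gamma_1 = \texttt{FLIP}$ interchanges the two coordinates). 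Moreover, since $t \mapsto \gamma_t$ is continuous for the weak topology (so $(\mu\otimes\mu)(\gamma_t^{-1}A\,\triangle\,\gamma_{t_0}^{-1}A)\to 0$ for every measurable $A$), precomposition shows that $c_t(g,\cdot)\to c_{t_0}(g,\cdot)$ in measure as $t\to t_0$, uniformly for $g$ in the compact set $Q$.

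The heart of the matter — and the step I expect to be the main obstacle — is the rigidity input from Property (T), namely the lemma: \emph{there is $\delta>0$, depending only on $(Q,\kappa)$, such that any two cocycles $c',c'':G\times(X\times X)\to\Gamma$ with $(\mu\otimes\mu)\{w:c'(g,w)\neq c''(g,w)\}<\delta$ for all $g\in Q$ are cohomologous.} Granting it, the first paragraph makes $\{t\in[0,1]:c_t\sim c_0\}$ open and closed, hence equal to $[0,1]$, so $c^{(0)}\sim c^{(1)}$. To prove the lemma I would build, on $\mathcal{H}:=L^2(X\times X,\mu\otimes\mu)\otimes\ell^2(\Gamma)$, the unitary representation $\sigma$ of $G$ defined by $(\sigma(g)\xi)(w):=\lambda_{c'(g,w)}\,\rho_{c''(g,w)}\,\xi(g^{-1}w)$, where $\lambda$ and $\rho$ are the (commuting) left and right regular representations of $\Gamma$; the cocycle identities for $c'$ and $c''$ make $\sigma$ a genuine representation. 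The point of using \emph{both} regular representations is that the constant vector $\xi_0:=\mathbf{1}_{X\times X}\otimes\delta_e$ is then an honest unit vector of $\mathcal{H}$ — this is what sidesteps the red herring that the ``obvious'' invariant object would live on the $(\mu\otimes\mu)$-null diagonal of $X\times X$ — and one computes $\|\sigma(g)\xi_0-\xi_0\|^2=2(\mu\otimes\mu)\{w:c'(g,w)\neq c''(g,w)\}$, so $\xi_0$ is $(Q,\kappa)$-almost invariant once $\delta$ is small. Property (T) then yields a nonzero $\sigma$-invariant vector $\xi$; by ergodicity of $G\acts X\times X$ the function $w\mapsto\|\xi(w)\|_{\ell^2\Gamma}$ is a.e.\ a positive constant, the probability measures $|\xi(w)(\cdot)|^2$ on $\Gamma$ transform equivariantly under $\gamma\mapsto c'(g,w)\,\gamma\,c''(g,w)^{-1}$, and a standard maximal-atom selection extracts a measurable transfer function $X\times X\to\Gamma$ realizing $c'\sim c''$. (The remaining delicate point is the measurable symmetry-breaking in the maximal-atom step; the ``clopen subset of $[0,1]$'' bookkeeping around it is routine.)

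Finally, from $c^{(0)}\sim c^{(1)}$ on $X\times X$ I would invoke the classical \emph{untwisting} lemma for weakly mixing cocycles. If $\Delta:X\times X\to\Gamma$ realizes this cohomology, then $\Delta$ is cocycle-equivariant, $\Delta(gx,gy)=c(g,x)\,\Delta(x,y)\,c(g,y)^{-1}$ a.e.; comparing this relation for triples $(x,y,z)$ and using that all Cartesian powers of a weakly mixing action are ergodic forces $\Delta$ to coincide, up to a coboundary, with a function of a single variable. Feeding this back produces $f:X\to\Gamma$ and $\rho:G\to\Gamma$ with $c(g,x)=f(gx)\,\rho(g)\,f(x)^{-1}$, and the cocycle identity for $c$ then forces $\rho$ to be a group homomorphism; thus $c$ is cohomologous to $\rho$, as required. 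The entire weight of the argument sits in the Property (T) step of the middle paragraph; the passage to the square and the weak-mixing untwisting are, by comparison, routine.
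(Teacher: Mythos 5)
The paper does not prove this theorem; it is cited directly from Popa~\cite{popa2008cocycle}, so there is no in-paper argument to compare against. Your sketch faithfully reproduces the standard deformation/rigidity proof: pass to the square $X\times X$, use the malleability path (commuting with the diagonal action) to interpolate between the two restriction cocycles $c^{(0)},c^{(1)}$, prove a local-rigidity lemma from Property~(T) via the $\lambda$--$\rho$ representation $\sigma$ on $L^2(X\times X,\mu\otimes\mu)\otimes\ell^2\Gamma$ --- your verification that $\sigma$ is a representation and the computation $\|\sigma(g)\xi_0-\xi_0\|^2=2(\mu\otimes\mu)\{w : c'(g,w)\neq c''(g,w)\}$ with $\xi_0=\mathbf{1}\otimes\delta_e$ are exactly right, and this is indeed the device that sidesteps the null-diagonal issue --- then run the open/closed bookkeeping on $[0,1]$ and conclude by the weak-mixing untwisting lemma. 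The structure is sound and matches the ergodic-theoretic retellings in \cite{furman2007popa} and \cite{kerr2016ergodic}.

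Two of the steps you label ``routine'' are precisely where the difficulty lives, and a careful writeup must pause there. First, the claim that $c_t(g,\cdot)\to c_{t_0}(g,\cdot)$ in measure \emph{uniformly for $g$ in the compact Kazhdan set $Q$} does not follow from weak continuity of $t\mapsto\gamma_t$ alone once $Q$ is infinite (for finite $Q$, as in the discrete case, it is automatic). One must either invoke joint continuity of $(g,t)\mapsto c_t(g,\cdot)$ in measure --- a nontrivial regularity property of measurable cocycles of lcsc action groupoids --- or replace the supremum over $Q$ by the average $\int_Q(\mu\otimes\mu)\{w:c_t(g,w)\neq c_{t_0}(g,w)\}\,d\lambda(g)$, whose continuity in $t$ is just dominated convergence, and phrase the Kazhdan estimate in that integrated form. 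Second, the ``maximal-atom selection'' may return, for each $w$, a set $S_w\subset\Gamma$ of constant cardinality $k>1$; the equivariance $S_w=c'(g,w)\,S_{g^{-1}w}\,c''(g,w)^{-1}$ does not by itself produce a single-valued transfer function, and some further reduction (iterating with the normalized indicator of $S_w$, or deferring atom selection until after the weak-mixing step) is needed to break the tie measurably. Both points are handled in the references you cite, but they are exactly where ``routine'' hides the work.
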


We will apply this theorem using the following induction process:

\begin{prop}\label{induction}

Let $\mu$ be an ergodic point process on a nondiscrete group $G$. Then there is a factor map (as measure preserving groupoids) from the action groupoid $G \times (\MM, \mu)$ to the Palm groupoid $(\MMo, \mu_0)$.

In particular, any cocycle of the Palm groupoid can be \emph{lifted} to a cocycle of the action groupoid.

\end{prop}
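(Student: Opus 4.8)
The plan is to exhibit the Palm groupoid as a \emph{retract} of the action groupoid via the Voronoi tessellation, and then pull cocycles back along this retraction. First note that $\MMo$ is a complete Borel cross-section for $G \acts (\MM,\mu)$: the case $\intensity(\mu)=0$ is vacuous, so assume $\intensity(\mu)>0$; then $0 \notin \omega$ for $\mu$-a.e.\ $\omega$ (finite intensity), the event ``$\omega$ is empty'' is shift-invariant hence $\mu$-null by ergodicity, and $\{g \in G : g^{-1}\omega \in \MMo\} = \omega$ is a nonempty discrete subset of $G$ for $\mu$-a.e.\ $\omega$. One checks directly (this is the cross-section picture of the Palm measure; cf.\ the appendix) that the reduction $\{(g,\omega) \in G \times \MMo : g^{-1}\omega \in \MMo\}$ of the action groupoid to this cross-section is, as a Borel groupoid, exactly the Palm groupoid $(\Marrow,\muarrow)$ under $(g,\omega) \leftrightarrow (\omega,g)$, with $\mu_0$ arising as its transverse measure (suitably normalised).

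Now build the retraction. Fix the tie-broken Voronoi partition $\{V_\omega(x)\}_{x \in \omega}$ of Section~\ref{amenability}, so that for every nonempty $\omega$ each $g \in G$ lies in exactly one cell; write $n_g(\omega) \in \omega$ for the point with $g \in V_\omega(n_g(\omega))$ and put $n(\omega) := n_0(\omega)$. The equivariance $V_{h\omega}(hx) = hV_\omega(x)$ gives $n_{hg}(h\omega) = h\, n_g(\omega)$; in particular $n(g^{-1}\omega) = g^{-1} n_g(\omega)$ and $n_h(g^{-1}\omega) = g^{-1} n_{gh}(\omega)$. Define $\phi_0 : \MM \to \MMo$ by $\phi_0(\omega) = n(\omega)^{-1}\omega$ (recenter at the cell containing the origin) and $\phi : G \times \MM \to \Marrow$ by
\[
    \phi(g,\omega) = \bigl(\, n(\omega)^{-1}\omega,\ n(\omega)^{-1} n_g(\omega) \,\bigr).
\]
Using these identities one verifies: $\phi(g,\omega)$ is an arrow from $\phi_0(\omega)$ to $\phi_0(g^{-1}\omega)$; $\phi(gh,\omega) = \phi(g,\omega)\cdot\phi(h,g^{-1}\omega)$ (pure cancellation in $G$ after substituting $n_{gh}$); $\phi$ preserves identities; and $\phi$ restricts to the identity on the copy of the Palm groupoid inside $G \times (\MM,\mu)$, since for $\eta \in \MMo$ one has $n(\eta)=0$ and $n_h(\eta)=h$ for $h \in \eta$. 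Hence $\phi$ is a Borel groupoid homomorphism which is a surjective retraction onto the Palm groupoid.

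Finally, matching the measures: applying the CLMM (Theorem~\ref{CLMM}) with $f(x,\eta) = \1\{x^{-1} \in V_\eta(0)\}\,\1_B(\eta)$ and using $\lambda(A^{-1}) = \lambda(A)$ (unimodularity), one obtains for Borel $B \subseteq \MMo$
\[
    (\phi_0)_*\mu(B) = \intensity(\mu) \int_B \lambda\bigl(V_\eta(0)\bigr)\, d\mu_0(\eta),
\]
so $(\phi_0)_*\mu$ is equivalent to $\mu_0$, with strictly positive, a.s.-finite density $\eta \mapsto \intensity(\mu)\,\lambda(V_\eta(0))$ (integrating to $1$ as the cells tile $G$). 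Thus $\phi$ respects the measure classes on the arrow spaces, which already yields the ``in particular'': for a $\Gamma$-valued cocycle $c$ of the Palm groupoid, $\widetilde c := c \circ \phi$ is a Borel $\Gamma$-valued cocycle of the action groupoid (the cocycle identity for $\widetilde c$ is functoriality of $\phi$ followed by that of $c$; it holds $\mu$-a.e.\ and $\widetilde c$ is defined $\mu$-a.e.\ because $\phi$ respects the measure classes), and $\widetilde c$ restricts to $c$ since $\phi$ is a retraction. A strictly measure-preserving factor map is then obtained by absorbing the size-bias with one auxiliary $\texttt{Unif}[0,1]$ coordinate via the Voronoi inversion $\mathscr V$ of Remark~\ref{VIF}; this is not needed for the cocycle-lifting statement.

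I expect the measure bookkeeping to be the main obstacle: the CLMM computation shows that \emph{no} deterministic recentering $\omega \mapsto x(\omega)^{-1}\omega$ can push $\mu$ exactly to $\mu_0$ --- it always size-biases by the volume of the root's Voronoi cell --- so one must either settle for a retraction respecting only the measure classes (which is all the cocycle-lifting application needs, since cocycles live on conull sets) or invoke the Voronoi inversion to remove the bias. The remaining verification, functoriality of $\phi$, is purely mechanical once the equivariance $n_{hg}(h\omega) = h\, n_g(\omega)$ of the Voronoi labelling is recorded.
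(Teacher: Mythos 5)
Your structural skeleton is right --- recenter at a distinguished point of the configuration, extend to an arrow map by conjugation, check functoriality by equivariance of the labelling, and match measures by CLMM. And your size-bias computation for the Voronoi recentering is correct: $(\phi_0)_*\mu$ has density $\eta \mapsto \intensity(\mu)\,\lambda(V_\eta(0))$ with respect to $\mu_0$. But your final diagnosis is wrong, and that is where the gap lies. You assert that ``no deterministic recentering $\omega \mapsto x(\omega)^{-1}\omega$ can push $\mu$ exactly to $\mu_0$ --- it always size-biases by the volume of the root's Voronoi cell.'' This conflates the Voronoi partition with an arbitrary equivariant partition. The size-bias you computed is by the volume of the cell of the root \emph{in whatever partition you choose to recenter with}; it only equals the Voronoi-cell volume because you chose the Voronoi partition. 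The paper's route is to replace the Voronoi partition with a \emph{balanced allocation}: an equivariant measurable partition $\{A_\omega(x)\}_{x\in\omega}$ of $G$ in which every cell has volume exactly $\intensity(\mu)^{-1}$. Rerunning your CLMM computation with such a partition gives density identically $1$, so the recentering map $E(\omega) = \mathcal{E}_\omega^{-1}\omega$ (the ``extra head scheme'') pushes $\mu$ \emph{exactly} to $\mu_0$, with no auxiliary randomness. The arrow map $\overrightarrow{E}(\omega,g) = (E(\omega),\,\mathcal{E}_\omega^{-1} g\,\mathcal{E}_{g^{-1}\omega})$ is then a genuine measure-preserving groupoid factor, which is what the proposition claims.

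The existence of a balanced allocation is the real content that your proof is missing, and it is not free: the paper proves it by a ``measurable Zorn's lemma'' argument, showing that among proper allocations (cells of volume $\leq \intensity(\mu)^{-1}$) a maximal one exists, and that any non-balanced proper allocation can be strictly enlarged (wanting points claim land from sharers). Your proposed fix of absorbing the bias with an auxiliary $\texttt{Unif}[0,1]$ coordinate via Voronoi inversion does not prove the stated proposition either: it changes the domain groupoid from $G \times (\MM,\mu)$ to a product with a trivial $[0,1]$-factor, which is not the action groupoid of $\mu$. That said, your observation that measure-\emph{class} preservation already suffices for the cocycle-lifting corollary is correct and worth keeping in mind, since cocycles only care about conull sets; but it does not close the gap in the measure-preserving statement, and the balanced allocation is what actually closes it.
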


The induction procedure will require a bit more probability theory, which is a slight generalisation of work of Holroyd and Peres \cite{holroyd2005}.

\begin{defn}
Let $\Pi$ be an ergodic and invariant point process. A \emph{partial allocation} for $\Pi$ is a measurable and equivariantly defined function $\mathcal{A} : \mathbb{M} \times G \to G \cup \{\bullet\}$ with the property that if $\mathcal{A}(\omega, g) \in G$ then $\mathcal{A}(\omega, g) \in \omega$.

We think of a partial allocation as an equivariant and measurable assignment to each $x \in \omega$ a measurable subset $A_\omega(x) = \{g \in G \mid \mathcal{A}(\omega, g) = x \}$ of $G$. This is a piece of ``land'' apportioned to $x$. If $\mathcal{A}(\omega, g) = x$, then we think of the point $g \in G$ as being \emph{assigned} to $x$ when the current configuration is $\omega$. If $\mathcal{A}(\omega, g) = \bullet$ then we think of $g$ as being an infinitesimal piece of unclaimed land.

We are interested in partial allocations which are defined as factors of an invariant point process $\Pi$. We will consider two allocations $\mathcal{A}$ and $\mathcal{B}$ to be \emph{equivalent} if
\[
    \lambda(\{g \in G \mid \mathcal{A}(\omega, g) \neq \mathcal{B}(\omega, g) \}) = 0 \text{ for an almost sure set of } \omega \in \mathcal{M}.
\]

\end{defn}

If $\Pi_0$ is the Palm version of $\Pi$, then it is natural to consider $\EE\left[ \lambda(A_0(\Pi_0)\right]$, the expected volume of the land allocated to the identity. Intuitively this is at most $\intensity(\Pi)^{-1}$.

\begin{defn}
An allocation is \emph{proper} if $\lambda(A_\Pi(x)) \leq \intensity(\Pi)^{-1}$ for all $x \in \Pi$. It is \emph{balanced} if $\lambda(A_\Pi(x)) = \intensity(\Pi)^{-1}$ for all $x \in \Pi$.

\end{defn}

\begin{remark}
A balanced allocation is therefore an equivariantly defined factor partition (up to a Haar null set) of the group.
\end{remark}

Let us verify our intuition by showing that the cells of a factor partition of a process $\Pi$ have expected volume at most $\intensity(\Pi)^{-1}$ using the CLMM:
\begin{align*}
    \EE_0 [\lambda(\{ x \in A_{\Pi_0}(0)] &= \EE_0 [\lambda(\{ x^{-1} \in A_{\Pi_0}(0) \})] && \text{By unimodularity} \\
    &= \EE_0 \left[ \int_G \1[x^{-1} \in A_{\Pi_0}(0)] d\lambda(g) \right] \\
    &= \frac{1}{\intensity \Pi} \EE \left[ \sum_{x \in \omega} \1[x^{-1} \in A_{g^{-1}\Pi}(0)]\right] && \text{By the CLMM}\\
    &= \frac{1}{\intensity \Pi} \EE \left[ \sum_{x \in \omega} \1[0 \in A_\Pi(g)]\right] && \text{By equivariance}\\
    &= \frac{1}{\intensity \Pi},
\end{align*}
as we note that every term in the sum is zero except for one.

This shows that the expected volume of the cell of the identity in the Palm process is $\intensity(\Pi)^{-1}$, and as all the cells have the same volume, it must be exactly this value.

Aside from their intrinsic interest -- wouldn't it be swell to share everything equally? -- balanced allocations have other applications. 

\begin{defn}
    Let $\Pi$ be an invariant point process. An \emph{extra head scheme} for $\Pi$ is a measurable function $\mathcal{E} : \MM \to G$ such that $\mathcal{E}_\Pi^{-1} \Pi$ is a Palm version of $\Pi$. Note that $\mathcal{E}_\Pi \in \Pi$. 
    
    Equivalently, let $E : \MM \to \MMo$ be the map $E(\omega) = \mathcal{E}_\omega^{-1} \omega$. Then if $\mu$ is the distribution of $\Pi$ and $\mu_0$ the Palm measure of $\Pi$, we ask that $E_* \mu = \mu_0$.
\end{defn}

Our interest in extra head schemes is that they are a way of factoring the point process onto its own Palm measure \emph{whilst respecting orbit structure}, since $E(\omega) \in G\omega$. 

Note that if we simply define $\mathcal{E}_\Pi$ to be the point $g \in \Pi$ whose Voronoi cell contains the origin, then it will \emph{not} be an extra head scheme in general. The essential issue here is that the Voronoi cells have different volumes, and thus some form of size-biasing is required. This is illustrated in the following lemma, which is proved in the same fashion as the previous computation.

\begin{lem}
If $\mathcal{A}$ is a balanced allocation, then the function $X = X(\Pi)$ given by
\[
    X = \text{ the unique } g \in \Pi \text{ such that } 0 \in A_g
\]
is an extra head scheme.
\end{lem}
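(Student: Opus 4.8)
The plan is to compute the Palm measure of the point process $\Pi$ using the push-forward of the Palm measure $\mu_0$ itself under the candidate extra head map, and check that it returns $\mu_0$. Write $E : \MM \to \MMo$ for $E(\omega) = X(\omega)^{-1}\omega$ where $X(\omega)$ is the unique point of $\omega$ whose balanced-allocation cell contains the origin, and let $\mu$ be the law of $\Pi$. What we must show is that $E_* \mu = \mu_0$; by the uniqueness statement of Remark \ref{VIF} this then forces $X(\Pi)^{-1}\Pi$ to be a Palm version of $\Pi$.

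First I would unwind the meaning of $E_*\mu = \mu_0$ as a statement about integrals: for every bounded Borel $f : \MMo \to \RR_{\geq 0}$ we want
\[
    \EE_\mu\left[ f\big(X(\omega)^{-1}\omega\big) \right] = \intensity(\mu)^{-1} \cdot \intensity(\mu) \cdot \EE_{\mu_0}[f] = \EE_{\mu_0}[f],
\]
so the real content is $\EE_\mu[ f(X(\omega)^{-1}\omega) ] = \EE_{\mu_0}[f]$. To get at the left-hand side I would note that $X(\omega) = g$ exactly when $0 \in A_\omega(g)$, i.e.\ exactly when $g = \mathcal{A}(\omega, 0)$, and then rewrite the event over which point of $\omega$ is selected as an integral over the allocated land. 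Concretely, since the cells $\{A_\omega(g)\}_{g \in \omega}$ partition $G$ up to a null set and each has volume $\intensity(\mu)^{-1}$, for a fixed $g \in \omega$ the quantity $\lambda(A_\omega(g))$ equals $\intensity(\mu)^{-1}$, which lets us insert $\intensity(\mu)\int_{A_\omega(g)} \,d\lambda(x)$ as a factor of $1$. This is the same device used in the displayed computation just before the lemma.

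The heart of the argument is then a single application of the CLMM (Theorem \ref{CLMM}). Starting from $\EE_{\mu_0}[f] = \intensity(\mu)\,\EE_{\mu_0}\big[\int_G \1[x \in A_\omega(0)] f(\omega)\, d\lambda(x)\big]$ — valid because the inner integral is $\lambda(A_\omega(0)) = \intensity(\mu)^{-1}$ — I would apply the mass transport / CLMM identity with the (non-invariant) integrand $h(x,\omega) = \1[x \in A_\omega(0)] f(\omega)$ read in reverse, i.e.\ recognising $\intensity(\mu) \EE_{\mu_0}[\int_G h(x,\omega)\,d\lambda(x)] = \EE_\mu[\sum_{x \in \omega} h(x, x^{-1}\omega)]$. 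Using the equivariance of the allocation, $x \in A_{x^{-1}\omega}(0)$ is the same as $0 \in A_\omega(x)$, which by definition says $X(\omega) = x$; so the sum $\sum_{x \in \omega}$ collapses to the single term $x = X(\omega)$, and one is left with $\EE_\mu[f(X(\omega)^{-1}\omega)]$ — exactly as $f$ is evaluated at $x^{-1}\omega = X(\omega)^{-1}\omega$. This closes the loop.

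The main obstacle, I expect, is bookkeeping rather than anything conceptual: one has to be careful that the balanced allocation really is an honest partition up to a $\lambda$-null set (so that $\sum_{x\in\omega}\1[x \in A_\omega(0)]$ and the like behave as claimed almost surely, and the "unique $g$" in the statement is genuinely unique almost everywhere), and one must track the $\intensity(\mu)$ factors and the left/right inversions introduced by the group's left bias. Ergodicity is used only to know $\mu_0$ is a well-defined Palm measure and that the allocation's cells all have the common volume $\intensity(\mu)^{-1}$ (established in the computation preceding the lemma). Once those points are pinned down, the CLMM does all the work in one line.
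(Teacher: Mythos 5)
Your overall strategy — apply the CLMM to a single indicator-weighted integrand and evaluate it two ways — is exactly the route the paper intends (it declares the lemma is "proved in the same fashion as the previous computation"). But there is a concrete error in the equivariance step, and it is precisely the kind of left/right slip you flagged as a worry but then committed.

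You chose $h(x,\omega)=\1[x\in A_\omega(0)]\,f(\omega)$ and claimed that, by equivariance, $x\in A_{x^{-1}\omega}(0)$ is the same as $0\in A_\omega(x)$. It is not. The equivariance identity is $A_{\gamma\omega}(\gamma p)=\gamma A_\omega(p)$; taking $\gamma=x^{-1}$, $p=x$ gives $A_{x^{-1}\omega}(0)=x^{-1}A_\omega(x)$. Hence $x\in A_{x^{-1}\omega}(0)$ is equivalent to $x^2\in A_\omega(x)$, not to $0\in A_\omega(x)$, and the sum in $\EE_\mu\bigl[\sum_{x\in\omega}h(x,x^{-1}\omega)\bigr]$ does \emph{not} collapse to the single term $x=X(\omega)$. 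The correct correspondence is
\[
    0\in A_\omega(x)\ \Longleftrightarrow\ x^{-1}\in A_{x^{-1}\omega}(0),
\]
so the integrand you actually want is $h(x,\omega)=\1[x^{-1}\in A_\omega(0)]\,f(\omega)$. With this choice the CLMM gives $\EE_\mu[f(X(\omega)^{-1}\omega)]=\intensity(\mu)\,\EE_{\mu_0}\bigl[f(\omega)\int_G \1[x^{-1}\in A_\omega(0)]\,d\lambda(x)\bigr]$, and now the inner integral is $\lambda(A_\omega(0)^{-1})$, which equals $\lambda(A_\omega(0))=\intensity(\mu)^{-1}$ \emph{only by unimodularity of $G$}. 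Your version hides this step entirely, whereas the paper's preceding displayed computation opens with "By unimodularity" exactly here. The fix is a one-character change, but it matters structurally: the lemma, like everything in Section \ref{unimodularity}, genuinely depends on unimodularity, and your proposal as written makes it look dispensable.

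Two smaller points: Remark \ref{VIF} is not needed — showing $E_*\mu=\mu_0$ is, by definition, showing $E$ is an extra head scheme, with nothing further to conclude. And ergodicity plays no role here at all; the balanced-allocation hypothesis already supplies $\lambda(A_\omega(0))=\intensity(\mu)^{-1}$ almost surely.
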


\begin{thm}
Let $\Pi$ be an ergodic and invariant point process on a nondiscrete group $G$ of finite intensity. Then a balanced allocation for $\Pi$ exists, and hence also an extra head scheme.
\end{thm}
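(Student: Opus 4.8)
The plan is to construct the allocation by the \emph{stable marriage} (Gale--Shapley) rule in the style of Holroyd and Peres, and to check that the argument used for Euclidean space goes through on any unimodular lcsc group: the only features of the space that are used are that the left-invariant metric $d$ is proper (so balls have finite volume) and that $\lambda$ is non-atomic, together with ergodicity and finite positive intensity of $\Pi$. Write $\alpha = \intensity(\Pi)^{-1}$ for the common \emph{appetite}.

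First I would set up the stable allocation $\mathcal{A}$. Have each site $g \in G$ rank the points of $\omega$ in order of increasing distance --- ties broken lexicographically by the fixed Borel isomorphism $T : G \to \RR$ already used for the tie-broken Voronoi cells --- and ``propose'' to them in that order; let each point $x \in \omega$ provisionally hold its nearest (again $T$-broken) total mass $\alpha$ of proposing sites and reject the rest. Take $\mathcal{A}$ to be the fixed point of this procedure, equivalently the increasing union of the partial allocations obtained at successive stages. As in \cite{holroyd2005} one checks that $\mathcal{A}(\omega,\cdot)$ is Borel for a.e.\ $\omega$, that $\mathcal{A}$ is a genuine equivariant factor of $\Pi$, and that it is \emph{proper}, since no point ever retains more than mass $\alpha$. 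The essential property is \emph{stability}: there is no blocking pair $(g,x)$ with $g$ unallocated (or allocated to a point that $g$ ranks strictly below $x$) and $x$ not sated (or holding a site that $x$ ranks strictly below $g$); in particular, almost surely no unallocated site coexists with an unsated point $x$, i.e.\ one with $\lambda(A_\omega(x)) < \alpha$.

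Next I would run the density bookkeeping. By ergodicity the density of the unallocated region and the intensity of the set of unsated points are both deterministic constants, and by the previous paragraph at least one is $0$. If the unallocated density were positive, the unsated intensity would be $0$, so a.e.\ point of $\Pi$ would be sated; with properness this forces $\lambda(A_{\Pi_0}(0)) = \alpha$ almost surely for the Palm version $\Pi_0$, and then the CLMM computation performed just before the statement shows the allocated region has density $\intensity(\Pi)\,\EE_0\!\left[\lambda(A_{\Pi_0}(0))\right] = \intensity(\Pi)\,\alpha = 1$, contradicting the positivity. Hence the unallocated density is $0$; feeding this back into the same computation gives $\EE_0\!\left[\lambda(A_{\Pi_0}(0))\right] = \alpha$, and combined with $\lambda(A_{\Pi_0}(0)) \le \alpha$ a.s.\ this yields $\lambda(A_{\Pi_0}(0)) = \alpha$ a.s.; by equivariance all cells have volume $\alpha$, so $\mathcal{A}$ is balanced. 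The extra head scheme is then immediate from the preceding Lemma, taking $X$ to be the unique $g \in \Pi$ with $0 \in A_\Pi(g)$.

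I expect the main obstacle to be the first step: verifying that the Gale--Shapley construction and its structural properties --- existence and uniqueness of the stable allocation, measurability and equivariance, properness, and the no-blocking-pair description of the limit --- transfer from $\RR^n$ to an arbitrary unimodular lcsc group. Nothing in it uses the linear structure of $\RR^n$, only properness of $d$ and non-atomicity of $\lambda$, so this should be routine, but it does require some care: in particular, in this generality distance bisectors may carry positive $\lambda$-measure, so the $T$-tie-breaking must be used to rank \emph{all} sites and points (not merely a null set of ties), and one must check that each stage of the procedure produces a bona fide measurable factor of $\Pi$.
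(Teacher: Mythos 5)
Your proof is correct, but it takes a genuinely different route from the paper's. You construct the allocation via the Holroyd--Peres stable marriage (Gale--Shapley) scheme, invoke the no-blocking-pair property to rule out the simultaneous presence of unallocated land and unsated points, and then close with the CLMM density bookkeeping. The paper instead runs a measurable Zorn-type argument: it considers the poset of \emph{proper} allocations ordered by cellwise inclusion, defines the coverage $c(\mathcal{A}) = \EE\bigl[\lambda\bigl(U \cap \bigcup_x A_x\bigr)\bigr]$, takes an increasing sequence whose coverage approaches the supremum, and then shows directly that any proper allocation with coverage strictly less than one can be strictly improved by matching ``wanting'' points (those with $\lambda(A_x) < \alpha$) to ``sharing'' points (those whose Voronoi cell is not yet fully allocated), using the fixed Borel isomorphism $I : (G,\lambda) \to ([0,\infty), \mathtt{Leb})$ to carve off a measurable chunk of land equivariantly. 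The upshot is a self-contained argument that never needs the existence, uniqueness, measurability, or limit-stability of the full Gale--Shapley fixed point --- only the much weaker fact that a non-maximal allocation can be extended one step. You correctly identify the transfer of the stable marriage machinery to a general unimodular lcsc group as the delicate point of your route; the paper's local-extension argument is chosen precisely to sidestep that. Both approaches, once the construction is in hand, finish with essentially the same CLMM computation showing $\EE_0[\lambda(A_{\Pi_0}(0))] = \intensity(\Pi)^{-1} \cdot \PP[0\ \text{allocated}]$ and hence that full coverage plus properness forces all cells to have volume exactly $\intensity(\Pi)^{-1}$.
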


\begin{proof}

We use a measurable Zorn's lemma style of argument. 

Let $\mathscr{A}$ denote the set of proper allocations, that is, those allocations $\mathcal{A}$ with $\lambda(A_x) \leq \intensity(\Pi)^{-1}$ for all $x \in 
\Pi$. We order this space in the following way: declare $\mathcal{A} \preceq \mathcal{B}$ if $A_x \subseteq B_x$ for all $x \in \Pi$.

We refer to the quantity
\[
    c(\mathcal{A}) = \EE\left[\lambda\left(U \cap \bigcup_{x \in \Pi} A_x\right)\right]
\]
as the \emph{coverage} of an allocation. 

We claim that a maximal allocation $\mathcal{A}_\infty$ exists, and that its coverage is one, so it is a balanced allocation.

To see that maximal allocations exist, let us define
\[
    C(\mathcal{A}) = \sup\{ c(\mathcal{B}) \mid \mathcal{A} \preceq \mathcal{B} \},
\]
and inductively define an allocation in the following way: let $\mathcal{A}_1$ be an arbitrary proper allocation (even empty), and choose $\mathcal{A}_{n+1}$ so that
\[
    c(\mathcal{A}_{n+1}) \leq C(\mathcal{A}_n) + \frac{1}{n}.
\]
Now let $\mathcal{A}_\infty$ be the union of the allocations $\mathcal{A}_n$ in the obvious sense. It is straightforward to see that $\mathcal{A}_\infty$ is a maximal allocation.

We now show that any proper allocation $\mathcal{A}$ with coverage strictly less than one is contained in a proper allocation $\mathcal{B}$ with $c(\mathcal{A}) < c(\mathcal{B})$, and consequently any maximal allocation is balanced.

If $x \in \Pi$ has $\lambda(A_x) < \intensity(\Pi)^{-1}$, then we refer to $x$ as \emph{wanting}. If $\mathcal{A}$ is not balanced, then there must exists points $g \in \Pi$ such that
\[
    V_\Pi(g) \not\subseteq \bigcup_{x \in \Pi} A_x,
\]
where $V$ denotes the \emph{tie-broken} Voronoi cells. We refer to these as \emph{sharers}.

The idea is simply that each point which is wanting will choose a sharer, and be allocated as much land as it can take. If multiple wanters apply to the same sharer, then we'll simply pick one lucky wanter, as it's enough for the proof. The only trick is to do all this in a measurable and equivariant fashion.

Let us fix an isomorphism as measure spaces $I : (G, \lambda) \to ([0, \infty), \texttt{Leb})$. Then $I_x(g) = I(x^{-1}g)$ is also an isomorphism of $(G, \lambda)$ with $([0, \infty), \texttt{Leb})$, but has the virtue of being equivariantly defined as $x$ varies.

Each point which is wanting \emph{applies} to its nearest sharer, and then the sharer picks one of these by choosing the closest wanter (and if this is not unique, it uses a tie-breaking function in the usual way to select one). Suppose $s \in \Pi$ is a sharer that chooses the wanter $w \in \Pi$. Choose $t > 0$ so that
\[
0 < \lambda\left((I_s^{-1}([0, t)) \cap V_\Pi(s) \cap \left(\bigcup_{x \in \Pi} A_x\right)^c\right) \leq \intensity(\Pi)^{-1} - \lambda(A_w),
\]
for instance by enumerating the positive rationals and choosing the first $t \in \mathbb{Q}$ for which the above is true.
We now define a new allocation $\mathcal{B}$ by declaring $B_x = A_x$ for all points except the lucky wanters $w \in \Pi$, for which
\[
    B_w = A_w \cup I_s^{-1}([0, t)) \cap V_\Pi(s) \cap \left(\bigcup_{x \in \Pi} A_x\right)^c.
\] 
Then $\mathcal{A} \prec \mathcal{B}$ strictly, as desired.
\end{proof}

\begin{remark}\label{groupoidmap}
The extra head scheme gives us a \emph{factor} map from the unit space of the action groupoid to the Palm groupoid. We now extend this to a factor map of their arrow spaces in the following way: define
\begin{align*}
    &\overrightarrow{E} : G \times (\MM, \mu) \to (\Marrow, \muarrow) \\
    &\overrightarrow{E}(\omega, g) = (E(\omega), \mathcal{E}_\omega^{-1} g \mathcal{E}_{g^{-1}\omega})
\end{align*}

One can readily verify that this map preserves the source map, target map, and composition rule. 

The following diagram (pictured in $\RR^2$) explains what is going on:

\begin{figure}[h]
\includegraphics[scale=0.3]{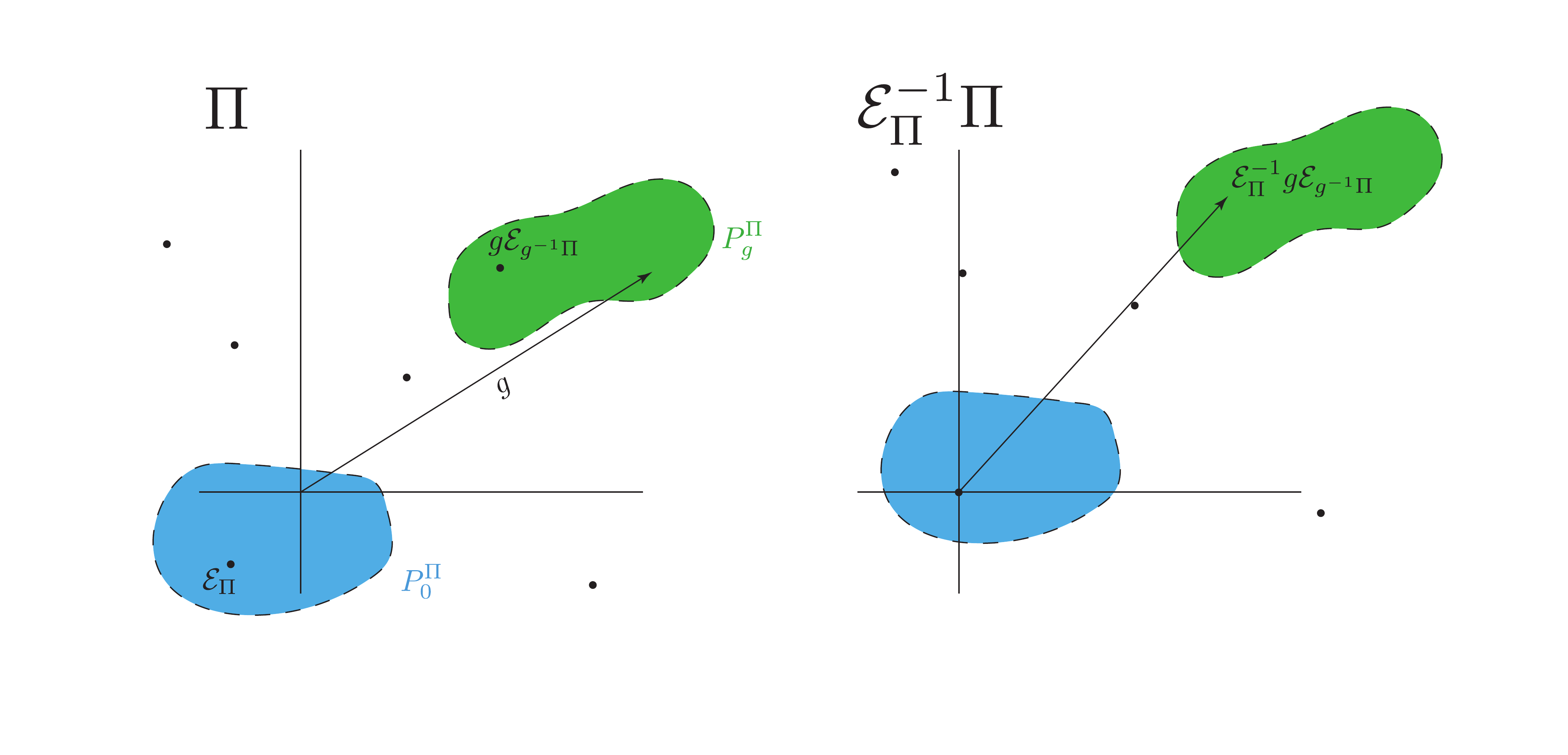}
\centering
\end{figure}

An element of the action groupoid $G \times \MM$ consists of a pair $(g, \omega)$. We view the configuration as a subset of $G$, and $g$ as an arrow pointing from the identity to $g$. 

This arrow lands in some cell of the equitable\footnote{In reality the partition will be much messier than the diagram.} partition, here shaded green. The map $\overrightarrow{E}$ simply treats this arrow as one going between the germs of the cells. The induction map is thus a kind of ``discretisation'' process.
\end{remark}

\begin{remark}

As a formal computation, the above seems to work for \emph{any} measurably defined partition (for instance, the Voronoi cells). The crucial feature of the extra head scheme is that it pushes forward the measure to the right measure.

\end{remark}

\begin{proof}[Proof of Theorem \ref{kazhdantheorem}]

If a point process admits no factor of IID connected Cayley graphs, then it certainly doesn't admit any deterministic ones, so we focus on the stronger statement. Let us write $\Pi$ for the Poisson point process and $\mu$ for its law. Then by the discussion at Remark \ref{cbercorrespondence} it suffices to show that there is no free pmp action $\Gamma \acts ([0,1]^\MMo, [0,1]^{\mu_0})$ which generates the rerooting equivalence relation $\Rel$ for \emph{any} countable group $\Gamma$.

So for the sake of contradiction we suppose that there is such an action. This defines a $\Gamma$-valued cocycle of the Palm groupoid in the following way\footnote{This is a standard construction known as the \emph{orbit equivalence cocycle}, although there's an extra inverse as an artifact of our conventions}:
\[
    c(\omega, g) = \gamma \text{, where } \gamma \in \Gamma \text{ is the \emph{unique} element such that } \gamma^{-1} \cdot \omega = g^{-1}\omega. 
\]
By the extra head scheme technique we can induce this to a cocycle $C : G \times ([0,1]^\MM, [0,1]^{\mu}) \to \Gamma$. Explicitly,
\[
    C(g, \omega) = \gamma, \text{ where } \gamma \in \Gamma \text{ is the unique element satisfying } \gamma^{-1} \cdot E(\omega) = E(g^{-1} \omega).
\]

Now we may apply Popa's cocycle superrigidity to find a homomorphism $\rho : G \to \Gamma$ and a measurable function $f : ([0,1]^\MM, [0,1]^{\mu}) \to \Gamma$ such that
\[
    C(g, \omega) = f(g\omega)\rho(g)f(\omega)^{-1}.
\]
By assumption $\ker(\rho)$ is noncompact. Note that for $g \in \ker \rho$, we have $C(g, \omega) = f(g \omega) f(\omega)^{-1}$. By definition of the cocycle then
\[
    f(g\omega)^{-1} E(g^{-1}\omega) = f(\omega)^{-1} E(\omega).
\]
That is, the function $\omega \mapsto f(\omega)^{-1} E(\omega)$ is $N$-invariant. The IID Poisson point process is a mixing action for $G$, and hence also for $N$ by noncompactness. Therefore this $N$-invariant function must be \emph{constant} by ergodicity. Note that $f(\omega)^{-1} E(\omega) \in [\omega]_\Rel$ for every $\omega$. Thus if this function is a constant $\Omega \in [0,1]^\MMo$, we would have $\PP[\Pi \in G \cdot \Omega] = 1$, but
\begin{align*}
    \PP[\Pi \in G \cdot \Omega] &= \PP[ \Pi_0 \in [\Omega]_\Rel] && \text{By Proposition \ref{transferprinciple}} \\
    &\leq \sum_{g \in \Omega} \PP[\Pi_0 = g^{-1}\Omega ] && \text{By definition of } \Rel \\
    &= 0, && 
\end{align*}
where the last line follows from the fact that the Palm measure of the Poisson has no atoms (see Theorem \ref{palmofpoisson}).
\end{proof}

The above proof can be pushed a little further:

\begin{thm}\label{theoremextension}
    Let $G$ be a locally compact and second countable nondiscrete group with Kazhdan's Property (T), and $\Pi$ be the Poisson point process on $G$. Assume further that $G$ has no compact normal subgroups.

    Then no thickening or thinning of $\Pi$ of finite intensity admits connected Cayley factor graphs, or even factor of IID connected Cayley factor graphs.
    
    Additionally, the Palm equivalence relation of $\Pi$ has the property that no induction or amplification of it can be freely generated by an action of a discrete group.
\end{thm}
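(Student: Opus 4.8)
The plan is to bootstrap from the proof of Theorem \ref{kazhdantheorem} and the induction machinery of Proposition \ref{induction}, using the observation that thinnings, thickenings, inductions, and amplifications of the Poisson point process all remain malleable, mixing, Property (T) equivalence relations — or at least admit factor maps that let us reduce to the Poisson case. Concretely, I would first treat \emph{thinnings}: if $\theta = \theta^A$ is a finite-intensity thinning of $\Pi$, then by the Palm calculus for thinnings (Example on thinnings) the Palm version $\theta(\Pi)_0$ is obtained from $\Pi_0$ by conditioning on $0 \in \theta(\Pi_0)$ and then applying $\theta$. The key point is that a connected Cayley factor graph of $\theta(\Pi)$, equivalently a free action of $\Gamma$ generating the Palm relation of $\theta(\Pi)$, can be pulled back via the factor map $\theta : (\MM, \mu) \to \MM$ to produce a cocycle $c$ of the \emph{sub}-relation of $\Rel$ consisting of those pairs $(\omega, g^{-1}\omega)$ with both endpoints in $A$. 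One then extends $c$ to all of $\Rel$ — this is possible because $\theta(\Pi)_0$ has the same orbit structure, and we can use an extra-head-style argument within each orbit — and inducts it up to the action groupoid $G \times ([0,1]^\MM, [0,1]^\mu)$ via Proposition \ref{induction}. Popa's cocycle superrigidity then forces $c$ to be cohomologous to a homomorphism $\rho : G \to \Gamma$ with noncompact kernel $N$, and the same mixing-plus-atomlessness contradiction as in the proof of Theorem \ref{kazhdantheorem} applies, since the Palm measure of a finite-intensity thinning of the Poisson is still nonatomic.

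For \emph{thickenings}, I would use Remark \ref{factorsdecompose} in reverse: a thickening $\Theta$ of finite intensity, after passing to input/output colours, is itself an invariant marked point process which is a factor of (an IID enrichment of) the Poisson. Its Palm relation is a sub-relation of the rerooting relation of a larger Poisson-type process on $G \times \Xi$, and the argument reduces to the previous case. Alternatively, and more cleanly, one observes that $\Pi$ is itself a thinning of $\Theta(\Pi)$, so a connected Cayley factor graph of $\Theta(\Pi)$ restricts (via the canonical parametrisation $p_\omega$ of Remark \ref{aperiodic}, applied to the purple-and-blue points) to a graphing of a sub-relation of $\Theta(\Pi)$'s Palm relation that contains the Palm relation of $\Pi$; since being generated by a free action of a countable group passes to... no — here one must be careful, since sub-relations of freely-generated relations need not be freely generated. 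So the safe route is the first one: realise the input/output colouring as a factor of the IID Poisson, observe that colouring does not affect malleability, mixing, or atomlessness of the Palm measure, and rerun the proof.

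For \emph{inductions and amplifications} of the Palm relation $(\MMo, \Rel, \mu_0)$, I would use the standard fact that Property (T), malleability, and the mixing/weak-mixing properties used by Popa's theorem are all invariant under these operations at the level of the associated groupoid (an induction of $\Rel$ corresponds to a compression/amplification, and the ambient action groupoid $G \times (\MM, \mu)$ gets correspondingly inducted while retaining malleability and mixing of the $G$-action since $G$ has Property (T)). The cleanest formulation: if $\mathcal{S}$ is an induction or amplification of $\Rel$ that is freely generated by a countable group $\Lambda$, this produces a cocycle of $\mathcal{S}$, hence (restricting to a corner, or inducing up) a cocycle of $\Rel$ itself valued in $\Lambda$ up to finite-index ambiguity; inducting to $G \times (\MM, \mu)$ and applying Popa gives a homomorphism $\rho : G \to \Lambda$ with noncompact kernel and the same contradiction. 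The main obstacle I anticipate is precisely the bookkeeping in passing between sub-relations/corners and the full relation while preserving \emph{freeness} of the generating action — freeness is not inherited by restriction to a sub-relation in general, so one must phrase everything in terms of \emph{cocycles} (which always restrict and induce without trouble) rather than in terms of generating actions, and then invoke that a free generating action yields an \emph{injective} cocycle, deriving the contradiction from the structure of $\rho$ and the atomlessness of the Poisson Palm measure exactly as before.
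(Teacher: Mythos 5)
Your overall strategy matches the paper's: pull back a generating $\Gamma$-action to a cocycle, induce it to the action groupoid $G \times ([0,1]^{\MM}, [0,1]^\mu)$ via an extra-head-scheme/balanced-allocation map, apply Popa's cocycle superrigidity, and derive a contradiction from mixing of the kernel of the resulting homomorphism. The paper packages the induction slightly differently (as a chain of groupoid morphisms $G \times ([0,1]^{\MM}, [0,1]^\mu) \to G \times (\MM, \Phi_*\mu) \to (\Marrow, \overrightarrow{(\Phi_*\mu)_0}) \to \Gamma$, so there is no need to extend a cocycle from $\Rel|_A$ to all of $\Rel$), but your ``extend via an extra-head-style argument within each orbit'' amounts to the same thing. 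However, there are two genuine gaps.

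First, the endgame is under-justified. You close the argument by appealing to ``the Palm measure of a finite-intensity thinning of the Poisson is still nonatomic,'' but this is not the statement you actually need, and you do not prove it. Once Popa gives you the untwisted cocycle and mixing of $\ker\rho$ makes $\omega \mapsto f(\omega)^{-1}E(\Phi(\omega))$ essentially constant, the conclusion is that $\Phi_*\mu$ \emph{concentrates on a single orbit}, and you must rule this out. The paper does this in Lemma \ref{noconcentration}: for a thickening concentrating on a single orbit, the differences of pairs of Poisson points would land in a fixed discrete subgroup, which fails because almost surely the Poisson has arbitrarily many points in some unit ball; for a thinning, there would be a positive-probability event of finding distinct $x, y \in B(0,R) \cap \Pi_0 \setminus \{0\}$ with $x^{-1}y \in \Pi_0$, which is impossible for the Poisson Palm. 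Your proposal replaces all of this with an unproved (and as stated, insufficient) nonatomicity claim, and for thickenings in particular you never address why the conclusion of the Popa argument is absurd.

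Second, the amplification case is waved through. ``Restricting to a corner, or inducing up'' and ``a cocycle of $\Rel$ itself valued in $\Lambda$ up to finite-index ambiguity'' are not constructions. The paper's device for amplification $\Rel_t$ with $t \in \NN$ is concrete: one uses the IID labels to select the level, mapping $\omega$ to $(g^{-1}\omega, i)$ where $g^{-1}\omega \in A$, the origin lies in the allocated cell of $g$ in the balanced allocation for $\theta_A(\Pi)$, and the $[0,1]$-label $\xi_g$ of $g$ satisfies $i/t < \xi_g < (i+1)/t$. This produces an honest cocycle $G \times ([0,1]^{\MM}, [0,1]^\mu) \to \Gamma$ to which Popa applies; your sketch does not supply such a map, and ``finite-index ambiguity'' has no role here. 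Spelling this out --- and pairing it with Lemma \ref{noconcentration} --- is what turns your outline into a proof.
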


We review the notions of induction and amplification for pmp cbers.

Let $(X, \mu, \Rel)$ be an ergodic pmp cber. If $A \subseteq X$ is measurable, then we write $\mu_A$ for the conditional measure
\[
    \mu_A(B) = \frac{\mu(A \cap B)}{\mu(A)},
\]
and $\Rel_A = \Rel \cap (A \times A)$ for the restricted equivalence relation. The resulting pmp cber $(A, \mu_A, \Rel_A)$ depends only on $t = \mu(A)$ by ergodicity, and the resulting pmp cber is denoted $\Rel_t$ and is called the \emph{induced} equivalence relation. This definition can be extended in a well-defined way for $t > 1$ and is referred to as \emph{amplification}: for integral $t$ we define $\Rel_t$ on $X \times [t]$ by
\[
    ((x, i), (y, j)) \in \Rel_t \text{ if } (x, y) \in \Rel,
\]
and use the product of $\mu$ with (normalised) counting measure on $[t]$. By combining induction and amplification, one defines $\Rel_t$ for arbitrary $t > 0$ in a well-defined way.

When the relation in question is the Palm equivalence relation of some point process $\Pi$ with distribution $\mu$, we are able to visualise inductions and amplifications concretely.

For an induction determined by a subset $A \subseteq \MMo$, we look at the associated $2$-colouring $\mathscr{C}_A$ (where $A$ points are coloured red and not-$A$ points are coloured blue). Then the equivalence relation consists of rooted configurations chosen according to $\mathscr{C}_A(\Pi_0)$, conditioned on the root being red, and one is allowed to shift the root \emph{only} to other red points.

For the amplification determined by $t \in \NN$, we look at the point process $\Pi \times [t] \subset G \times [t]$, and consider it as a $G \times \Sym(t)$ action. Here the appropriate groupoid to consider consists of configurations $\omega \subset G \times [t]$ rooted at the identity $0 \in G$ \emph{and} a particular level $l \in [t]$. That is, we use
\[
    \MMo^{[t]} = \{ (\omega, l) \in \MM(G \times [t]) \times [t] \mid (0, l) \in \omega \}
\]
as the unit space for the rerooting groupoid. If $A \subseteq \MMo$, then we also write 
\[
    A^{[t]} = \{ (\omega, l) \in \MMo^{[t]} \mid \pi(\omega) \in A \},
\]
where $\pi(\omega) \in \MM(G)$ is simply $\omega$ with the labels removed. 

In order to prove Theorem \ref{theoremextension}, we follow the same strategy as the above proof but simply arrive at a different contradiction. To that end, let us introduce the following definition:

\begin{defn}

Let $\Pi$ be an invariant point process. We say that $\Pi$ \emph{concentrates on a single orbit} if there exists a rooted configuration $\Omega \in \MMo$ such that $\PP[\Pi \in G.\Omega] = 1$.

\end{defn}

\begin{lem}

If $\Pi$ is an ergodic point process of finite intensity and it concentrates on a single orbit, then $\Pi$ is a lattice shift or a thickening of a lattice shift.

\end{lem}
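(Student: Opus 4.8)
The plan is to extract from the hypothesis an honest Borel, $G$-equivariant factor map $\Psi \colon (\MM,\mu) \to G/H$, where $H := \stab_G(\Omega)$, and then read off the structure of $\Pi$ from the decomposition of $\Omega$ into cosets of $H$. First I would record the elementary facts. Since $0 \in \Omega$, every $h \in H$ satisfies $h = h\cdot 0 \in h\Omega = \Omega$, so $H \subseteq \Omega$; as $\Omega$ is discrete and $H$ is closed, $H$ is a \emph{discrete subgroup} of $G$. From $h\Omega = \Omega$ for all $h \in H$ we get $H\Omega = \Omega$, so for each $x \in \Omega$ the whole right coset $Hx$ lies in $\Omega$, and hence $\Omega$ is the disjoint union of the right $H$-cosets it contains. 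Let $J = H\backslash\Omega$ index these cosets and choose representatives $t_j$ with $t_{j_0} = e$ for the coset $He = H$; since distinct cosets are disjoint, $t_j \notin H$ for $j \ne j_0$.

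Now the crux. For $\mu$-a.e.\ $\omega$ we have $\omega = g\Omega$ for some $g \in G$, and $\{g \in G : g\Omega = \omega\}$ is exactly one left coset $gH$, which defines a map $\Psi \colon G\Omega \to G/H$. The orbit parametrization $G/H \to \MM$, $gH \mapsto g\Omega$, is a continuous injection between standard Borel spaces, so by the Lusin--Souslin theorem its image $G\Omega$ is Borel and $\Psi$ is Borel; and $\Psi$ is visibly $G$-equivariant. Hence $\nu := \Psi_*\mu$ is a $G$-invariant probability measure on $G/H$, so \emph{by definition $H$ is a lattice}, and $\Psi$ realizes $\Pi$ (on the almost-sure set $G\Omega$) as the lattice shift of $H$ associated to $\nu$ (with $\nu$ moreover ergodic, since $\mu$ is).

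It remains to bound $|J|$ and identify the thickening. Writing $\Pi = g\Omega = \bigsqcup_{j\in J}(gH)t_j$ — a genuinely disjoint union, as $g(Ht_j)\cap g(Ht_{j'}) = \emptyset$ for $j \ne j'$ — exhibits $\Pi$ as a disjoint union over $J$ of right translates of the lattice shift $gH$. Since $G$ is unimodular, each such translate has the same intensity as $gH$, and $gH = (gH)t_{j_0}$ is one of the summands, so $0 < \intensity(gH) \le \intensity(\Pi) < \infty$; as intensity is additive over disjoint unions, $\intensity(\Pi) = |J|\cdot\intensity(gH)$, forcing $|J| < \infty$ (one may further note that $\intensity(gH) = \covol(H)^{-1}$ because the Palm version of the lattice shift of $H$ is $\delta_H$, by the Remark following the lattice-action example). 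Finally put $F = \{t_j : j \in J\}$: it is finite, contains $e$, and for $f \in F\setminus\{e\}$ we have $gH \cap (gH)f = g(H \cap Hf) = \empt$ since $f \notin H$, so the lattice shift $gH$ is $F$-separated and $\Pi = (gH)F = \Theta^F(gH)$. If $|F| = 1$ then $\Omega = H$ and $\Pi$ is the lattice shift of $H$; otherwise $\Pi$ is the constant thickening $\Theta^F$ of that lattice shift, as claimed.

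The main obstacle is the middle step: producing the honestly Borel equivariant factor $\Psi \colon (\MM,\mu) \to G/H$, so that the hypothesis ``$\Pi$ concentrates on one orbit'' becomes the precise statement ``$H$ is a lattice and $\Pi$ is the lattice shift of $H$''. Once that parametrization is secured, the rest is routine bookkeeping with cosets, disjointness, and the intensity formula (all of which relies only on facts already established in the excerpt).
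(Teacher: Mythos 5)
Your proof is correct and reaches the conclusion by a genuinely different route from the paper's. The paper's argument lives entirely at the Palm level: from $\PP[\Pi_0 \in [\Omega]_\Rel] = 1$ and the Mass Transport Principle it deduces that $\mu_0$ is uniform on the countable set $[\Omega]_\Rel$, which forces $[\Omega]_\Rel$ to be finite with positive atoms; it then notes that $\Gamma = \stab(\Omega) \subseteq \Omega$ is discrete and that the thinning $\theta(\Pi) = \{g \in \Pi : \stab(g^{-1}\Pi) = \Gamma\}$ is a nonempty finite-intensity invariant process, which yields finite covolume for $\Gamma$. You instead build the Borel orbit factor $\Psi \colon (\MM, \mu) \to G/H$ via Lusin--Souslin, obtain lattice-ness of $H = \stab(\Omega)$ directly from the $G$-invariant probability measure $\Psi_*\mu$, and only then deduce $|J| < \infty$ from additivity of intensity over the disjoint right translates $gHt_j$. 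Both arguments ultimately rest on unimodularity (you use it to equalize $\intensity(gHt_j) = \intensity(gH)$; the paper uses it inside the Mass Transport Principle), but your route reaches the lattice statement by a soft descriptive-set-theoretic argument rather than Palm calculus, and your intensity bookkeeping is arguably more transparent than the Palm atom count. One small point that deserves a sentence: to speak of $\intensity(gHt_j)$ you are implicitly treating $\omega \mapsto gHt_j$ as a Borel, $G$-equivariant thinning of $\Pi$; this is fine because $gHt_j$ depends only on the coset $\Psi(\omega) = gH$ and not on the chosen representative (two representatives differ by right multiplication by an element of $H$, which is absorbed into $Ht_j$), but it is worth stating explicitly before computing intensities.
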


\begin{proof}

By assumption and shift invariance of the event $\{\Pi \in G.\Omega\}$, we have that $\PP[\Pi_0 \in [\Omega]_\Rel] = 1$. By mass transport,
\[
    \PP[\Pi_0 = \Omega] = \PP[\Pi_0 = g^{-1}\Omega] \text{ for all } g \in \Omega,
\]
and so there exist finitely many $\Omega_1 = \Omega, \Omega_2, \ldots, \Omega_k \in \MMo$ such that for all $g \in \Omega$ there exists $i$ with $g^{-1}\Omega = \Omega_i$. 

We claim that $\Pi$ is a thickening of the \emph{lattice} $\Gamma = \stab(\Omega)$. 

First, $\stab(\Gamma) \subseteq \Omega$, so it is certainly discrete. Then
\[
    \theta(\Pi) = \{g \in \Pi \mid \stab(g^{-1}\Pi) = \Gamma \}
\]
is a $G$-invariant process supported on the cosets of $\Gamma$, hence $\Gamma$ has finite covolume, as desired.
\end{proof}

\begin{lem}\label{noconcentration}
No finite intensity thickening or thinning of the Poisson point process concentrates on a single orbit.
\end{lem}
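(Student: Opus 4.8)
The plan is to treat thinnings and thickenings separately, showing in each case that ``concentrates on a single orbit'' would force the Poisson process to contain, or be contained in, a fixed infinite pattern --- which is impossible for a Poisson process of positive finite intensity on a nondiscrete group.

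For a thinning $\theta$ of $\Pi$: if $\theta(\Pi)$ is empty almost surely there is nothing to prove (no orbit $G.\Omega$ contains the empty configuration), so assume $\intensity(\theta(\Pi)) > 0$, and suppose $\theta(\Pi)$ concentrates on $G.\Omega$. The set $G.\Omega$ is shift-invariant and $G.\Omega \cap \MMo = \{\omega^{-1}\Omega : \omega \in \Omega\}$ is countable, with each member an infinite configuration through the identity (infinite since $G$ is noncompact), so by Proposition \ref{transferprinciple} the Palm measure $\theta(\Pi)_0$ is supported on this countable set. The crucial step is then to identify $\theta(\Pi)_0$ explicitly: the Palm calculus for thinnings recorded above, combined with the Mecke--Slivnyak theorem (Example \ref{palmofpoisson}), shows that $\theta(\Pi)_0$ has the same law as $\theta(\Pi \cup \{0\})$ conditioned on the event $\{0 \in \theta(\Pi \cup \{0\})\}$, which has probability $\intensity(\theta(\Pi))/\intensity(\Pi) > 0$. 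Hence $\PP[\theta(\Pi \cup \{0\}) = \omega^\ast] > 0$ for some fixed $\omega^\ast \in G.\Omega \cap \MMo$. But $\theta$ is a thinning, so this event is contained in $\{\omega^\ast \setminus \{0\} \subseteq \Pi\} \subseteq \{x \in \Pi\}$ for any fixed $x \in \omega^\ast \setminus \{0\}$, and $\PP[x \in \Pi] = 0$ since the Poisson intensity measure is non-atomic --- a contradiction.

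For a thickening $\Theta$ of $\Pi$ of finite intensity concentrating on $G.\Omega$: here I would use the previous lemma. Since $\Theta(\Pi)$ is a factor of the mixing Poisson process it is ergodic, so that lemma gives that $\Theta(\Pi)$ is a lattice shift or a thickening of a lattice shift; in particular $\Gamma := \stab_G(\Omega)$ is a lattice and $\Omega$ is a finite union of right $\Gamma$-cosets. Because $\Gamma$ is a discrete subgroup and the metric is proper, a standard argument bounds $\abs{\Gamma \cap gK}$ uniformly in $g \in G$ for any fixed compact $K$; hence $M := \sup_{g \in G} N_B(g\Omega) < \infty$, where $B$ is the closed unit ball about the identity (compact, of positive Haar measure). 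Since $\Theta(\Pi) \in G.\Omega$ almost surely and $\Pi \subseteq \Theta(\Pi)$, this forces $N_B(\Pi) \le M$ almost surely, contradicting that $N_B(\Pi)$ is Poisson distributed with strictly positive finite parameter, hence unbounded.

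The step I expect to require the most care is the identification of $\theta(\Pi)_0$ in the thinning case: one must correctly chain the Palm formula for thinnings with Mecke--Slivnyak and verify that the conditioning event is non-null (which is where $\intensity(\theta(\Pi)) > 0$ is used). The thickening case is conceptually easier once the previous lemma is in hand --- the only thing to check is that a lattice-like configuration has a uniform bound on the number of points in a fixed ball. Neither Property (T) nor the hypothesis on compact normal subgroups is used in this lemma; they enter only elsewhere in the proof of Theorem \ref{theoremextension}.
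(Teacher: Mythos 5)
Your proof is correct, and it attacks both cases with the same two key observations the paper uses (for thinnings, Mecke--Slivnyak plus the Palm formula for thinnings; for thickenings, the previous lemma plus unboundedness of Poisson point counts), but you have filled in details that the paper's sketch elides, and in one spot you use a simpler Poisson fact. For the thinning case the paper gestures at a ``no degenerate configurations'' statement ($\PP[\exists$ distinct $x,y$ near $0$ with $x^{-1}y \in \Pi_0] > 0$ would have to hold), which requires a multivariate Mecke computation to rule out; you instead observe that $\theta(\Pi)_0$ must have an atom (being supported on a countable set), undo the conditioning to get $\PP[\omega^\ast \subseteq \Pi \cup \{0\}] > 0$ for a \emph{fixed} $\omega^\ast$, and then only need the elementary fact that a fixed nonzero point lies in $\Pi$ with probability zero --- a cleaner route. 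For the thickening case the paper asserts that all pairwise differences of points of $\Pi$ lie in the discrete group $\Gamma$, which is not literally what the previous lemma gives ($\Omega$ need not be a single coset of $\Gamma$); you correctly use finite intensity to conclude $\Omega$ is a finite union of $\Gamma$-cosets and hence uniformly locally finite, which is exactly what is needed to contradict unbounded Poisson counts in a fixed ball. One small point of hygiene: you should justify that each $\omega \in G.\Omega \cap \MMo$ is infinite --- you cite noncompactness, and indeed this is the paper's earlier proposition that nonempty point processes on noncompact groups are a.s.\ infinite applied to $\theta(\Pi)$ --- but this is stated correctly so it is only a matter of pointing to the right result.
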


\begin{proof}

If a thickening of the Poisson point process concentrates on a single orbit, then it would imply that there is a \emph{discrete} subgroup $\Gamma$ which contains the differences of all pairs of points from a sample of the Poisson point process. But this is impossible: take infinitely many disjoint unit balls $B_i$ in $G$. Then almost surely for every $k \in \NN$ there exists a ball $B_{i_k}$ of radius one such that $\abs{B_{i_k} \cap \Pi} \geq k$. In particular, the differences from these specific elements of $\Gamma$ will be nondiscrete in the unit ball of $G$.

If a thinning of the Poisson point process concentrates on a single orbit, then there would be an $R > 0$ such that
\[
    \PP\left[ \exists \text{ distinct } x, y \in B(0, R) \cap \Pi_0 \setminus \{0\} \text{ such that } x^{-1}y \in \Pi_0 \right] > 0,
\]
where $\Pi_0$ denotes the Palm version of the Poisson point process. But this is impossible.
\end{proof}

\begin{proof}[Proof of Theorem \ref{theoremextension}]

If $\Phi$ is a thickening or thinning of $\Pi$ with Palm equivalence relation freely generated by $\Gamma$, then there is a sequence of groupoid maps
\[
    G \times ([0,1]^\MM, \mu) \to G \times (\MM, \Phi_* \mu) \to (\Marrow, \overrightarrow{(\Phi_* \mu)_0}) \to \Gamma,
\]
where the first arrow is induced from $\Phi$ itself, the second arrow is from the extra head scheme for $\Phi(\Pi)$, and the final arrow is from the cocycle $c : (\Marrow, \overrightarrow{(\Phi_* \mu)_0}) \to \Gamma$. We let $C$ denote the composition of these three maps.

Again by Popa's cocycle superrigidity there exists a homomorphism $\rho : G \to \Gamma$ and a measurable function $f : ([0,1]^\MM, [0,1]^{\mu}) \to \Gamma$ such that 
\[
    C(g, \omega) = f(g\omega)\rho(g)f(\omega)^{-1}.
\]
By assumption $\ker(\rho)$ is noncompact. Note that for $g \in \ker \rho$, we have $C(g, \omega) = f(g \omega) f(\omega)^{-1}$. By definition of the cocycle then
\[
    f(g\omega)^{-1} E(\Phi(g^{-1}\omega)) = f(\omega)^{-1} E(\Phi(\omega)).
\]
That is, the function $\omega \mapsto f(\omega)^{-1} E(\Phi(\omega))$ is $N$-invariant. The IID Poisson point process is a mixing action for $G$, and hence also for $N$ by noncompactness. Therefore this $N$-invariant function must be \emph{constant} by ergodicity. Note that $f(\omega)^{-1} E(\Phi(\omega)) \in [\Phi(\omega)]_\Rel$ for every $\omega$. Hence $\Phi_*\mu$ concentrates on a single orbit, a contradiction by Lemma \ref{noconcentration}.

We denote by $\Rel_t$ the induced (when $0 < t < 1$) or amplified (when $t > 1$) Palm equivalence relation of the IID Poisson point process. We now show that these cannot be freely generated by any action of a finitely generated discrete group $\Gamma$. 

Let $A \subseteq [0,1]^\MMo$ denote a subset of size $t$. Then the existence of a generating action $\Gamma \acts (A, [0,1]^{\mu_0}_A)$ of $\Rel_t$ is the same as the existence of a factor of IID factor graph of the Poisson point process which lives on the set $\{g \in \Pi \mid g^{-1}\Pi \in A \}$, where it is a copy of the Cayley graph of $\Gamma$. This gives us a cocycle $(A, [0,1]^{\mu_0}_A) \to \Gamma$.

We now repeat the argument as earlier, except the induction map $[0,1]^\MM \to A$ from the extra head scheme uses the extra head scheme for the thinned process $\theta_A(\Pi)$. In essence, we construct a balanced allocation as before, but only the $A$-points of the process are allocated land. The induced cocycle $G \times ([0,1]^\MM, [0,1]^{\mu}) \to \Gamma$ untwists by Popa's cocycle superrigidity, and this gives a contradiction as before.

Finally, even the above equivalence relation amplified by some integer $t \in \NN$ \emph{still} cannot be freely generated by the action of a countable group $\Gamma$. We denote this equivalence relation by $A^{[t]}$ as before. Such an action would give us a cocycle $c : A^{[t]} \to \Gamma$, which we could then induce to a cocycle $C : G  \times ([0,1]^\MM, [0,1]^{\mu}) \to \Gamma$. Here the induction additionally uses the labels: we map $\omega \in [0,1]^\MM$ to $(g^{-1}\omega, i)$, where $g^{-1}\omega$ is in $A$, and $0$ is in the cell of $g$ with respect to $\omega$, and the label $\xi_g$ of $g$ in $\omega$ satisfies $i/t < \xi_g < (i+1)/t$. The rest of the argument follows as previously.
\end{proof}

\begin{remark}
Pmp cbers with the property that none of their amplifications or inductions can be freely generated by actions of discrete groups have been known since Furman \cite{furman1999orbit}. See Section 7 of \cite{popa2008cocycle} for further discussion, and the paper itself for examples with the property that their so-called \emph{fundamental group} is $\RR_+$. 
\end{remark}

\begin{question}
What is the fundamental group of the Palm equivalence relation of the Poisson point process on an lcsc group with Property (T)?
\end{question}

\begin{question}[Mikl\'{o}s Ab\'{e}rt]
Suppose $\mu$ is an ergodic point process on a group $G$ with Property (T) and no compact normal subgroups. If there exists a free action $\Gamma \acts (\MMo, \mu_0)$ generating the Palm equivalence relation, must $\Gamma$ be a lattice in $G$ and the point process the corresponding lattice shift?
\end{question}

\begin{remark}

There is a notion of Property (T) for pmp cbers (it works just as well for $r$-discrete pmp groupoids), see \cite{furman2009survey}. Thus one can ask about an analogue of Theorem \ref{clumpingsexist} -- does a group have Property (T) if and only if the Palm equivalence relation of all of its free point processes have Property (T)?

One can readily show that if $G$ has Property (T), then so too will the Palm equivalence relation of any free point process. In the discrete world, Zimmer showed that for $\Rel = \Rel(\Gamma \acts (X, \mu))$ the orbit equivalence relation of a free and weakly mixing pmp action, then if $\Rel$ has Property (T) then so too does $\Gamma$. Anantharaman-Delaroche removed the weak mixing requirement in \cite{anantharaman2005cohomology}.

\end{remark}

\appendix
\section{Point processes versus cross-sections}\label{crosssectionappendix}

We have taken the perspective that point processes are an \emph{intrinsically interesting} class of pmp actions of lcsc groups to study. They are also a fairly general class:

\begin{prop}\label{representationtheorem}

Every free and pmp action of a \emph{nondiscrete} lcsc group $G$ on a standard Borel measure space $(X, \mu)$ is abstractly isomorphic to a finite intensity point process.

\end{prop}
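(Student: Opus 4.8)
The plan is to \emph{discretise} the action using cross-section theory, which is precisely the connection advertised by the title of this appendix. The one external input is the classical cross-section theorem for free actions of lcsc groups (Feldman--Hahn--Moore, together with the standard refinement producing a \emph{lacunary} section; for flows this goes back to Forrest): after discarding a null invariant set we may assume the action is a genuine free Borel action, and then there is a Borel set $C \subseteq X$ and a neighbourhood $V$ of $e \in G$ such that, writing
\[
    \Pi(x) := \{ g \in G \mid g^{-1}x \in C \},
\]
the set $\Pi(x)$ is nonempty for every $x$ (completeness: $C$ meets every orbit) and the left translates $\{ gV : g \in \Pi(x) \}$ are pairwise disjoint (lacunarity). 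Freeness is what guarantees that distinct elements of $\Pi(x)$ send $x$ to distinct points of $C$.

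With the section in hand I would use $C$ itself as the mark space and define $\hat\Pi : X \to C^\MM(G)$ by
\[
    \hat\Pi(x) = \{ (g, g^{-1}x) \in G \times C \mid g^{-1}x \in C \}.
\]
There are then three routine verifications. First, lacunarity forces the $G$-coordinates of $\hat\Pi(x)$ to be $V$-separated in the $\sigma$-compact group $G$, so $\hat\Pi(x)$ is a discrete subset of $G \times C$, and $\hat\Pi$ is Borel (this uses only that $C$ and the action are Borel, via a countable family of Borel selections, exactly as in the cross-section literature). Second, equivariance $\hat\Pi(hx) = h \cdot \hat\Pi(x)$ is immediate from the identity $g^{-1}hx = (h^{-1}g)^{-1}x$. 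Third, $\hat\Pi$ is injective: from \emph{any} $(g, c) \in \hat\Pi(x)$ one recovers $x = gc$, and selecting the datum whose $g$ is $d$-closest to $e$ (with Borel tie-breaking, $d$ the fixed proper left-invariant metric) produces a Borel left inverse. By Lusin--Souslin, $\hat\Pi$ is therefore a Borel isomorphism onto a Borel subset of $C^\MM(G)$ intertwining the $G$-actions; hence $\nu := \hat\Pi_*\mu$ is a $G$-invariant probability measure on $C^\MM(G)$ and $\hat\Pi : (X,\mu) \to (C^\MM(G), \nu)$ is an isomorphism of pmp $G$-actions. (Marked point processes are admitted as point processes by the conventions of Section~\ref{basicdefs}.)

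Finite intensity is then \emph{forced} by lacunarity. Fix $U \subseteq G$ relatively compact of unit Haar volume. For each $x$ the sets $\{ gV : g \in \Pi(x) \cap U \}$ are pairwise disjoint and contained in $UV$, which is relatively compact, so by left-invariance of $\lambda$,
\[
    N_U(\hat\Pi(x)) = \abs{\Pi(x) \cap U} \leq \frac{\lambda(UV)}{\lambda(V)} < \infty
\]
uniformly in $x$; hence $\intensity(\nu) = \EE_\nu[N_U] \leq \lambda(UV)/\lambda(V) < \infty$, which finishes the proof.

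The hard part is entirely the black box: producing the \emph{lacunary} complete Borel section for a free action of a nondiscrete lcsc group. Everything downstream of it --- equivariance, Borel measurability of $\hat\Pi$ and of its inverse, invariance of $\nu$, and the intensity bound --- is bookkeeping. The one point worth stressing is that lacunarity (a uniform lower bound on the gaps of $\Pi(x)$), rather than mere countability-per-orbit, is exactly what upgrades ``locally finite'' to ``finite intensity'': without it one would only obtain a representation as a point process of possibly infinite intensity.
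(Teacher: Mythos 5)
Your construction is essentially the first half of the paper's proof. You build a lacunary complete section $C$ and the equivariant injection $\hat\Pi : X \to C^\MM(G)$, which is precisely the map the paper calls $\mathscr{V}$ in Appendix~\ref{crosssectionappendix}, and your verifications of Borelness, equivariance, injectivity, and the intensity bound $\intensity(\nu)\leq\lambda(UV)/\lambda(V)$ are all correct and in fact more explicit than anything the paper writes down. That bound cleanly identifies lacunarity as the source of finite intensity, which the paper leaves implicit.

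The gap is your parenthetical escape hatch: you stop with a $C$-\emph{marked} point process and declare victory by appealing to the convention that ``point process'' can mean ``marked point process.'' That is not what the statement means here, and the paper's own proof makes this unambiguous: after obtaining the marked representation via the cross-section, it invokes Proposition~\ref{abstractlyisom} (``local encoding'') specifically to pass to an \emph{unmarked} point process. Proposition~\ref{abstractlyisom} exists precisely because this step is nontrivial -- one must encode the mark $c\in C$ at each point $g$ of the configuration as a small finite decoration inside a ball $B(g,\delta/100)$, using a Borel isomorphism of $C$ with a suitable standard Borel family of local configurations, and then check that the decorated configuration still determines the original marked one. Without that step the proposition, as stated and as used, is not proved. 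So you should finish by citing (or reproducing) Proposition~\ref{abstractlyisom}, applied to the uniformly separated $C$-marked process $\hat\Pi_*\mu$ you have constructed; the uniform separation parameter $\delta$ (which you already have from lacunarity) is exactly what the local encoding needs to have room to decorate.
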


This is similar to the following fact: let $\Gamma \acts (X, \mu)$ be a pmp action of a discrete group $\Gamma$. The \emph{symbolic dynamics} of this action is the map
\begin{align*}
    &\Sigma : (X, \mu) \to X^\Gamma \\
    &\Sigma_x(\gamma) = \gamma^{-1}x.
\end{align*}
This is an injective and equivariant map, so we may identify the action $\Gamma \acts (X, \mu)$ with the invariant colouring action $\Gamma \acts (X^\Gamma, \Sigma_* \mu)$.

In this way, we see that all pmp actions of discrete groups are isomorphic to invariant colourings\footnote{If desired, one can fix a Borel isomorphism $X \cong [0,1]$ so that the colouring space is the same for all actions}.

A standard technique in the study of free pmp actions of lcsc groups is to analyse their associated \emph{cross-sections}. This will gives an analogue of symbolic dynamics for nondiscrete groups.

\begin{defn}
Let $G \acts (X, \mu)$ be a pmp action on a standard Borel measure space $(X, \mu)$.

A \emph{discrete cross-section} for the action is a Borel subset $Y \subset X$ such that for $\mu$-every $x \in X$ the set $\{g \in G \mid g^{-1}x \in Y \}$ is a discrete and non-empty subset of $G$. 
\end{defn}

\begin{example}
The set $\MMo \subset \MM$ is a discrete cross-section \emph{for all} non-empty point process actions $G \acts (\MM, \mu)$.
\end{example}

There is a sense in which this $\MMo$ is the \emph{only} cross-section.

Fix such a cross-section $Y \subset X$. We associate to this data two maps
\begin{align*}
    &\mathcal{V} : (X, \mu) \to \MM &&  \mathscr{V} : (X, \mu) \to Y^\MM  \\
    &\mathcal{V}_x = \{g \in G \mid g^{-1}x \in Y \} &&  \mathscr{V}_x = \{(g, g^{-1}x) \in G \times Y \mid g^{-1}x \in Y \}.
\end{align*}

These are equivariant maps, and the second one is always injective. In particular\footnote{Recall that an \emph{injective} map between standard Borel spaces is always a Borel isomorphism onto its image}, we see that every action which admits a cross-section also admits a point process factor, and is isomorphic to a \emph{marked} point process.

Note that $\mathcal{V}^{-1}(\MMo) = Y$. In this way we see that \emph{a discrete cross-section is the same thing as an unmarked point process factor}.

\begin{remark}[Terminological discussion]

If $\mathcal{P}(\omega)$ is some property of discrete subsets $\omega$ in $G$, then we can investigate discrete cross-sections of actions $G \acts (X, \mu)$ such that the associated subset $\mathcal{V}_x$ satisfies $\mathcal{P}$ for $\mu$ almost every $x \in X$. 

For instance, $\mathcal{P}(\omega)$ might be the property ``$\omega$ is uniformly discrete'' or ``$\omega$ is a net''. We will refer to a discrete cross-section such that $\mathcal{P}(\mathcal{V}_x)$ is satisfied for $\mu$ almost every $x \in X$ as a \emph{$\mathcal{P}$ cross-section}.

To the author's knowledge, a term like ``discrete cross-section'' does not appear in the literature. One finds instead discussion of \emph{lacunary cross-sections} (an admittedly more romantic name for what would be uniformly discrete cross-section), or \emph{cocompact cross-sections} (which we would refer to as net cross-sections).

Note that if $G \acts (\MM, \mu)$ is the Poisson point process action, then $\MMo$ is \emph{not} a lacunary cross-section. It is for this reason that we feel the terminology should be modified slightly.

\end{remark}

\begin{thm}[\cite{MR417388}, see also \cite{MR3335405}]\label{crosssectionsexist}

Every free and \emph{nonsingular}\footnote{Recall that an action is \emph{nonsingular} if it preserves null sets, that is, if $\mu(A) = 0$ then $\mu(gA) = 0$ for all $g \in G$} action of an lcsc group on a standard probability space admits a discrete cross-section. Moreover, the cross-section can be chosen to be uniformly separated and even a net.

\end{thm}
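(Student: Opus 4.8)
The plan is to reduce the statement to the construction of one Borel set that is \emph{at once} uniformly separated and a net, and then to build such a set by a Borel maximality argument. This is essentially Forrest's theorem \cite{MR417388}, so I will only lay out the strategy rather than grind through the selection-theoretic details.

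First I would fix a proper left-invariant metric $d$ on $G$ (Struble's theorem, recalled above) and reduce, via the Becker--Kechris theorem that every Borel action of a Polish group on a standard Borel space is Borel isomorphic to a continuous action on a Polish space, to the case where $G \acts X$ is continuous and $X$ is Polish. I claim the whole statement then follows from: for every $r > 0$ there is a Borel set $Y \subseteq X$ that is \emph{$r$-separated along orbits} --- whenever $x \neq y$ lie in $Y$ with $y = gx$ then $d(e,g) \geq r$ --- and \emph{$r$-maximal} --- no further point of $X$ can be added without destroying $r$-separation. Indeed, $r$-separation together with freeness makes $\{g \in G : g^{-1}x \in Y\}$ an $r$-separated subset of $(G,d)$, hence discrete; $r$-maximality makes it non-empty and forces every orbit point to lie within $d$-distance $r$ of $Y$, so that $\overline{B_d(e,r)} \cdot Y$ covers each orbit and $Y$ is a net; and one may take $r$ as small as desired. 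So the ``uniformly separated'' and ``net'' strengthenings are automatic once the claim holds for a single small $r$.

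For the claim I would phrase it graph-theoretically. Put $R = \{(x, gx) : 0 < d(e,g) < r\} \subseteq X \times X$; this is the image of the Borel set $\{(x,g) : 0 < d(e,g) < r\}$ under the map $(x,g) \mapsto (x,gx)$, which is injective exactly because the action is free, so $R$ is Borel by the Luzin--Souslin theorem, and each ball $\{gx : d(e,g) < r\}$ has compact closure (continuous image of a $d$-ball, using that $d$ is proper and the action is now continuous). An $r$-separated, $r$-maximal set is precisely an $R$-independent, $R$-maximal set. To produce one in a Borel (or, in the measured category we actually need, merely $\mu$-measurable) way I would run a greedy exhaustion: fix a countable basis $\{U_n\}$ of $X$ and a countable dense set $\{g_n\} \subseteq G$, set $Y_0 = \emptyset$, and at stage $n$ enlarge $Y_{n-1}$ by a Borel $R$-independent subset of $U_n \setminus \bigl(R(Y_{n-1}) \cup R^{-1}(Y_{n-1})\bigr)$ maximal among such (in the measured version, one of at least half the supremal $\mu$-measure of such subsets), the subset being selected through a Borel uniformization theorem. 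Then $Y = \bigcup_n Y_n$ is Borel, $R$-independent by construction, and $R$-maximal, since any point that could still legitimately be added would survive into some $U_n$ at a stage at which it is already non-addable --- a contradiction.

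The hard part is precisely the ``add a maximal $R$-independent Borel subset'' step: since the $R$-balls are compact but typically uncountable, the classical countable marker lemma does not apply verbatim, and here one genuinely exploits the local compactness and second countability of $G$ --- approximating the continuum of candidate translates by the countable dense set $\{g_n\}$, with continuity of the action (valid after the Becker--Kechris reduction) --- together with a measurable selection theorem (Luzin--Novikov, or Jankov--von Neumann) to keep every $Y_n$ Borel. In the purely measured setting the burden is lighter: it suffices to know that every positive-measure Borel set contains a positive-measure $R$-independent Borel subset capturing a fixed fraction of the optimum, after which the exhaustion terminates modulo a $\mu$-null set; this is where nonsingularity enters, making the almost-everywhere statements stable under the $G$-action. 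Granting this, the theorem --- including the ``moreover'' clause --- is immediate from the reduction in the second paragraph.
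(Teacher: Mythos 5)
The paper states this result without proof, citing Forrest \cite{MR417388} and \cite{MR3335405}; there is no in-text argument for your sketch to be measured against, so the evaluation below is on the merits.

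Your top-level reduction is fine: pass to a continuous action on a Polish space via Becker--Kechris, fix a proper left-invariant metric, note that an $R$-independent, $R$-maximal Borel set is automatically a uniformly separated net cross-section, and that $R$ is Borel by Luzin--Souslin together with freeness. The gap is in the recursive step. For a generic basic open $U_n$ and $g$ close to $e$, both $x$ and $gx$ can lie in $U_n$, so $R|_{U_n}$ is typically nontrivial with \emph{uncountable} sections (each $R$-ball is a homeomorphic copy of an open subset of $G$); ``take a maximal $R$-independent Borel subset of $U_n$'' is therefore exactly as hard as the problem you started with, and the selection theorems you name do not resolve it --- Luzin--Novikov needs countable sections, and Jankov--von Neumann gives only $\sigma(\mathbf{\Sigma}^1_1)$-measurable selections, not Borel ones. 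The actual engine of Forrest's theorem is a \emph{tiling/partition lemma} that the sketch should isolate and prove: for a compact $K \subseteq G$ with $e \notin K$, a compactness argument using freeness and continuity produces, for each $x$, a neighborhood $V_x$ with $V_x \cap KV_x = \emptyset$, and second countability then gives a countable Borel partition of $X$ into $K$-discrete pieces. Once the recursion runs over such pieces rather than over an arbitrary countable basis, the stage-$n$ enlargement requires no selection theorem at all (one simply subtracts the $K$-translates of what has been chosen so far). Combining this over a suitable exhaustion of $G \setminus \{e\}$ by compacta, together with a final thinning along the resulting locally countable return relation --- where the classical marker and maximal-independent-set machinery does apply --- yields the uniformly separated net. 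Your measured relaxation is a corollary of this same lemma rather than a shortcut around it: the assertion that positive-measure sets contain positive-measure $R$-independent subsets is exactly the local statement the lemma encodes. So the strategy is pointed the right way, but the step you park as ``selection-theoretic details'' is the theorem, and it is resolved by the $K$-discrete partition lemma, not by a uniformization theorem.
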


\begin{remark}

In fact, cross-sections of actions are known to exist in great generality, see \cite{kechris2019theory} for further examples.

Our keen interest in \emph{free} actions is because it allows us to identify the orbit $Gx$ of any point $x \in X$ with $G$ itself. One can run into issues in the absence of this.

For instance, let $\RR \times \RR$ act on $\{\bullet\} \times \RR/\ZZ$ diagonally, where $\{\bullet\}$ denotes a singleton with trivial action.

Then $\{ (\bullet, 0) \}$ is a lacunary cross-section for the action. If we try to construct a map $\mathcal{V}$ as before, then we would map $(\bullet, x) \in \{\bullet\} \times \RR/\ZZ$ to the subset of $\RR^2$
\[
    \mathcal{V}_{(\bullet, x)} = \RR \times \{ x + \ZZ \}.
\]
In this way one has constructed a \emph{random closed set} as a factor of the action, but it is not a point process. 
\end{remark}

The following theorem is described as folklore in \cite{MR3335405}:

\begin{thm}[Folklore theorem, see Proposition 4.3 of \cite{MR3335405}]

Let $G$ be a unimodular lcsc group, and $G \acts (X, \mu)$ a pmp action on a standard Borel space. Fix a lacunary cross-section $Y \subset X$ for the action. Then:
\begin{enumerate}
    \item The orbit equivalence relation of $G \acts X$ restricts to a cber $\Rel$ on $Y$,
    \item There exists an $\Rel$-invariant probability measure $\nu$ on $Y$,
    \item The action $G \acts (X, \mu)$ is ergodic if and only if the cber $(Y, \Rel, \nu)$ is ergodic,
    \item The group $G$ is noncompact if and only if the cber is aperiodic $\nu$ almost everywhere, and
    \item The group $G$ is amenable if and only if the cber $(Y, \Rel, \nu)$ is amenable.
\end{enumerate}
\end{thm}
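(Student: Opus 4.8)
The plan is to derive every part of the statement from the Palm-groupoid machinery of Section \ref{groupoid}, applied to the marked point process produced by the cross-section. Recall that the equivariant injection $x \mapsto \mathscr{V}_x$ identifies $G \acts (X,\mu)$ with the $Y$-marked point process $\Pi := \mathscr{V}_*\mu$ on $G$. Lacunarity of $Y$ says exactly that $\Pi$ is $\delta$-separated for some $\delta > 0$: the left balls of radius $\delta/2$ around the points of $\mathcal{V}_x$ are disjoint, so $\#(U \cap \mathcal{V}_x)$ is bounded uniformly in $x$ for every relatively compact $U$, and hence $\Pi$ has finite intensity; the intensity is positive because $\mathcal{V}_x$ is non-empty almost surely (and an invariant point process of intensity zero is empty almost surely). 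So $\Pi$ is a finite-intensity point process and the whole apparatus of Palm measures applies to it.

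First I would obtain (1) and (2). Being of finite positive intensity, $\Pi$ has a Palm measure $\mu_0$ on the $Y$-marked rooted configuration space $\MMo$, and by Proposition \ref{pmpgroupoid}, using that $G$ is unimodular, the resulting Palm groupoid $(\Marrow, \muarrow)$ is probability measure preserving. Now the ``mark at the root'' map $\mathscr{V}_x \mapsto x$ is a Borel bijection of the full-$\mu_0$-measure set $\{\mathscr{V}_y : y \in Y\} \subseteq \MMo$ onto $Y$; it carries the rerooting equivalence relation to the restriction $\Rel$ of the orbit equivalence relation to $Y$ — which is therefore a genuine countable Borel equivalence relation, since each $\Rel$-class is in bijection with the discrete (hence countable) set $\mathcal{V}_x$ — and it pushes $\mu_0$ forward to a probability measure $\nu$ on $Y$. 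The $\Rel$-invariance of $\nu$ is precisely the pmp property of the Palm groupoid, i.e. Proposition \ref{pmpgroupoid}. Unwinding the definition of the Palm measure, $\nu(A) = \intensity(\Pi)^{-1} \lambda(U)^{-1} \EE_\mu[\#\{g \in U : g^{-1}x \in A\}]$ for Borel $A \subseteq Y$ and any Borel $U \subseteq G$ of finite positive Haar measure. (When $G \acts (X,\mu)$ is essentially free — which is the case in the applications and under the hypotheses of Theorem \ref{crosssectionsexist} — $\Pi$ is a free point process, the Palm groupoid is literally the equivalence relation $(\MMo, \Rel, \mu_0)$, and no isotropy bookkeeping is needed.)

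Items (3) and (4) are then transfers along this identification. For ergodicity: by Proposition \ref{transferprinciple} the point process action $G \acts (Y^\MM, \Pi)$ is ergodic if and only if its Palm equivalence relation is, and the former is $G \acts (X,\mu)$ while the latter is $(Y, \Rel, \nu)$. For aperiodicity: the $\Rel$-class of the point of $Y$ corresponding to $x$ is in bijection with $\mathcal{V}_x$, so it is infinite if and only if $\mathcal{V}_x$ is; since $\mathcal{V}_x$ is non-empty almost surely, the proposition of Section \ref{basicdefs} that a non-empty point process is almost surely infinite exactly when $G$ is noncompact gives (4). For amenability (5): if $G$ is amenable, then the argument in the proof of Theorem \ref{clumpingsexist} — push a left-invariant mean on $G$ through the Voronoi cells of $\Pi$ to build local means on the classes — shows $(\MMo, \Rel, \mu_0)$, and hence $(Y, \Rel, \nu)$, is amenable; conversely, if $(Y, \Rel, \nu)$ is amenable then by Connes-Feldman-Weiss it is hyperfinite, which is the same as $\Pi$ admitting a one-ended clumping, so the converse half of Theorem \ref{clumpingsexist} forces $G$ to be amenable (equivalently, one invokes the standard fact that the orbit equivalence relation of an essentially free pmp action is amenable if and only if the group is; see \cite{kechris2004topics}).

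The only place unimodularity is genuinely used is the $\Rel$-invariance of $\nu$ in (2), and once Proposition \ref{pmpgroupoid} is available this is a citation rather than an obstacle. The one real subtlety is the converse direction of (5) without an essential-freeness hypothesis: there the clean ``point process $\leftrightarrow$ cber'' dictionary behind Theorem \ref{clumpingsexist} must be replaced by its $r$-discrete-groupoid version, so I would either state (5) under the freeness assumption that holds in all the applications of this paper, or simply defer to Proposition 4.3 of \cite{MR3335405} for the fully general statement.
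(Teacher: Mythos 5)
The paper does not supply a proof of this theorem: it is stated with a citation to Proposition 4.3 of \cite{MR3335405} and is immediately followed by the remark that Theorem \ref{correspondencetheorem} can be viewed as a rediscovery of it. Your proposal carries out what that remark suggests --- encode the cross-section as the $Y$-marked point process $\mathscr{V}_*\mu$, observe that lacunarity gives finite positive intensity, take the Palm measure and transport it to $\nu$ via the mark-at-the-root map; items (2) and (3) then become Propositions \ref{pmpgroupoid} and \ref{transferprinciple} --- and this much is correct. The freeness issue you raise for (5), however, is sharper than you let on and it affects (4) as well. The bijection you invoke in (4), between the $\Rel$-class of $y$ and the discrete set $\mathcal{V}_y$, holds only when stabilisers are trivial; without that hypothesis items (4) and (5) are actually \emph{false}, not merely out of reach of Theorem \ref{clumpingsexist}. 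For instance, let $G = \PSL(2,\RR)$ act on $G/\Gamma$ for a lattice $\Gamma$, with cross-section $Y = \{e\Gamma\}$: then $\mathcal{V}_{g\Gamma} = g\Gamma$ is uniformly discrete and infinite, but the restricted equivalence relation on $Y$ has singleton classes (so it is periodic and amenable) while $G$ is noncompact and nonamenable. So rather than deferring to \cite{MR3335405} for a ``fully general statement'', the correct fix is to add essential freeness to the hypotheses, consistent with the freeness assumptions in force throughout the rest of the paper; under essential freeness your argument does establish all five items along the lines of the paper's intended rediscovery.
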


The mathematical content of Theorem \ref{correspondencetheorem} can be viewed as a rediscovery of the above theorem with different proofs, together with interpretation of factor constructions as objects living on the Palm groupoid.

Marked point processes can be a useful contrivance, but aren't strictly necessary:

\begin{prop}\label{abstractlyisom}
Every free point process $\Pi$ on a nondiscrete group with marks from a standard Borel space $\Xi$ is abstractly isomorphic to an \emph{unmarked} point process.
\end{prop}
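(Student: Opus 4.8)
The plan is to reduce the marked case to the unmarked case by encoding the marks geometrically, using the fact that $G$ is nondiscrete to ``make room'' for the encoding. Since $\Xi$ is a standard Borel space, we may fix a Borel isomorphism $\Xi \cong [0,1]$ (or a subset thereof), so it suffices to handle $[0,1]$-marked point processes. The idea is that a mark $\xi \in [0,1]$ attached to a point $g \in \omega$ can be recorded by placing an additional, auxiliary point of the (unmarked) configuration at a location near $g$ that depends measurably and equivariantly on $\xi$ --- for instance at $g \cdot h(\xi)$ for a suitable injective Borel map $h : [0,1] \to G$ whose image lies in a small ball around the identity. The resulting unmarked configuration is the union of the original points together with these ``satellite'' points, and the original marked process can be recovered from it.

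First I would fix an injective Borel map $h : [0,1] \to G$ with $h(0) = \ldots$ --- more precisely, choose $0 < \delta$ small enough that the closed $\delta$-ball $\overline{B}(0,\delta)$ about the identity is ``small'' in a sense to be made precise, and let $h$ be a Borel isomorphism from $[0,1]$ onto a Borel subset of $B(0, \delta/2) \setminus \{0\}$; such an $h$ exists because $G$ is nondiscrete, so $B(0,\delta/2)$ is an uncountable standard Borel space. Given a $[0,1]$-marked configuration $\omega$ with underlying set $\pi(\omega)$, I would define
\[
    \Psi(\omega) = \pi(\omega) \cup \{\, g \cdot h(\omega_g) \mid g \in \pi(\omega) \,\},
\]
where $\omega_g \in [0,1]$ is the mark of $g$. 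The map $\Psi$ is manifestly $G$-equivariant. The key point is to show that, on a conull set, $\Psi$ is injective and its inverse is Borel. This requires that for a $\mu$-typical $\omega$ the points of $\pi(\omega)$ are genuinely separated --- say $2\delta$-separated --- so that the satellite point $g \cdot h(\omega_g)$ lies strictly closer to $g$ than to any other original point, and moreover so that distinct satellite points never collide and no satellite point coincides with an original point. From such a configuration $\Psi(\omega)$ one recovers $\pi(\omega)$ as the set of points $x$ that have at least one other point within distance $\delta$ ``to their own satellite'' --- more carefully, one identifies each original point as the unique nearest original-candidate, then reads off the mark of $g$ as $h^{-1}(g^{-1} \cdot (\text{its satellite}))$.

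The main obstacle is that a general free marked point process of finite intensity need not be uniformly separated: points of $\pi(\omega)$ can be arbitrarily close together, so there is no fixed $\delta$ that works for all $\omega$. I would handle this by first passing to an abstractly isomorphic marked point process that \emph{is} uniformly separated. Concretely, one applies (the marked version of) Proposition \ref{representationtheorem}: the action $G \acts (\Xi^\MM, \mu)$ is free and pmp, hence by Theorem \ref{crosssectionsexist} it admits a net (in particular uniformly $2\delta$-separated) cross-section $Y$, and via the map $\mathscr{V}$ of the appendix this action is abstractly isomorphic to a $Y$-marked --- equivalently, after reindexing, a standard-Borel-marked --- point process whose underlying configurations are $2\delta$-separated almost surely. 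Replacing the original process by this isomorphic copy, the encoding $\Psi$ above is well-defined, injective, and bimeasurable on a conull set, so it exhibits the marked process as abstractly isomorphic to the unmarked point process $\Psi_*\mu$. One should check $\Psi_*\mu$ still has finite intensity, which is clear since $\Psi$ at most doubles the number of points in any bounded region. Finally, since abstract isomorphism is an equivalence relation, the original marked process is abstractly isomorphic to an unmarked one, as claimed.
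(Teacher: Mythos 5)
Your overall architecture matches the paper's exactly: first pass to an abstractly isomorphic, uniformly separated marked process via a lacunary cross-section (Theorem \ref{crosssectionsexist} and the map $\mathscr{V}$), then encode the mark of each point into small-scale geometry near that point, and finally decode by local inspection. The difference is the encoding device: you add a single ``satellite'' at $g\,h(\omega_g)$, whereas the paper replaces each labelled point $g$ by a small \emph{configuration} $g\,I(\xi_g)$, where $I(\xi_g)$ lives in a carefully chosen space $X$ of rooted patterns in which the root is the unique $\delta/200$-isolated point and every other point has a near neighbour.

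The gap in your version is the decoding step. After applying $\Psi$ you obtain, within each well-separated cluster, an unordered pair $\{a,b\}$ with $d(a,b)<\delta/2$, and you must decide which element is the original and which is the satellite. Your text ``one identifies each original point as the unique nearest original-candidate'' is circular: both members of the pair have a unique near neighbour (namely each other), so neither is distinguished by that criterion. To make the single-satellite encoding invertible you must \emph{break the symmetry of the pair}, for instance by choosing $h$ so that $\ima(h)\cap\ima(h)^{-1}=\varnothing$; then exactly one of $a^{-1}b$, $b^{-1}a$ lies in $\ima(h)$, which identifies the original. This requires an argument (e.g.\ a Borel selector from $\{g,g^{-1}\}$-pairs), and it is not automatic: if the set of involutions in a small ball around the identity is large, a single satellite cannot carry the needed asymmetry at all and you would have to enlarge the encoding. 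The paper sidesteps this entirely by building the asymmetry into the \emph{shape} of the encoded cluster (isolated root versus clustered non-root points), which works uniformly for every nondiscrete lcsc group. So your proposal is repairable but, as written, the crucial recovery step is not justified.
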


It should be easy to convince oneself that such a proposition will be true, although the details will necessarily be somewhat messy and ad hoc. We call the technique used \emph{local encoding}, which is illustrated in the following example:

\begin{figure}[h]\label{localencode}
\includegraphics[scale=0.5]{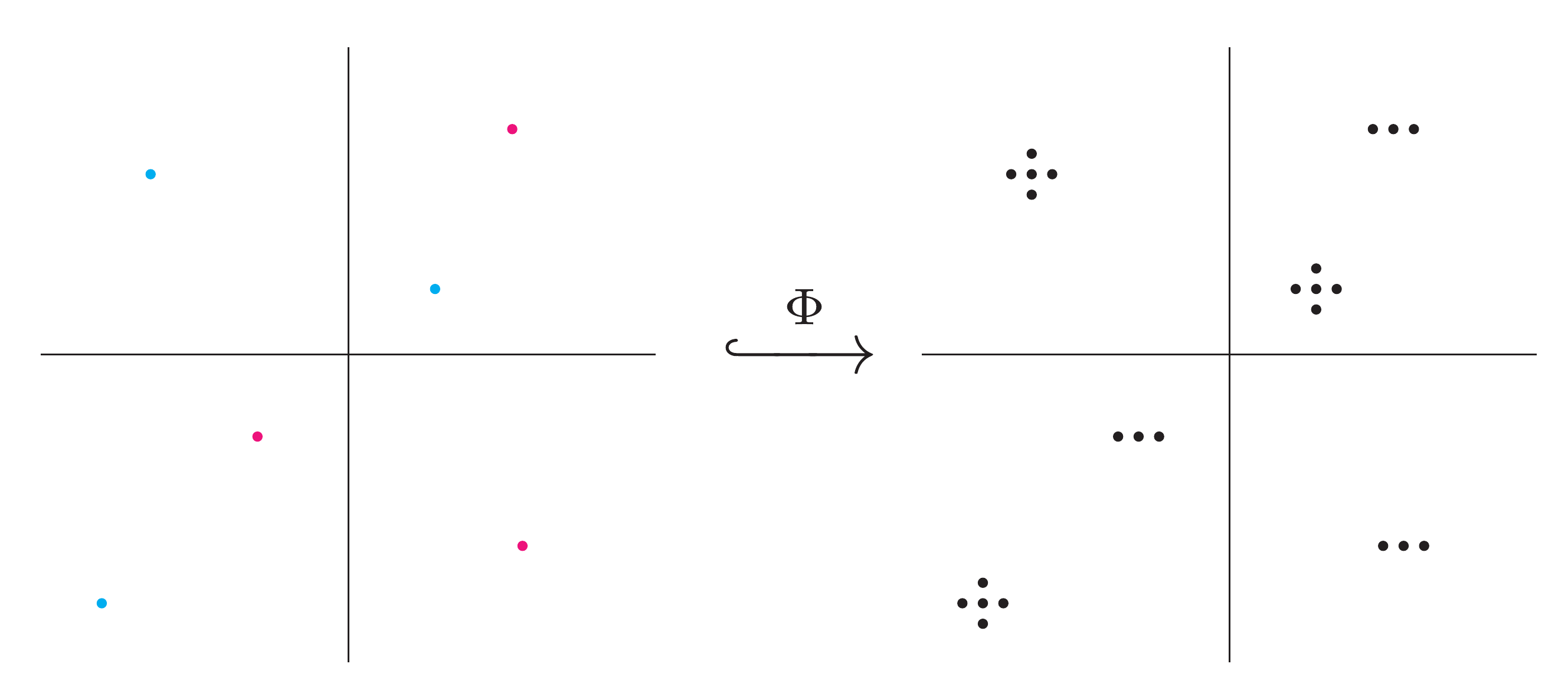}
\centering
\caption{Locally encoding labels of a point process.}
\end{figure}

This is a point process in $\RR^2$ labelled by the set $\{+, -\}$, which we have coloured as cyan and magenta respectively in the diagram. 

The map $\Phi : \{+, -\}^\MM \to \MM$ takes the input configuration, and adds a small decoration around each point. In this case we are literally encoding $+$ marks as a plus symbol centred at each point and similarly for $-$ marks. 

Barring some exceptional circumstances, you should be able to convince yourself that $\Phi$ is an injective map, and thus is an isomorphism onto its image for many input processes. 

The general case is achieved similarly. Note that the method here is necessarily ad hoc.

\begin{proof}[Proof of Proposition \ref{abstractlyisom}]

Suppose $\Pi$ is a free $\Xi$-marked point process with law $\mu$. We can assume that $\Pi$ is abstractly isomorphic to a uniformly separated process with a slightly different (but nevertheless standard Borel) mark space. One way to prove this is to simply appeal to Theorem \ref{crosssectionsexist} to choose a uniformly separated cross-section $Y \subset \Xi^\MM$, and then construct the equivariant injection
\[
    \mathscr{V} : (\Xi^\MM, \mu) \to Y^{\Xi^\MM}
\]
as discussed earlier (note also that we may identify $Y^{\Xi^\MM}$ with $(Y \times \Xi)^\MM$, so this is a standard Borel mark space). We then replace $\Pi$ by the isomorphic process $\mathscr{V}(\Pi)$.

Let $X$ denote the space:
\[
    X = \{ \omega \in \MMo(B(0, \delta/100)) \mid \omega \cap B(0, \delta/200) = \{0\}, \text{ and for all } x \in \omega \setminus \{0\}, \abs{B(x, \delta/200)} > 1 \}.
\]
This is a Borel subset of a standard Borel space, and hence standard Borel in its own right. One can readily see that it is uncountable, and hence there is a Borel isomorphism $I: \Xi \to X$. Define the following factor map:
\begin{align*}
    &\Psi : \Xi^\MM \to \MM \\
    &\Psi(\omega) = \bigcup_{x \in \omega} x I(\xi_x),
\end{align*}
where $\xi_x$ denotes the label of $x$ (that is, $(x, \xi_x) \in \omega$.

This is an injective map: we can recover the underlying set of any input configuration to $\Psi$ by identifying the points which are $\delta/200$-isolated. We can then uniquely recover their labels by applying the inverse of $I$ locally.
\end{proof}

\begin{proof}[Proof of Proposition \ref{representationtheorem}]
We have seen that a choice of cross-section gives us an isomorphic representation of the action as a marked point process. We then use Proposition \ref{abstractlyisom}.
\end{proof}

\begin{question}
Is there a more point process theoretic method to construct discrete cross-sections of free pmp actions?
\end{question}

\bibliographystyle{alpha} % We choose the "plain" reference style
\bibliography{refs} % Entries are in the "refs.bib" file

\end{document}